\documentclass[a4paper,11pt]{article}

\usepackage[T1]{fontenc}
\usepackage{lmodern}

\usepackage{dsfont}

\usepackage[latin1]{inputenc}
\usepackage{amsmath}
\usepackage{amsthm}
\usepackage{amssymb}
\usepackage{mathrsfs}
\usepackage{graphicx}
\usepackage[all]{xy}
\usepackage{hyperref}

\usepackage{makeidx}

\usepackage{stmaryrd}
\usepackage{caption}

\usepackage{abstract} 

\newtheorem{thm}{Theorem}[section]
\newtheorem{cor}[thm]{Corollary}
\newtheorem{claim}[thm]{Claim}
\newtheorem{fact}[thm]{Fact}

\newtheorem{lemma}[thm]{Lemma}
\newtheorem{prop}[thm]{Proposition}

\theoremstyle{definition}
\newtheorem{definition}[thm]{Definition}
\newtheorem{ex}[thm]{Example}

\newtheorem{question}[thm]{Question}

\def\rquotient#1#2{%
	\makeatletter
	\raise.3ex\hbox{$#1$}/\lower.3ex\hbox{$#2$}%
	\makeatother
}	

\makeatletter
\newcommand{\subjclass}[2][2010]{%
	\let\@oldtitle\@title%
	\gdef\@title{\@oldtitle\footnotetext{#1 \emph{Mathematics subject classification.} #2}}%
}
\newcommand{\keywords}[1]{%
	\let\@@oldtitle\@title%
	\gdef\@title{\@@oldtitle\footnotetext{\emph{Key words and phrases.} #1.}}%
}
\makeatother

\newcommand{\Address}{{
		\bigskip
		\small
		
\noindent\textsc{University of Montpellier\\ 
Institut Math\'ematiques Alexander Grothendieck\\
Place Eug\`ene Bataillon\\
34090 Montpellier (France)}\par\nopagebreak
\noindent\textit{E-mail address}: \texttt{anthony.genevois@umontpellier.fr}
		
}}

\makeindex

\title{A median degree from crossing graphs of median graphs}
\date{\today}
\author{Anthony Genevois}

\subjclass{Primary 05C75. Secondary 05C70, 05C12, 05C07.}
\keywords{Median graphs, Djokovi\'{c}-Winkler relation, crossing graph, simplex graph}

\begin{document}

\maketitle

\begin{abstract}
The \emph{crossing graph} $\mathrm{Cross}(M)$ of a median graph $M$ is defined as the graph whose vertices are the $\Theta$-classes of $M$ and whose edges connect two $\Theta$-classes whenever they cross. It is known that every graph $X$ can be realised as the crossing graph of some median graph. In this article, we initiate the study of the space $\mathrm{Cross}^{-1}(X)$ of all the median graphs with crossing graph $X$. First, we prove that two finite median graphs have isomorphic crossing graphs if and only if one can be obtained from the other by a sequence of elementary transformations we call \emph{slidings}.  Then, motivated by the fact that $\mathrm{Cross}^{-1}(X)$ always contains a single median graph of maximal degree, namely the simplex-graph of $X$, we introduce the \emph{median degree} of $X$ as the smallest possible degree of a median graph in $\mathrm{Cross}^{-1}(X)$. We compute the median degree for some families of graphs and characterise the graphs with maximal median degree. 
\end{abstract}

\tableofcontents

\section{Introduction}

\noindent
Given a connected graph $X$, two edges $\{a,b\}$ and $\{x,y\}$ are in \emph{$\Theta$-relation} whenever
$$d(a,x)+d(b,y) \neq d(a,y)+d(b,x).$$
The relation $\Theta$, also called the \emph{Djokovi\'{c}-Winkler relation}, originates from the study of \emph{partial cubes}, i.e.\ graphs that can be isometrically embedded into hypercubes \cite{MR314669,MR727925}. More generally, it turns out to be tightly connected to isometric embeddings into Cartesian products of graphs \cite{MR776391}. 

\medskip \noindent
\begin{minipage}{0.48\linewidth}
\includegraphics[width=0.95\linewidth]{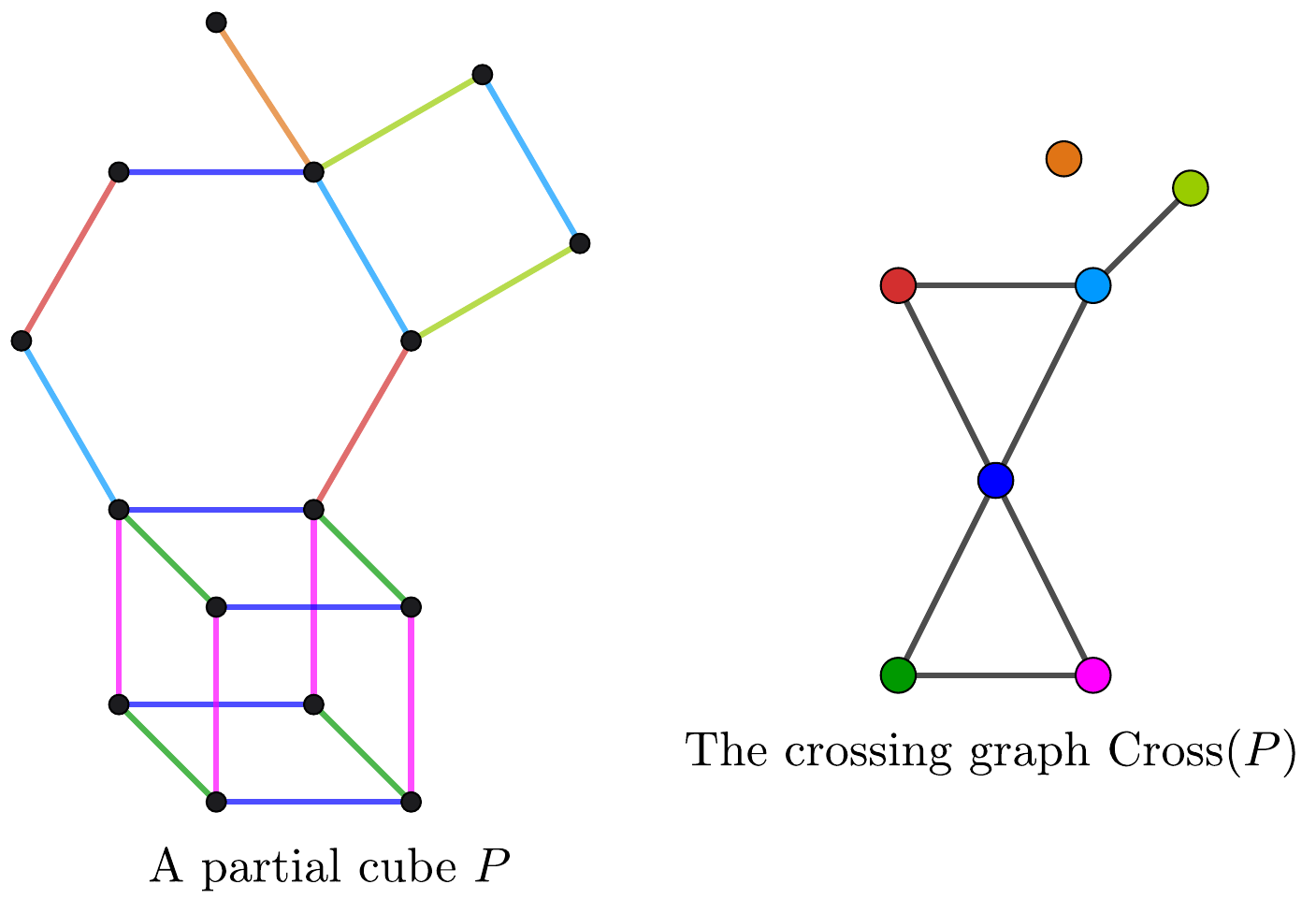}
\end{minipage}
\begin{minipage}{0.5\linewidth}
Motivated by the central role played by the relation $\Theta$ in the study of partial cubes, \cite{MR1920184} associates to every partial cube $P$ its \emph{crossing graph} $\mathrm{Cross}(P)$, namely the graph whose vertices are the $\Theta$-classes of edges of $P$ and whose edges connect two distinct $\Theta$-classes whenever they \emph{cross}, i.e.\ when they both intersect a common isometrically embedded cycle.  
\end{minipage}

\medskip \noindent
In this article, we focus on crossing graphs of median graphs. Recall that a connected graph $M$ is \emph{median} if, for all vertices $x_1,x_2,x_3 \in V(M)$, there exists a unique vertex $m \in V(M)$, referred to as the \emph{median point}, satisfying
$$d(x_i,x_j)= d(x_i,m)+d(m,x_j) \text{ for all } i \neq j.$$
Median graphs, such as products of trees and hypercubes, are notable examples of partial cubes. They play a remarkable role in the study of crossing graphs of partial cubes due to the following observation: every graph can be described as the crossing graph of some median graph \cite{MR1920184} (see also \cite{Roller,MR3217625}). This article is dedicated to some aspects of the following (vague) question:

\begin{question}
Given a graph $X$, what can be said about 
$$\mathrm{Cross}^{-1}(X) := \{ \text{median graphs $M$ satisfying } \mathrm{Cross}(M) \simeq X \}?$$
\end{question}

 \noindent
As just said, we know that the set $\mathrm{Cross}^{-1}(X)$ is never empty. But how big it is? How are the graphs it contains related? Are there natural representatives? When $X$ is an $n$-clique $K_n$, then it is not difficult to show that $\mathrm{Cross}^{-1}(X)= \{ n\text{-cube } Q_n\}$. However, when $X$ is given by $n$ isolated vertices, then $\mathrm{Cross}^{-1}(X)= \{ \text{trees with $n$ edges}\}$. So, in general, there may exist many median graphs with a given crossing graph. 

\medskip \noindent
As the first main result of this article, we state and prove a characterisation of finite median graphs having the same crossing graph. For this, we introduce a graph-theoretic transformation we call a \emph{sliding}. 

\begin{thm}\label{thm:IntroMain}
Two finite median graphs have the same crossing graph if and only if one can be obtained from the other by a sequence of slidings.
\end{thm}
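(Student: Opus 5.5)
The sliding has not yet been defined at this point, so I will work with the definition I expect: given a finite median graph $M$ and a hyperplane (that is, a $\Theta$-class) $J$, one cuts $M$ along $J$, giving the two halfspaces $J^-$ and $J^+$ glued along the carrier $N(J)\simeq C\times K_2$. One then reglues along a different convex subgraph $C'$ of the corresponding halfspace, chosen so that $C'$ is crossed by exactly the same hyperplanes as $C$. Since crossings are recorded by the hyperplanes that meet the carrier, the crossing graph does not change. This gives the easy direction. A sliding preserves the $\Theta$-classes, identified through the halfspace decomposition. The new graph is median because gluing two median graphs along isomorphic convex subgraphs yields a median graph. Two classes $H_1,H_2$ cross exactly when all four sectors $H_1^\pm\cap H_2^\pm$ are nonempty, and this is unchanged provided $C$ and $C'$ meet the same hyperplanes.

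The converse is the substantial direction. Given two finite median graphs $M_1$ and $M_2$ with $\mathrm{Cross}(M_1)\simeq\mathrm{Cross}(M_2)\simeq X$, I would connect both to a canonical representative, namely the simplex graph $\mathcal{S}(X)$. Its vertices are the cliques of $X$ (including the empty clique), and two cliques are adjacent when they differ by a single vertex. So it suffices to show that every finite median graph $M$ with crossing graph $X$ can be turned into $\mathcal{S}(X)$ by slidings. Slidings are reversible, since the inverse of a sliding is again a sliding, so the two sequences can then be concatenated.

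I would argue by induction on a complexity measure, for instance $|V(\mathcal{S}(X))|-|V(M)|\geq 0$. Via Sageev/Roller duality, $M$ corresponds to a consistent family of ultrafilters on the pocset of halfspaces, and the vertices of $\mathcal{S}(X)$ are maximal in number among these realisations. If $M\neq\mathcal{S}(X)$, there is a pair of hyperplanes $A$ and $B$ that cross and therefore span a square in $M$, together with a hyperplane $J$ crossing neither. I would look for a leaf-like hyperplane $J$, for example one whose halfspace $J^+$ is minimal, that is, contains no other hyperplane's halfspace. Sliding $J$ so that its carrier attaches at a vertex "closer to the region where the cliques containing $J$ live" should increase the number of vertices, or reduce the number of nested pairs of hyperplanes.

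A cleaner alternative measure is the number of pairs of disjoint, non-crossing hyperplanes that are nested. In $\mathcal{S}(X)$ every pair of non-crossing hyperplanes is "transverse" (the four-sector picture has all of its sectors except one nonempty) in a uniform way. Each sliding would move one hyperplane past another and decrease this count.

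The main obstacle is the key step: showing that whenever $M\neq\mathcal{S}(X)$, some sliding strictly decreases the chosen complexity. This requires choosing $C'$ convex with exactly the right set of crossing hyperplanes. I would try to take $C'$ to be the gate projection of $C$ onto a suitable halfspace of a hyperplane nested with $J$. Properties of gated sets in median graphs should ensure that $C'$ meets precisely the hyperplanes crossing $C$ that also cross that halfspace, which are all of them, because crossing $J$ forces them to cross the halfspace.
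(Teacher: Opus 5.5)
Your reduction is the paper's: slide every finite $M$ with $\mathrm{Cross}(M)\simeq X$ to $\mathrm{Simp}(X)$, and use that slidings are reversible. The inductive step, however, has a genuine gap, because neither complexity you propose can ever decrease.

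\emph{The vertex count is invariant.} A sliding replaces $M_1\ast_{Y_1=Y_2}M_2$ by $M_1\ast_{Y_1'=Y_2}M_2$ with $Y_1'\cong Y_1$, since parallel convex subgraphs are isomorphic via the gate-projection. So every sliding preserves $|V(M)|$, and no sliding can ``increase the number of vertices''. In fact $|V(M)|$ always equals the number of cliques of $\mathrm{Cross}(M)$: contract a $\Theta$-class $J$ and induct, using $|V(M)|=|V(M/J)|+|V(\partial_+J)|$. Hence $|V(\mathrm{Simp}(X))|-|V(M)|$ is identically $0$.

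\emph{Your second measure is also constant.} In any median graph, two distinct non-crossing $\Theta$-classes have exactly one empty sector and are nested. So that count is just the number of non-edges of $X$, and the ``transverse'' picture you attribute to $\mathrm{Simp}(X)$ holds in every median graph.

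\emph{Your proposed $C'$ is in general not parallel to $C$.} Take the gate-projection of $C$ onto a halfspace of a class $H$ disjoint from $J$. A class crossing $J$ can lie entirely on the other side of $H$, and then it does not cross that halfspace. This already happens in the three-square staircase, whose crossing graph is a path on four vertices. So ``crossing $J$ forces them to cross the halfspace'' is false.

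The missing idea is \emph{contact}. The paper first shows that a finite median graph is a simplex graph exactly when its $\Theta$-classes are pairwise in contact. By Helly there is then a vertex $x_0$ lying in all carriers, and $x\mapsto\{\Theta\text{-classes separating } x_0 \text{ and } x\}$ is an isomorphism onto $\mathrm{Simp}(\mathrm{Cross}(M))$.

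It then uses the complexity $\kappa(M)=\sum d(N(J_1),N(J_2))$. If $J_1,J_2$ are not in contact, some class $K$ separates their carriers. The move is the sliding along $\vec{K}$: reglue $K^-$ along the parallel fibre $\partial_+K$ instead of $\partial_-K$, which is legitimate because the two fibres of a single hyperplane are always parallel.

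Under this move, carrier distances for pairs on the same side of $K$ do not increase. Every minimal path between carriers separated by $K$ gets shortened by one, since the two endpoints of its $K$-edge become identified. So $\kappa$ strictly drops, and it vanishes exactly at $\mathrm{Simp}(X)$.
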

\begin{figure}
\begin{center}
\includegraphics[width=0.7\linewidth]{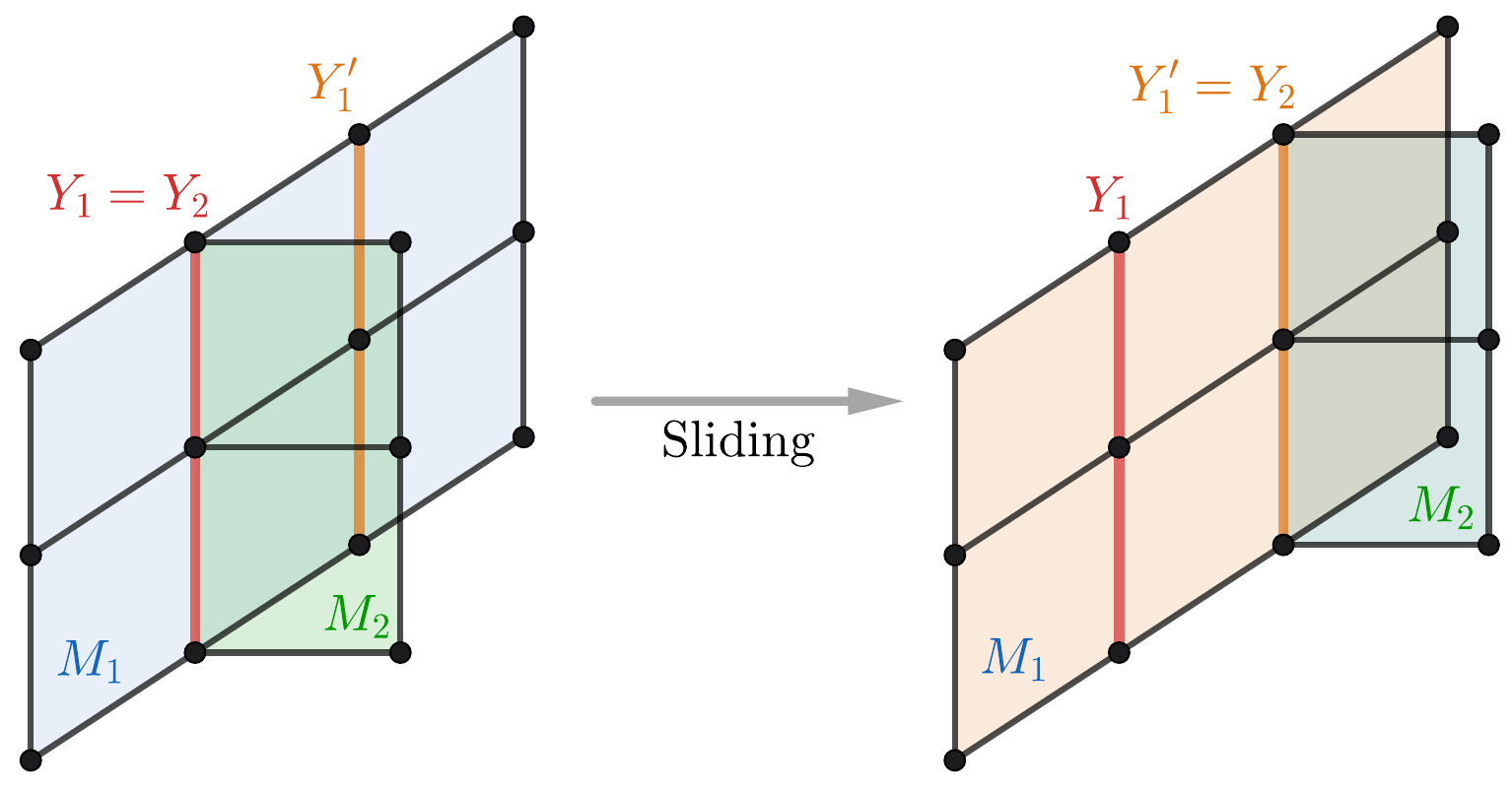}
\caption{Sliding in a median graph.}
\label{SlidingIntro}
\end{center}
\end{figure}

\noindent
In a nutshell, a sliding will transform a median graph $M$ decomposing into an amalgam $A \ast_{C_1 = C_2} B$ into a new amalgam $A \ast_{C_1'=C_2}B$ by ``sliding'' $B$ along $A$ from $C_1$ to $C_1'$, where, somehow, the subgraphs $C_1$ and $C_1'$ are ``parallel''. See Figure~\ref{SlidingIntro} for a concrete example and Section~\ref{section:Sliding} for a precise definition. 

\medskip \noindent
Then, we turn to the following question: given a graph $X$, is there a simplest representative in $\mathrm{Cross}^{-1}(X)$? In view of the example given below, illustrating two median graphs $M_1$ and $M_2$ with the same crossing graph, it seems that there may not be a reasonable choice, no obvious reason to prefer some representatives than the others.
\begin{center}
\includegraphics[width=\linewidth]{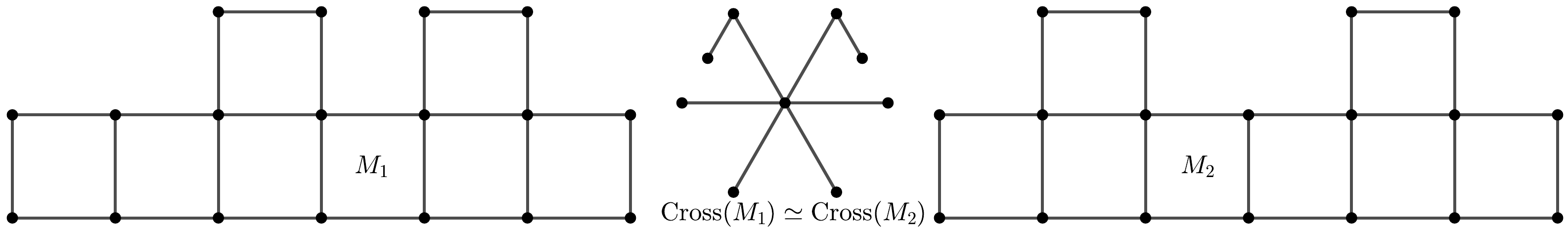}
\end{center}

\medskip \noindent
On the other hand, in some sense, there is a most complicated representative. As shown in \cite{MR1920184}, given an arbitrary graph $X$, the \emph{simplex-graph} $\mathrm{Simp}(X)$ turns out to be a median graph whose crossing is isomorphic to $X$. Here, $\mathrm{Simp}(X)$ refers to the graph whose vertices are the (possibly empty) cliques of $X$ and whose edges connect two cliques whenever one can be obtained from the other by adding or removing a single vertex. It is not difficult to show that the simplex-graph $\mathrm{Simp}(X)$ is the unique median graph in $\mathrm{Cross}^{-1}(X)$ that has maximal degree, namely $|V(X)|$ (see Proposition~\ref{prop:MaxDegree}). This observation motivates us to consider the degree as a measure of the complexity of a representative in $\mathrm{Cross}^{-1}(X)$.

\begin{definition}
The \emph{median degree} $\mathrm{mdeg}(X)$ of a graph $X$ is the smallest degree of a median graph $M$ whose crossing graph $\mathrm{Cross}(M)$ is isomorphic to $X$. 
\end{definition}

 \noindent
Considering crossing graphs of median graphs with bounded degree is also motivated by \cite{MR4057355,MR4862336}, in which we proved that the crossing graph of a median graph with no cut-vertex and of degree $\leq N$ is always a quasi-tree with a hyperbolicity constant that depends only on $N$. 

\medskip \noindent
In Section~\ref{section:Example}, we record some computations of median degrees for explicit examples of graphs such as complete graphs and cycles (Corollary~\ref{cor:CliqueGirth}), grids (Proposition~\ref{prop:Grid}), and bipartite complete graphs (Corollary~\ref{cor:BiComplete}). 

\medskip \noindent
As the second main result of this article, we characterise the graphs with maximal median degree:

\begin{thm}\label{thm:IntroMain}
Let $X$ be a finite graph. The following assertions are equivalent:
\begin{itemize}
	\item the equality $\mathrm{mdeg}(X)= |V(X)|$ holds;
	\item there exists a unique median graph whose crossing graph is $X$, namely $\mathrm{Simp}(X)$;
	\item  $X$ cannot be separated by the star of one of its vertices. 
\end{itemize}
\end{thm}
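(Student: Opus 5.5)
We will prove the cycle of implications (iii)$\Rightarrow$(ii)$\Rightarrow$(i)$\Rightarrow$(iii). Here ``$X$ is separated by the star of a vertex $v$'' means that removing the closed neighbourhood $\mathrm{st}(v)=\{v\}\cup N(v)$ leaves a disconnected graph. In the degenerate case where $X \setminus \mathrm{st}(v)$ is empty or connected, no separation occurs.

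\emph{(ii)$\Rightarrow$(i).} By Proposition~\ref{prop:MaxDegree}, the graph $\mathrm{Simp}(X)$ has degree $|V(X)|$. If it is the only element of $\mathrm{Cross}^{-1}(X)$, the median degree equals $|V(X)|$.

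\emph{(i)$\Rightarrow$(iii), by contraposition.} Suppose $X\setminus \mathrm{st}(v)$ splits as $A\sqcup B$ with both parts nonempty and no edges between them. We build a median graph $M$ with $\mathrm{Cross}(M)\simeq X$ in which no vertex has degree $|V(X)|$.

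Set $X_A = X[\mathrm{st}(v)\cup A]$ and $X_B = X[\mathrm{st}(v)\cup B]$. In each of $\mathrm{Simp}(X_A)$ and $\mathrm{Simp}(X_B)$, the hyperplane dual to $v$ has a carrier, and that carrier is isomorphic to $\mathrm{Simp}(N(v))\times K_2$. We glue the two simplex graphs along these carriers, identifying the copies of $\mathrm{Simp}(\mathrm{st}(v))$ and of the halfspace side containing $\{v\}$.

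More concretely, consider the convex subgraph $\{\text{cliques containing } v\}$, which is isomorphic to $\mathrm{Simp}(N(v))$, inside both $\mathrm{Simp}(X_A)$ and $\mathrm{Simp}(X_B)$. Let $M$ be the amalgam of $\mathrm{Simp}(X_A)$ and $\mathrm{Simp}(X_B)$ along this gated subgraph. Amalgams of median graphs along gated (convex) subgraphs are median. In the amalgam, the $\Theta$-classes of $N(v)\cup\{v\}$ get identified. Vertices of $A$ and of $B$ never cross, because no square can contain edges from both sides. Vertices in $A\cup \mathrm{st}(v)$ cross exactly as in $X_A$, and similarly on the $B$ side. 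Hence $\mathrm{Cross}(M)\simeq X$.

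It remains to bound degrees. A vertex of $M$ lying in the $A$-side only has degree at most $|\mathrm{st}(v)\cup A| < |V(X)|$, and likewise for the $B$-side. A vertex in the glued part is a clique $C \ni v$. Its neighbours in $M$ correspond to adding a vertex of $N(v)\cap N(C)$ on either side, removing a vertex of $C$, or adding vertices of $A\cup B$ adjacent to all of $C$. None of these involves adding $v$. Adding a vertex $a\in A$ is only possible if $a$ is adjacent to $v$, which fails since $A$ lies outside $\mathrm{st}(v)$. So a glued vertex has degree at most $|N(v)|+1 < |V(X)|$, using that $A$ is nonempty.

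The delicate point is making sure that some element of $\mathrm{Cross}^{-1}(X)$ attains degree below $|V(X)|$, and not merely that it differs from $\mathrm{Simp}(X)$. This is where Proposition~\ref{prop:MaxDegree} helps: it states that $\mathrm{Simp}(X)$ is the \emph{unique} median graph of degree $|V(X)|$. Since $M\neq \mathrm{Simp}(X)$ (for instance, $M$ has more vertices), $M$ has degree strictly less than $|V(X)|$, and we obtain $\mathrm{mdeg}(X)<|V(X)|$.

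\emph{(iii)$\Rightarrow$(ii).} This is the main obstacle, and we expect it to need the sliding machinery. Let $M\in \mathrm{Cross}^{-1}(X)$ be finite. Pick a vertex of $M$ that maximises the number of incident $\Theta$-classes. If this vertex is not adjacent to all hyperplanes, there is a hyperplane $J$ separating it from some other hyperplane.

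The plan is to exploit the structure of a median graph that is not a simplex graph. We want a hyperplane $H$, with label $v$, such that some halfspace of $H$ contains two hyperplanes that are each disjoint from $H$ and that are separated by $H$ or by a hyperplane crossing $H$. Concretely, take a vertex $x$ and a hyperplane $K$ not adjacent to $x$. On a geodesic from $x$ to $N(K)$, consider the last hyperplane $H$ crossed before reaching $K$, chosen so that it does not cross $K$. Then $H$ separates $K$ from the hyperplanes adjacent to $x$ that do not cross $H$. Using the standard fact that two hyperplanes in distinct components of $M\setminus N(H)$ which both avoid $H$ cannot cross, we deduce that the hyperplanes not crossing $H$ fall into at least two families with no crossings between them. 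In other words, $\mathrm{st}(v)$ separates $X$.

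The delicate part is guaranteeing that both sides are nonempty, that is, that the other side of $H$ contains a hyperplane disjoint from $H$. We would ensure this by choosing $x$ to maximise degree and applying an extremal argument. Failing that, we would fall back on the sliding characterisation (Theorem~\ref{thm:IntroMain}): if $\mathrm{Cross}^{-1}(X)$ contains a graph other than $\mathrm{Simp}(X)$, then some nontrivial sliding exists, and the decomposition $A\ast_C B$ underlying that sliding exhibits a hyperplane whose star separates $X$.
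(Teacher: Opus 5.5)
\textbf{Gap 1: the construction for (i)$\Rightarrow$(iii) does not have crossing graph $X$.} Your (ii)$\Rightarrow$(i) is fine. Using the uniqueness part of Proposition~\ref{prop:MaxDegree} is a valid shortcut: it reduces (i)$\Rightarrow$(iii) to producing one $M\in\mathrm{Cross}^{-1}(X)$ with $M\not\simeq\mathrm{Simp}(X)$, with no degree count needed. But your $M$ is not in $\mathrm{Cross}^{-1}(X)$. The subgraph $C=\{S \mid v\in S\}$ is a fibre of $J_v$, so it is crossed only by the classes $J_w$ with $w\in N(v)$. Hence the two copies of $J_v$ coming from $\mathrm{Simp}(X_A)$ and $\mathrm{Simp}(X_B)$ are \emph{not} identified. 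Concretely, the vertex $\{v\}$ of $M$ has two distinct neighbours $\emptyset_A\neq\emptyset_B$, and by Lemma~\ref{lem:InterEdge} the two corresponding edges lie in different $\Theta$-classes. So $\mathrm{Cross}(M)$ is $X$ with $v$ replaced by two non-adjacent twins, and it has $|V(X)|+1$ vertices. Your remark that $M$ has more vertices than $\mathrm{Simp}(X)$ is exactly the symptom: by Theorem~\ref{thm:SameCrossing}, the finite members of $\mathrm{Cross}^{-1}(X)$ are related by slidings, and slidings preserve the number of vertices. Gluing along the full carriers by the identity does not help either; it just returns $\mathrm{Simp}(X)$.

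The repair is to glue along the carriers of $J_v$ via the flip $S\mapsto S\triangle\{v\}$. Equivalently, glue $\mathrm{Simp}(X[N(v)\cup B])$, along its convex subgraph $\mathrm{Simp}(N(v))$, onto the fibre $\{S\mid v\in S\}$ of $\mathrm{Simp}(X_A)$ via $T\mapsto T\cup\{v\}$. Up to replacing $\partial B$ by $N(v)$, this is the sliding the paper performs on $\mathrm{Simp}(X)$. Its crossing graph is $X$. Moreover, for $a\in A$ and $b\in B$ the classes $J_a,J_b$ are not in contact, so $M\not\simeq\mathrm{Simp}(X)$ by Lemma~\ref{lem:PairwiseContact}.

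\textbf{Gap 2: (iii)$\Rightarrow$(ii) is unfinished.} The difficulty you flag is real. Take $x$ of maximal degree and $H$ the last class on a geodesic from $x$ to $N(K)$. There need be no class on the side of $x$ other than $H$ and classes crossing $H$. For instance, let $M$ be the domino $K_2\times P_2$ with a pendant edge $K$ at a corner $c$. Let $x$ be the degree-$3$ vertex at distance $2$ from $c$, and take a geodesic from $x$ to $c$ whose last edge lies in the $K_2$-class $H$. Then the side of $H$ containing $x$ is a fibre, and $X\setminus\mathrm{star}(H)=\{K\}$ is connected. Neither your extremal argument nor your sliding fallback is actually carried out.

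The missing idea is to start from two $\Theta$-classes rather than from a vertex and a class. If $M\in\mathrm{Cross}^{-1}(X)$ is not $\mathrm{Simp}(X)$, Lemma~\ref{lem:PairwiseContact} gives classes $J_1,J_2$ not in contact. Corollary~\ref{cor:SeparatingCarriers} then gives a class $H$ separating $N(J_1)$ from $N(J_2)$. Now $J_1$ and $J_2$ themselves lie outside $\mathrm{star}(H)$ on opposite sides of $H$, so both sides are automatically nonempty. Your observation that classes on opposite sides of $H$ cannot cross then shows that every path from $J_1$ to $J_2$ in $X$ meets $\mathrm{star}(H)$. This is the paper's argument, and no maximal-degree vertex is needed.
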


\noindent
In Section~\ref{section:Questions}, we conclude the article by leaving a few open questions related to median graphs and their crossing graphs.

\section{Preliminaries on median graphs}

\noindent
A \emph{graph} $X$ is the data of a set of \emph{vertices} $V(X)$ and a set of \emph{edges} $E(X) \subseteq \{ \{x,y \} \in x,y \in V(X) \text{ distinct} \}$. A \emph{map} $\beta : X \to Y$ between two graphs $X$ and $Y$ is the data of two maps $V(X) \to V(Y)$ and $E(X) \to E(Y)$, which we also denote by $\beta$ for simplicity. The map $\beta$ is a \emph{graph-morphism} if $\{\beta(x),\beta(y)\} \in E(Y)$ for every edge $\{x,y \} \in E(X)$, and a \emph{graph-isomorphism} if in addition the induced maps $V(X) \to V(Y)$ and $E(X) \to E(Y)$ are bijective. 

\medskip \noindent
Given a graph $X$, two vertices $x,y \in V(X)$ are \emph{adjacent} if $\{x,y\} \in E(X)$. One also says that $y$ is a \emph{neighbour} of $x$. The \emph{star} of our vertex $x$ is the subgraph induced by $x$ and its neighbours. The \emph{degree} of a vertex is its (possibly infinite) number of neighbours. The \emph{degree} of a graph is the (possibly infinite) supremum of the degrees of its vertices. 

\medskip \noindent
A graph is \emph{connected} if, for all vertices $x,y \in V(X)$, there exists a sequence $v_0:=x,v_1, \ldots, v_{n-1},v_n:=y$ such that $\{v_i,v_{i+1}\} \in E(X)$ for every $0 \leq i \leq n-1$. The smallest integer $n$ we can choose is the \emph{distance} between $x$ and $y$. This allows us to endow every connected graph with a metric. A path of minimal length is a \emph{geodesic}. A subgraph containing all the geodesics between its vertices is \emph{convex}. 

\medskip \noindent
A connected graph $M$ is \emph{median} if, for all vertices $x_1,x_2,x_3 \in V(M)$, there exists a unique vertex $m \in V(M)$, referred to as the \emph{median}, satisfying 
$$d(x_i,x_j)=d(x_i,m)+d(m,x_j) \text{ for all distinct } 1 \leq i,j \leq 3.$$
Examples of median graphs include products of trees, including (hyper)cubes. Recall that, given a set $S$, the \emph{(hyper)cube} $Q(S)$ is the graph whose vertices are the finite subsets of $S$ and whose edges connect two subsets whenever one is obtained from the other by adding or removing a single point. Given a median graph $M$, the largest cardinality of a set $S$ for which $M$ contains an induced subgraph isomorphic to $Q(S)$ is the \emph{cubical dimension} of $M$. Notice that median graphs of cubical dimension $1$ coincide with trees.

\medskip \noindent
Cubes play a central role in the structure of median graphs. We record for future two properties satisfied by any median graph $M$:
\begin{description}
	\item[($3$-cube condition)] For every vertex $v \in V(M)$ and all neighbours $a,b,c \in V(M)$, if the edges $\{v,a\}$, $\{v,b\}$, $\{v,c\}$ pairwise span a square, then they globally span a $3$-cube.
	\item[(cube condition)] For every vertex $v \in V(M)$ and all neighbours $a_1, \ldots, a_k \in V(M)$, if the edges $\{v,a_1\}, \ldots, \{v,a_k\}$ pairwise span a square, then they globally span a $k$-cube.
\end{description}
Here, a \emph{square} refers to an induced $4$-cycle. See for instance \cite{MR1748966} for more details.

\medskip \noindent
In median graphs, convex subgraphs are \emph{gated}. Recall that a subgraph $Y \leq X$ is \emph{gated} if, for every vertex $x \in V(X)$, there exists a vertex $p \in V(Y)$ such that $d(x,y)=d(x,p)+d(p,y)$ for every $y \in V(y)$. The vertex $p$ is referred to as the \emph{gate} or the \emph{gate-projection} of $x$. 

\medskip \noindent
Given a connected graph $X$, two edges $\{a,b\}$ and $\{x,y\}$ are in \emph{$\Theta$-relation} whenever
$$d(a,x)+d(b,y) \neq d(a,y)+d(b,x).$$
For median graphs, the relation $\Theta$ coincides with the reflexive-transitive closure of the relation that identifies two edges whenevery they are opposite in a square \cite{MR1210081}. Two distinct $\Theta$-classes \emph{cross} whenever they both intersect a square, or equivalently, when they contain two representatives that span a square. Two distinct $\Theta$-classes are \emph{in contact} if they contain two representatives that share an endpoint. (Here, we borrow the terminology from \cite{MR3217625}.)  

\medskip \noindent
The next statement summarises the central properties satisfied by $\Theta$-classes in median graphs. All the assertions are fairly standard. We refer the reader for instance to \cite{MR1788124} for more details. 

\begin{thm}\label{thm:BigHyp}
Let $M$ be a median graph. The following assertions are satisfied:
\begin{itemize}
	\item Every $\Theta$-class separates $M$ into two pieces, i.e.\ the graph $M \backslash \backslash J$ obtained from $M$ by removing the edges from $J$ has exactly two connected components, called \emph{halfspaces}. Halfspaces are convex.
	\item The \emph{carrier} of a $\Theta$-class $J$ is the subgraph of $M$ induced by the edges of $J$. A \emph{fibre} of $J$ is a connected component of $\partial J:= N(J) \backslash \backslash J$. Carriers and fibres are convex. 
product structure  and convexity of carriers.
	\item  For every $\Theta$-class $J$, every edge $e \in J$, and every fibre $F$ of $J$, the map $$x \mapsto (\text{gate-projection of $x$ to $e$}, \text{gate-projection of $x$ to $F$})$$ induces a graph-isomorphism $N(J) \to e \times F$. 
	\item A path in $M$ has minimal length if and only if it crosses every $\Theta$-class at most once. Consequently, the distance between two vertices coincides with the number of $\Theta$-classes separating them.
\end{itemize}
\end{thm}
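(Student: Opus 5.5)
The statement is explicitly presented as a summary of standard facts, so I would not reprove everything from scratch. Instead I would derive the four items from the characterisation of $\Theta$ quoted just before the statement: in a median graph, $\Theta$ is the reflexive-transitive closure of ``being opposite in a square''. I would also use the gatedness of convex subgraphs. The order I would follow is: separation and halfspaces first, then the distance formula, then convexity of carriers and fibres, and finally the product decomposition of $N(J)$.

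\textbf{Halfspaces.} Fix an edge $e=\{a,b\}\in J$ and set $W_a=\{x : d(x,a)<d(x,b)\}$ and $W_b=\{x : d(x,b)<d(x,a)\}$. Because $M$ is bipartite, these two sets partition $V(M)$.
\begin{itemize}
\item An edge $\{x,y\}$ has its endpoints in different sets exactly when it is in $\Theta$-relation with $e$; this is immediate from the defining inequality together with parity.
\item Each $W_a$ is convex. If it were not, take a geodesic that leaves $W_a$ and returns, and use medians of $a$, $b$ and points on that geodesic to produce a contradiction with uniqueness of medians.
\item Each $W_a$ is connected, since it is convex.
\end{itemize}
Together these show that $M\backslash\backslash J$ has exactly two components, $W_a$ and $W_b$, and that they are convex.

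\textbf{Geodesics and distance.} A geodesic cannot cross the same $\Theta$-class twice: leaving $W_a$ and returning would contradict the convexity of $W_a$. Conversely, suppose a path crosses each class at most once. Then its length equals the number of classes separating its endpoints, and every path between those endpoints must cross each of these classes. Hence the path is a geodesic, and $d(x,y)$ equals the number of separating $\Theta$-classes.

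\textbf{Carriers, fibres and the product structure.} First I would show that $\partial J\cap W_a$ is convex, as the gate set of $W_b$ inside $W_a$. A geodesic between two of its points crosses no class that crosses $J$ in a way that leaves the boundary. More concretely, at each vertex of the geodesic one propagates $J$-edges along squares, using the 3-cube condition to show the whole geodesic stays in $\partial J$. This argument also shows that $\partial J\cap W_a$ is connected, so the fibres are exactly $F_a=\partial J\cap W_a$ and $F_b=\partial J\cap W_b$. The map $x\mapsto$ (the unique $J$-edge at $x$, followed to its other endpoint) is then an isomorphism $F_a\to F_b$, again by propagating along squares. It follows that $N(J)\cong e\times F$, with the map given by the two gate projections, and convexity of $N(J)$ follows from the convexity of the fibres together with the halfspace structure.

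\textbf{Main obstacle.} I expect the hardest step to be the convexity of $\partial J\cap W_a$, since that is where the cube and 3-cube conditions are genuinely needed. Given that difficulty and the standard nature of the results, I would likely just cite \cite{MR1788124} for the details.
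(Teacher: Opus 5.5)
The paper does not prove this statement. It records these facts as standard and refers to \cite{MR1788124}, so your outline (the sets $W_a,W_b$, geodesics crossing each $\Theta$-class once, the fibres as $\partial J\cap W_a$ and $\partial J\cap W_b$, and the $J$-matching giving $N(J)\cong e\times F$) followed by the same citation is essentially the paper's approach, and the steps you sketch are the standard correct ones. One adjustment: the convexity of $\partial J\cap W_a$ needs no $3$-cube condition, and the median argument of Lemma~\ref{lem:InterEdge} already propagates the $J$-edge along the whole geodesic. Precisely, suppose $w,w'$ are consecutive vertices of a geodesic from $u$ to $u'$ inside $W_a$, and $w$ carries a $J$-edge $\{w,v\}$. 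Then the $J$-neighbour $v'$ of $u'$ lies in $W_b$, on the same side as $w'$ of the $\Theta$-class of $\{w,w'\}$, so the median of $w',v,v'$ is a common neighbour of $w'$ and $v$ lying in $W_b$.
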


\noindent
In the rest of the section, we collect a few preliminary results about median graphs that will be useful in the sequel. 

\begin{lemma}\label{lem:InterEdge}
Let $M$ be a median graph and $a,b \in E(M)$ two edges with a common endpoints. The following assertions hold.
\begin{itemize}
	\item The edges $a$ and $b$ belong to the same $\Theta$-class if and only if they coincide. 
	\item The edges $a$ and $b$ span a square in $M$ if and only if they belong to crossing $\Theta$-classes. 
\end{itemize}
\end{lemma}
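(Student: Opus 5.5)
We prove the two assertions of Lemma~\ref{lem:InterEdge} separately, using only Theorem~\ref{thm:BigHyp} and the definition of $\Theta$. Throughout, write $a=\{v,x\}$ and $b=\{v,y\}$ with common endpoint $v$.

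For the first assertion, suppose $a$ and $b$ are distinct edges in the same $\Theta$-class $J$. Then $x \neq y$. The path $x,v,y$ crosses $J$ twice, so by the last item of Theorem~\ref{thm:BigHyp} it is not a geodesic, and hence $d(x,y)\leq 1$. We can also see this via halfspaces: $v$ lies in one halfspace of $J$, while $x$ and $y$ both lie in the other, so $d(x,y)$ equals the number of $\Theta$-classes separating $x$ from $y$. Now $d(x,y)\neq 1$, since otherwise $x,v,y$ would form a triangle, and median graphs are bipartite: the endpoints of any edge are separated by exactly one class, which gives a parity count. So $d(x,y)=0$, i.e.\ $x=y$, a contradiction. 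Alternatively, apply the defining inequality of $\Theta$ directly: with $a=\{v,x\}$ and $b=\{v,y\}$ we get $d(v,v)+d(x,y)=d(x,y)$ and $d(v,y)+d(x,v)=2$, so $\Theta$ forces $d(x,y)\neq 2$, and the bipartite argument above gives $d(x,y)\in\{0,2\}$. Hence $a=b$. The converse direction is trivial.

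For the second assertion, the forward direction is immediate. If $a$ and $b$ span a square, then $a$ and $b$ are non-opposite edges of that square. By the first assertion their classes are distinct, and they intersect a common square, so the classes cross by definition.

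The converse is the main step. Suppose the class $J_a$ of $a$ crosses the class $J_b$ of $b$. By the first assertion, $J_a\neq J_b$. Consider the carrier $N(J_a)$ and the product isomorphism $N(J_a)\simeq a\times F$ from Theorem~\ref{thm:BigHyp}, where $F$ is the fibre containing $v$. The key claim is that $b\subset N(J_a)$, i.e.\ that $y\in N(J_a)$. Granting this, $b$ maps to an edge of $\{v\}\times F$, and the product structure yields a square with sides $a$ and $b$.

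To prove $y\in N(J_a)$, we use the crossing. Since $J_a$ and $J_b$ cross, there are vertices in all four intersections of their halfspaces. Let $y'$ be the neighbour of $x$ across $J_b$ in the correct direction, found by taking a median-based argument. Concretely, take the median $m$ of $x$, $y$ and a vertex $z$ lying in the quadrant on the same side of $J_a$ as $x$ and on the same side of $J_b$ as $y$. A distance count shows that $m$ is adjacent to both $x$ and $y$. Then $v,x,m,y$ is a $4$-cycle, and we need it to be induced, which holds by bipartiteness. The delicate point is the distance computation for this median: we must check that $d(x,m)=d(y,m)=1$ using the counting of separating $\Theta$-classes, where $J_a$ separates $y$ from $x$ and $z$, and $J_b$ separates $x$ from $y$ and $z$. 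I expect this bookkeeping to be the only real obstacle.
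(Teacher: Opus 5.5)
Your proposal is correct and follows the paper's proof. For the first item, a path $x,v,y$ through two distinct edges of one $\Theta$-class is not a geodesic, which contradicts bipartiteness. For the converse of the second item, the paper also takes the median $m$ of $x$, $y$ and a vertex $z$ in the quadrant $A\cap B$; the bookkeeping you defer is simply convexity of halfspaces, which puts $m\in A\cap B$, so $m\neq x,y,v$, and $d(x,y)=2$ then forces $m$ to be a common neighbour. The detour through the product structure of $N(J_a)$ is unnecessary, since the median already produces the square.
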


\begin{proof}
Let $\{u,v\},\{v,w\} \in E(M)$ be two edges with a common endpoint $v$. If $u \neq w$, then, since $M$ is bipartite, $u,v,w$ must be a geodesic. Then, Theorem~\ref{thm:BigHyp} implies that the $\Theta$-classes of $\{u,v\}$ and $\{v,w\}$ are distinct. This proves the first assertion of our lemma.

\medskip \noindent
Assume that the $\Theta$-classes of $\{u,v\}$ and $\{v,w\}$ cross. Let $A$ (resp.\ $B$) denote the halfspace delimited by the $\Theta$-class of $\{u,v\}$ (resp.\ $\{v,w\}$) that does not contain $v$. Since our $\Theta$-classes cross, we know that $A \cap B \neq \emptyset$. Fix a vertex $z$ in this intersection and let $m$ denote the median point of $\{u,w,m\}$. By convexity of halfspaces (as given by Theorem~\ref{thm:BigHyp}), $m$ also belongs to $A \cap B$, hence $d(m,u),d(m,w) \geq 1$. Since $d(u,w)=2$, necessarily $m$ is a common neighbour of $u$ and $w$. In other words, $\{v,u\}$ and $\{v,w\}$ span a square in $M$, proving the second assertion of our lemma. 
\end{proof}

\begin{cor}\label{cor:ContactDegree}
Let $M$ be a median graph. If $M$ contains $d$ pairwise in contact $\Theta$-classes, then $\mathrm{deg}(M) \geq d$. 
\end{cor}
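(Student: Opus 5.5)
We need to show: if $J_1,\ldots,J_d$ are pairwise in contact $\Theta$-classes, then some vertex of $M$ has degree at least $d$. The natural plan is to find one vertex incident to an edge from each $J_i$. By the first assertion of Lemma~\ref{lem:InterEdge}, distinct edges at a common vertex lie in distinct $\Theta$-classes, so these $d$ edges are distinct and that vertex has degree at least $d$. The case $d$ infinite reduces to arbitrarily large finite subfamilies, so we may assume $d$ finite.

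The key step is a Helly-type argument on carriers. Let $N(J_i)$ be the carrier of $J_i$, which is convex by Theorem~\ref{thm:BigHyp}. Two classes $J_i,J_j$ being in contact means they have representatives sharing an endpoint, so $N(J_i)\cap N(J_j)\neq\emptyset$. Convex subgraphs of a median graph satisfy the Helly property: finitely many pairwise intersecting convex subgraphs have a common vertex. If I want to be self-contained, I would prove this by induction using medians. For three pairwise intersecting convex sets, choose $x_{ij}\in C_i\cap C_j$. The median of $x_{12},x_{13},x_{23}$ lies on geodesics between any two of these points, and so belongs to all three sets by convexity. The general case then follows by the standard induction: replace $C_{d-1},C_d$ by $C_{d-1}\cap C_d$, which is convex and meets every other $C_i$ by the three-set case.

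Given a vertex $v\in\bigcap_i N(J_i)$, I would then check that $v$ is an endpoint of an edge of each $J_i$. This is the step that needs care, since the carrier is defined as the subgraph induced by the edges of $J_i$. I take its vertices to be exactly the endpoints of edges of $J_i$, so every vertex of $N(J_i)$ is incident to an edge of $J_i$. Alternatively, this follows from the product structure $N(J_i)\simeq e\times F$ in Theorem~\ref{thm:BigHyp}: a vertex $(p,f)$ is incident to the edge $e\times\{f\}$, which lies in $J_i$.

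Choosing such an edge $e_i\ni v$ in $J_i$ for each $i$ gives $d$ edges at $v$ in pairwise distinct classes, hence pairwise distinct edges. So $\deg(v)\ge d$. The only genuine obstacle is the Helly property; everything else is direct from the earlier results.
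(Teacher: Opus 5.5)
Your proof is correct and follows the paper's argument. Pairwise contact makes the convex carriers pairwise intersect, the Helly property yields a common vertex, and that vertex is incident to an edge from each of the $d$ distinct $\Theta$-classes, giving degree at least $d$. Your median-based proof of the Helly property and your reduction of infinite $d$ to finite subfamilies are sound additions to what the paper simply cites.
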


\begin{proof}
Let $J_1, \ldots, J_d$ be $d$ pairwise in contact $\Theta$-classes. We know that the carriers $N(J_1), \ldots, N(J_d)$ pairwise intersect. Due to the convexity of carriers and to the Helly property satisfied by convex subgraphs in median graphs, the global intersection $N(J_1) \cap \cdots \cap N(J_d)$ must be non-empty. In other words, there exists a vertex $x \in V(M)$ that belongs to an edge of $J_k$ for every $1 \leq k \leq d$. We deduce from Lemma~\ref{lem:InterEdge} that these $d$ edges are pairwise distinct, hence $\mathrm{deg}(M) \geq \mathrm{deg}(x) \geq d$, as desired. 
\end{proof}

\noindent
The following separation property can be found in \cite{MR721770}:

\begin{prop}\label{prop:MedianSepProp}
Let $X$ be a median graph and $A,B \leq X$ two convex subgraphs. If $A \cap B = \emptyset$, then there exists a $\Theta$-class separating $A$ and $B$.
\end{prop}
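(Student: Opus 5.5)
Since $A$ and $B$ are convex, hence gated by the preliminaries, I would build a shortest bridge between them and take a $\Theta$-class on that bridge. Choose $a \in V(A)$ and $b \in V(B)$ with $d(a,b)=d(A,B)$ minimal. Then $a$ is the gate-projection of $b$ to $A$, and $b$ is the gate-projection of $a$ to $B$. If $a$ were not the gate of $b$, the gate $p$ would satisfy $d(b,p)<d(b,a)$, contradicting minimality; the same argument applies to $b$. Since $A \cap B=\emptyset$, we have $a \neq b$. So fix a geodesic from $a$ to $b$ and let $e=\{a,a'\}$ be its first edge, with $J$ its $\Theta$-class. I claim $J$ separates $A$ from $B$.

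By Theorem~\ref{thm:BigHyp}, $J$ cuts $M$ into two convex halfspaces: $H_a \ni a$ and $H_b \ni a'$. Since the geodesic from $a$ to $b$ crosses $J$ exactly once, at $e$, we get $b \in H_b$.

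To show $A \subseteq H_a$, suppose some $x \in V(A)$ lies in $H_b$. Because $a$ is the gate of $b$ to $A$, we have $d(b,x)=d(b,a)+d(a,x)$. Concatenating a geodesic from $b$ to $a$ with one from $a$ to $x$ therefore gives a geodesic from $b$ to $x$. Its first portion from $b$ to $a$ may be taken to be the reverse of our chosen geodesic, so it crosses $J$ at $e$. This takes the path into $H_a$, and it must then come back to $x \in H_b$, crossing $J$ a second time. That contradicts the fact from Theorem~\ref{thm:BigHyp} that geodesics cross each $\Theta$-class at most once.

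Symmetrically, $B \subseteq H_b$. Suppose $y \in V(B)$ lies in $H_a$. Since $b$ is the gate of $a$ to $B$, the path from $a$ to $b$ followed by a geodesic from $b$ to $y$ is a geodesic. It crosses $J$ at $e$ and then must return to $H_a$ to reach $y$, again crossing $J$ twice, which is a contradiction.

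The main subtlety is the gate step: it needs convex subgraphs to be gated, including when $A$ or $B$ is infinite. Gatedness (stated in the preliminaries) guarantees that gates exist. The minimizing pair exists because distances take values in $\mathbb{N}$.
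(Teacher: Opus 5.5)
Your proof is correct. The paper does not prove this proposition at all; it imports it from \cite{MR721770}. You instead give a short self-contained argument using only tools already recorded in the preliminaries. Your key step is that a pair $(a,b)$ realising $d(A,B)$ consists of mutual gates. The identities $d(b,x)=d(b,a)+d(a,x)$ for $x \in V(A)$ and $d(a,y)=d(a,b)+d(b,y)$ for $y \in V(B)$ then turn any vertex lying on the wrong side of $J$ into a geodesic that crosses $J$ twice, which contradicts Theorem~\ref{thm:BigHyp}. Gatedness of convex subgraphs and the geodesic characterisation of Theorem~\ref{thm:BigHyp} are all you need.

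Your argument actually gives more than the statement. Nothing in it depends on $e$ being the \emph{first} edge of the bridge. Moreover, the endpoints of any minimal-length path from $A$ to $B$ automatically realise $d(A,B)$. So every $\Theta$-class crossed by such a path separates $A$ and $B$. The reverse inclusion is trivial, so this is exactly Proposition~\ref{prop:HypSepConvex}, which the paper also only cites.

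Two cosmetic points: you write $M$ for the ambient graph that the statement calls $X$, and you tacitly assume $A$ and $B$ are non-empty, which the statement implicitly requires anyway.
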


\begin{cor}\label{cor:SeparatingCarriers}
Let $X$ be a median graph and $J_1,J_2$ two $\Theta$-classes. If $J_1$ and $J_2$ are not in contact, then there exists a $\Theta$-class separating the carriers $J_1$ and $J_2$.
\end{cor}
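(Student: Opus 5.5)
The plan is to apply Proposition~\ref{prop:MedianSepProp} directly to the two carriers $N(J_1)$ and $N(J_2)$. By Theorem~\ref{thm:BigHyp}, carriers of $\Theta$-classes are convex subgraphs of the median graph $X$. So the statement reduces to two points: showing that $N(J_1)\cap N(J_2)=\emptyset$ when $J_1$ and $J_2$ are not in contact, and checking that the $\Theta$-class provided by Proposition~\ref{prop:MedianSepProp} is a legitimate separating class.

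For the first point, argue by contrapositive. Suppose some vertex $x$ lies in $N(J_1)\cap N(J_2)$. Since $N(J_i)$ is the subgraph induced by the edges of $J_i$, the vertex $x$ is an endpoint of some edge $e_1\in J_1$ and of some edge $e_2\in J_2$. Then $e_1$ and $e_2$ are representatives of $J_1$ and $J_2$ sharing an endpoint.

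\begin{itemize}
	\item If $J_1\neq J_2$, this is exactly the definition of $J_1$ and $J_2$ being in contact.
	\item If $J_1=J_2$, the hypothesis ``not in contact'' is not meant to cover this case (contact is defined only for distinct classes). Presumably the statement intends $J_1\neq J_2$, and I would add this as a standing convention.
\end{itemize}

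Hence, when $J_1$ and $J_2$ are not in contact, the carriers are disjoint convex subgraphs.

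Proposition~\ref{prop:MedianSepProp} then gives a $\Theta$-class $J$ separating $N(J_1)$ from $N(J_2)$: one carrier lies in one halfspace of $J$ and the other carrier in the complementary halfspace. This is the conclusion. If needed, note also that $J$ differs from $J_1$ and $J_2$, since $J_1$ has edges in $N(J_1)$ and no edge of $J$ lies inside a halfspace of $J$.

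I expect no real obstacle here. The only care points are the degenerate case $J_1=J_2$ and reading ``the carriers $J_1$ and $J_2$'' as $N(J_1)$ and $N(J_2)$.
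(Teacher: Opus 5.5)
Your proposal is correct and is essentially the paper's own argument: not being in contact means the carriers $N(J_1)$ and $N(J_2)$ are disjoint, they are convex by Theorem~\ref{thm:BigHyp}, and Proposition~\ref{prop:MedianSepProp} then supplies the separating $\Theta$-class. Your two extra remarks are accurate details that the paper leaves implicit, namely that the statement tacitly assumes $J_1 \neq J_2$ and that the separating class must differ from both $J_1$ and $J_2$.
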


\begin{proof}
Saying that $J_1$ and $J_2$ are not in contact amounts to saying that their carriers do not intersect. Since carriers are convex according to Theorem~\ref{thm:BigHyp}, our corollary follows from Proposition~\ref{prop:MedianSepProp}. 
\end{proof}

\noindent
The next statement can also be extracted from \cite{MR721770}. See for instance \cite[Proposition~2.7]{MR4398242} for a precise statement. 

\begin{prop}\label{prop:HypSepConvex}
Let $X$ be a median graph and $A,B \leq X$ two convex subgraphs. If $\alpha$ is a path of minimal length connecting $A$ to $B$, then the $\Theta$-classes crossed by $\alpha$ coincide with the $\Theta$-classes separating $A$ and $B$. 
\end{prop}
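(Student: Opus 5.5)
The statement is Proposition~\ref{prop:HypSepConvex}. Fix a minimal-length path $\alpha$ from a vertex $a\in A$ to a vertex $b\in B$. I will prove the two inclusions between the $\Theta$-classes crossed by $\alpha$ and the $\Theta$-classes separating $A$ and $B$.

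\textbf{Separating classes are crossed.} Let $J$ be a $\Theta$-class separating $A$ and $B$. Then $a$ and $b$ lie in different halfspaces of $J$. By the first item of Theorem~\ref{thm:BigHyp}, removing the edges of $J$ disconnects these halfspaces, so any path from $a$ to $b$ must use an edge of $J$. In particular, $\alpha$ crosses $J$.

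\textbf{Crossed classes are separating.} Let $J$ be crossed by $\alpha$. By the last item of Theorem~\ref{thm:BigHyp}, $\alpha$ is a geodesic, so it crosses $J$ exactly once. Hence $a$ and $b$ lie in distinct halfspaces $H_a\ni a$ and $H_b\ni b$ of $J$. It remains to show that $A\subseteq H_a$ and $B\subseteq H_b$. Suppose instead that some $a'\in A$ lies in $H_b$.

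The plan is to use gate-projections. Since $\alpha$ has minimal length among paths from $A$ to $B$, I expect $a$ to be the gate of $b$ in $A$. To check this, let $p$ be the gate of $b$ in $A$ (it exists since convex subgraphs are gated). Then $d(b,a)=d(b,p)+d(p,a)$. Minimality of $\alpha$ gives $d(a,b)\le d(p,b)$, which forces $p=a$.

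Now apply the gate property to $a'$:
$$d(b,a')=d(b,a)+d(a,a').$$
So there is a geodesic from $b$ to $a'$ passing through $a$. Both endpoints $b$ and $a'$ lie in $H_b$, which is convex by Theorem~\ref{thm:BigHyp}. Therefore $a\in H_b$, a contradiction. Hence $A\subseteq H_a$. The symmetric argument, using that $b$ is the gate of $a$ in $B$, gives $B\subseteq H_b$. So $J$ separates $A$ and $B$.

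\textbf{Main obstacle.} The only delicate point is the gate identification $p=a$, and the computation above settles it. Everything else follows from the halfspace properties in Theorem~\ref{thm:BigHyp}.
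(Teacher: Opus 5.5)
Your proof is correct and complete. The paper does not prove this proposition itself; it only refers to \cite{MR721770} and \cite[Proposition~2.7]{MR4398242}. Your argument is the standard gate-projection proof of such statements. Minimality of $\alpha$ makes its endpoints $a$ and $b$ each other's gates in $A$ and $B$. Hence a vertex $a'\in A$ on the far side of $J$ would put $a$ on a geodesic from $b$ to $a'$, inside the convex halfspace $H_b$. Its merit is that it is self-contained: it uses only gatedness of convex subgraphs and the halfspace and geodesic facts of Theorem~\ref{thm:BigHyp}.

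One small wording point. That $\alpha$ is a geodesic follows from its minimality among paths from $A$ to $B$, since a shorter $a$--$b$ path would also join $A$ to $B$. The last item of Theorem~\ref{thm:BigHyp} is what then gives that $\alpha$ crosses $J$ exactly once, which places $a$ and $b$ in different halfspaces.
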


\noindent
Recall from the introduction that:

\begin{definition}
The \emph{crossing graph} $\mathrm{Cross}(M)$ of a median graph $M$ is the graph whose vertices are the $\Theta$-classes in $M$ and whose edges connect two $\Theta$-classes whenever they cross.
\end{definition}

\noindent
The following observation from \cite{MR1920184} will be needed later:

\begin{prop}\label{prop:ConnectedCrossing}
A median graph has a connected crossing graph if and only if it has no cut-vertex.
\end{prop}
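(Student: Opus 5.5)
We prove each implication separately, using the correspondence between edges of $M$ and $\Theta$-classes.

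\emph{Cut-vertex implies disconnected crossing graph.} Suppose $v$ is a cut-vertex of $M$. Choose two neighbours $a,b$ of $v$ lying in distinct components of $M \backslash \{v\}$, and let $J_a,J_b$ be the $\Theta$-classes of $\{v,a\}$ and $\{v,b\}$. The idea is to show that no chain of crossings links $J_a$ to $J_b$. To do so, partition the $\Theta$-classes according to which component of $M\backslash\{v\}$ contains their edges. This partition is well defined because $\Theta$ is the transitive closure of opposite edges in squares (\cite{MR1210081}), and a square cannot pass through a cut-vertex while meeting two components. Indeed, a $4$-cycle through $v$ uses two neighbours of $v$ joined by a path of length $2$ avoiding $v$, so both neighbours lie in the same component. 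The same argument shows that two crossing classes share a square, hence belong to the same part. Therefore $\mathrm{Cross}(M)$ is a disjoint union of at least two nonempty pieces and is disconnected.

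\emph{No cut-vertex implies connected crossing graph.} Assume $M$ has no cut-vertex, and fix two $\Theta$-classes $J,H$. First I reduce to the case where $J,H$ are in contact. Take edges $e\in J$ and $f\in H$ together with a path from $e$ to $f$; consecutive edges of this path share a vertex, so their classes are in contact. It therefore suffices to connect in $\mathrm{Cross}(M)$ any two classes containing edges $\{u,v\},\{v,w\}$ with a common vertex $v$.

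Since $v$ is not a cut-vertex, there is a path from $u$ to $w$ avoiding $v$, which together with $v$ gives a cycle $C$ through $u,v,w$. Choose such a cycle of minimal length. I then argue by induction on the length of $C$ that the classes of any two consecutive edges of $C$ are connected in $\mathrm{Cross}(M)$:

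\begin{itemize}
\item If $C$ has length $4$, then it is a square, since $M$ is bipartite and without triangles, and the two edges at $v$ span it. Lemma~\ref{lem:InterEdge} then shows their classes cross.
\item In general, every $\Theta$-class met by a cycle is met an even number of times. So the class $J$ of $\{u,v\}$ appears on another edge $e'$ of $C$.
\end{itemize}

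The edge $e'$ is the key tool: as we walk along $C$ from $\{u,v\}$ to $e'$, we must cross the class of $\{v,w\}$ or stay inside a carrier. I would rather use a cleaner approach based on the fact that median graphs are retracts of hypercubes, where cycles decompose into squares. Concretely, any cycle in a median graph is a sum, in the cycle space, of squares (median graphs have square-generated cycle spaces, since the cube complex is simply connected). Fix a square-decomposition of $C$ as a disk diagram and consider the induced subgraph $\Gamma$ of $\mathrm{Cross}(M)$ on the classes appearing in the diagram. In a disk diagram the dual curves are exactly the $\Theta$-classes, and they intersect precisely inside squares, so each such intersection gives a crossing. The union of the dual curves is connected whenever the diagram is connected and has no cut-vertex, which holds because $C$ is a simple cycle. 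Hence $J$ and the class of $\{v,w\}$ lie in the same component.

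The main obstacle is justifying this disk-diagram argument rigorously in purely graph-theoretic terms. One needs both that the cycle bounds a square diagram (simple connectedness of the cube complex) and that the dual curves of a disk diagram with embedded boundary form a connected family. The fallback is the induction on the length of a minimal cycle sketched above. That induction shortcuts $C$ through a square at a vertex where two edges of $C$ span a square, and such a vertex exists by a median-point argument applied to an antipodal vertex of $C$.
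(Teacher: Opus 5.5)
The paper does not prove this proposition; it imports it from \cite{MR1920184}. So your argument has to stand on its own, and in substance it does.

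The cut-vertex direction is complete: assign each edge at $v$ to the component of its other endpoint.

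For the converse, your reduction to two edges $\{u,v\},\{v,w\}$ is correct, and both ingredients you flag hold.
\begin{itemize}
\item[(a)] The square complex of $M$ is simply connected, because the cube complex of a median graph is CAT(0). This is what produces a disk diagram $D$ for $C$. Square-generation of the cycle space is a homological statement and would not suffice on its own.
\item[(b)] The boundary path of $D$ maps injectively onto the simple cycle $C$, so it is embedded. Hence $D$ has no spurs or cut-vertices and is a topological disk.
\end{itemize}
Dual curves do not correspond bijectively to $\Theta$-classes, since several may map to the same class, but this is harmless.

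You actually need less than global connectivity of dual curves. Since $v$ is a boundary vertex of the disk $D$, the squares of $D$ at $v$ form a fan $\{v,u\}=e_0,e_1,\ldots,e_k=\{v,w\}$. Consecutive $e_i,e_{i+1}$ are two sides of a square of $D$, so they map to two edges at $v$ spanning a square of $M$. Lemma~\ref{lem:InterEdge} then gives a chain of crossing classes.

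This fan is exactly a path from $u$ to $w$ in $\mathrm{link}_\square(v)$. That points to a much shorter route inside the paper's toolkit, if you are willing to import Proposition~\ref{prop:CutVertexLink}. With no cut-vertex, every cubical link is connected, and any path from $u$ to $w$ in $\mathrm{link}_\square(v)$ immediately yields successively crossing classes by Lemma~\ref{lem:InterEdge}.

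Your route is self-contained modulo the CAT(0) theorem, and in effect reproves the relevant half of Proposition~\ref{prop:CutVertexLink}. The link route takes two lines but relies on \cite{MR4862336}.

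Your fallback induction is unnecessary. Induction on the length of $C$ is also the wrong measure, since pushing $C$ across a corner square does not shorten it; diagram area would be the right quantity.
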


\noindent
Let us mentioned another characterisation of having (or not having) a cut-vertex, based on the following definition:

\begin{definition}
Let $M$ be a median graph and $x \in V(M)$ a vertex. The \emph{cubical link} of $x$ is the simplicial complex $\mathrm{link}_\square(x)$ whose vertices are the neighbours of $x$ and whose simplices are the collections $\{x_1, \ldots, x_n\}$ such that the edges $\{x,x_1\}, \ldots, \{x,x_n\}$ form the corner of an $n$-cube in $M$.
\end{definition}

\noindent
It follows from \cite[Claim~3.3]{MR4862336} that:

\begin{prop}\label{prop:CutVertexLink}
Let $M$ be a median graph. A vertex $x \in V(M)$ is a cut-vertex if and only if its cubical link is disconnected.
\end{prop}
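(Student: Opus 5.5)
The last statement is Proposition~\ref{prop:CutVertexLink}: a vertex $x$ of a median graph $M$ is a cut-vertex if and only if $\mathrm{link}_\square(x)$ is disconnected. The plan is to translate connectivity of the link into crossing of $\Theta$-classes. By Lemma~\ref{lem:InterEdge}, the neighbours of $x$ correspond bijectively to the $\Theta$-classes of the edges at $x$, and two neighbours are adjacent in $\mathrm{link}_\square(x)$ exactly when the corresponding classes cross. So the 1-skeleton of the link is the subgraph of $\mathrm{Cross}(M)$ induced by the classes in contact at $x$, and only the 1-skeleton matters for connectivity.

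\emph{Link disconnected $\Rightarrow$ cut-vertex.} Suppose the neighbours split into two nonempty sets $P,Q$ with no square spanned by an edge $\{x,p\}$, $p\in P$, and an edge $\{x,q\}$, $q\in Q$. I claim $x$ separates $P$ from $Q$. Take a path $p=v_0,\dots,v_n=q$ avoiding $x$, of minimal length, and let $y$ be a vertex on it minimising $d(x,\cdot)$. Then consider the median of $x,p,q$ together with the $\Theta$-classes $J_p$ of $\{x,p\}$ and $J_q$ of $\{x,q\}$. The aim is to show that some path avoiding $x$ forces two classes crossing at $x$ with one in $P$ and one in $Q$.

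A cleaner route: define $C_P$ as the union of the halfspaces $H_p$, for $p\in P$, not containing $x$, together with everything reached from them in $M\setminus\{x\}$. The step I expect to be the main obstacle is showing that a path in $M\setminus\{x\}$ from $H_p$ to $H_q$ produces a crossing. First I would try a shortest such path $\gamma$ between the convex sets $H_p$ and $H_q$. If $H_p\cap H_q\neq\emptyset$, then $J_p$ and $J_q$ cross, and Lemma~\ref{lem:InterEdge} gives a square at $x$, a contradiction. Otherwise, by Proposition~\ref{prop:HypSepConvex}, the classes crossed by $\gamma$ separate $H_p$ from $H_q$. Each such class separates $p$ from $q$, hence is $J_p$ or $J_q$ or separates $x$ from … , and the geodesic $p,x,q$ crosses only $J_p,J_q$. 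So $H_p$ and $H_q$ are at distance at most 2 through $x$. This reduces the problem to showing that the gate/median structure forces any connection avoiding $x$ to pass through some other neighbour $r$ of $x$. I would handle this by induction on the distance to $x$: project the path to the star via medians $m(x,v_i,v_{i+1})$, which lands on neighbours of $x$, and consecutive projections are equal or span a square at $x$. A sequence $p=r_0,r_1,\dots,r_k=q$ of neighbours with consecutive ones equal or linked then gives a path in the link.

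\emph{Cut-vertex $\Rightarrow$ link disconnected.} If the link is connected, then any two neighbours $p,q$ of $x$ are joined by a chain $p=r_0,\dots,r_k=q$ with consecutive edges spanning squares. Each square gives a path $r_i\to r_i'\to r_{i+1}$ avoiding $x$. Every vertex $v\neq x$ connects to some neighbour of $x$ along a geodesic to $x$ without passing through $x$. Hence $M\setminus\{x\}$ is connected and $x$ is not a cut-vertex.
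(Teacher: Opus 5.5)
Your proof of ``link connected $\Rightarrow$ $x$ is not a cut-vertex'' is correct. The gap is in the converse, ``link disconnected $\Rightarrow$ cut-vertex'', which is the real content of the statement.

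\textbf{The halfspace attempt.} It does not conclude. Showing $d(H_p,H_q)\leq 2$ says nothing about paths avoiding $x$, since such a path can travel around $x$ through the halfspaces of other neighbours.

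\textbf{The median projection.} Your final sketch relies on a map that does not do what you claim. Since $d(v_i,v_{i+1})=1$, the median $m(x,v_i,v_{i+1})$ is simply whichever of $v_i,v_{i+1}$ is closer to $x$. So it is in general not a neighbour of $x$. Moreover, ``consecutive projections are equal or span a square at $x$'' is exactly what has to be proved, and you give no argument for it. The ``induction on the distance to $x$'' is never spelled out.

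\textbf{What is missing, and how to fix it.} For $v\neq x$, let $S(v)$ be the nonempty set of neighbours of $x$ lying in the interval $I(x,v)$, i.e.\ on some geodesic from $x$ to $v$. The key fact is that any two distinct $y_1,y_2\in S(v)$ span a square with $x$. Here is why:
\begin{itemize}
\item A geodesic crosses each $\Theta$-class at most once, so $v$ lies in both halfspaces not containing $x$ delimited by the $\Theta$-classes of $\{x,y_1\}$ and $\{x,y_2\}$.
\item By convexity, the median of $y_1,y_2,v$ lies in both of these halfspaces.
\item Hence this median differs from $x$, $y_1$ and $y_2$, so it is a second common neighbour of $y_1,y_2$.
\end{itemize}
This is exactly the median argument in the proof of Lemma~\ref{lem:InterEdge}. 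Your own remark that $H_p\cap H_q\neq\emptyset$ forces a square at $x$ is this fact in disguise; you applied it to $p,q$ instead of to the vertices $v_i$ of the path.

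Two further observations complete the argument:
\begin{itemize}
\item For adjacent $v_i,v_{i+1}$, one of $S(v_i),S(v_{i+1})$ contains the other. Indeed, $M$ is bipartite, so one endpoint, say $v_i$, lies in $I(x,v_{i+1})$, and then $I(x,v_i)\subseteq I(x,v_{i+1})$.
\item $S(p)=\{p\}$ and $S(q)=\{q\}$.
\end{itemize}
Hence, along any path $p=v_0,\dots,v_n=q$ avoiding $x$, the sets $S(v_i)$ are cliques in the $1$-skeleton of $\mathrm{link}_\square(x)$, and consecutive ones intersect. So $p$ and $q$ lie in the same component of the link. Using $S(v_i)$ in place of your median-projection, the proof goes through.
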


\section{Median graphs with the same crossing graph}

\subsection{Simplex graphs}

\noindent
As already mentioned in the introduction, it is known that every graph $X$ can be described as the crossing graph of some median graph $M(X)$. See for instance \cite[Theorem~3.1]{MR1920184}, \cite[Proposition~3.9]{Roller}, \cite[Proposition~2.19]{MR3217625}. In \cite{MR1920184}, a simple model for the median graph $M(X)$ is given, namely the \emph{simplex graph} of $X$.

\begin{definition}
Let $X$ be a graph. The \emph{simplex graph} $\mathrm{Simp}(X)$ is the graph whose vertices are the (possibly empty) cliques of $X$ and whose edges connect two cliques whenever one can be obtained from the other by adding or removing a single vertex. 
\end{definition}

\noindent
The terminology is taken from \cite{MR988098}. The following assertion can be found in \cite[p.\ 200]{MR874238}, where it is observed that simplex graphs are naturally covering graphs of median semilattices. An explicit statement can be found in \cite[Proposition~2.2]{MR988098}, where it is shown that a simplex graph can be easily described as a connected and median-closed subgraph in a hypercube. 

\begin{prop}
Simplex graphs of connected graphs are median.
\end{prop}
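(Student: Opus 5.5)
We prove that $\mathrm{Simp}(X)$ is median by realising it inside the hypercube $Q(V(X))$ and checking the median condition there. A clique of $X$ is a finite subset of $V(X)$, so $\mathrm{Simp}(X)$ is the subgraph of $Q(V(X))$ induced by the finite cliques. In $Q(S)$ the distance between two finite subsets $A,B$ is $|A \triangle B|$. The median of $A,B,C$ is the majority set
$$m(A,B,C) = (A\cap B)\cup(B\cap C)\cup(A\cap C).$$
It is the unique vertex lying on geodesics between each pair: coordinatewise, a vertex $m$ satisfies $d(A,m)+d(m,B)=d(A,B)$ exactly when $A\cap B\subseteq m\subseteq A\cup B$. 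Imposing this for all three pairs forces $m$ to be the majority set.

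The first step is to show that $\mathrm{Simp}(X)$ is an isometric subgraph of $Q(V(X))$. Given cliques $A,B$, walk from $A$ to $B$ by first removing the vertices of $A\setminus B$ one at a time, reaching $A\cap B$. Then add the vertices of $B\setminus A$ one at a time. Every intermediate set is a subset of $A$ or of $B$, hence a clique. The path has length $|A\triangle B|$, which equals the hypercube distance, so the two distances agree. The graph is also connected, since every clique is joined to $\emptyset$. Connectedness of $X$ is not actually needed for this argument.

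The second step is to show that the majority set of three cliques is a clique. Take two distinct vertices $u,v\in m(A,B,C)$. Each of them lies in at least two of the three sets $A,B,C$. By pigeonhole, two subsets of size two from a three-element set must share an element, so $u$ and $v$ lie in a common clique among $A,B,C$. Hence $u$ and $v$ are adjacent in $X$, and $m(A,B,C)$ is a finite clique.

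The third step combines these. By the first step, distances in $\mathrm{Simp}(X)$ equal distances in $Q(V(X))$. So a vertex of $\mathrm{Simp}(X)$ is a median of $A,B,C$ in $\mathrm{Simp}(X)$ if and only if it is a median in the hypercube, and the hypercube median is unique. By the second step this unique hypercube median lies in $\mathrm{Simp}(X)$, which gives existence and uniqueness of medians in $\mathrm{Simp}(X)$.

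The only step with any content is the coordinatewise characterisation of medians in $Q(S)$, and it is routine. For each element of $S$, equality in the triangle inequality says that membership in $m$ agrees with $A$ and $B$ wherever $A$ and $B$ agree. Applying this to all three pairs forces the majority value at every coordinate.
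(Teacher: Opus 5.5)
Your proof is correct and follows the route the paper relies on. The paper gives no argument of its own; it cites the description of $\mathrm{Simp}(X)$ as a connected, median-closed subgraph of a hypercube. You verify exactly this (the majority set of three cliques is a clique), and you check directly that the embedding is isometric rather than invoking the general fact that connected median-closed subgraphs of hypercubes are median, which makes your argument self-contained. Your remark that connectedness of $X$ is unnecessary is also right, since every clique is joined to $\emptyset$.
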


\noindent
Simplex graphs will play a central role in the proofs of our main results. Below, we record some information regarding $\Theta$-classes in simplex graphs. 

\begin{lemma}\label{lem:SimpHyp}
Let $X$ be a connected graph. The following assertions hold:
\begin{itemize}
	\item For every $x \in V(X)$, let $J_x$ denote the $\Theta$-class of $\{\emptyset, \{x\}\}$. The map $x \mapsto J_x$ induces a bijection from the vertices of $X$ to the $\Theta$-classes of $\mathrm{Simp}(X)$.
	\item For every $x \in V(X)$, $J_x = \{ \{ S, S \backslash\{x\}\} \mid S \text{ clique containing } x\}$ and $V(N(J_x))= \{ S \text{ clique} \mid S \cup \{x\} \text{ clique}\}$. 
	\item For every $x \in V(X)$, the vertex-sets of the fibres of $J_x$ are $\{ S \text{ clique} \mid x \in V(S) \}$ and $\{ S \backslash \{x\} \mid x \in V(S)\}$. 
\end{itemize}
\end{lemma}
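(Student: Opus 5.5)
We prove the three assertions of Lemma~\ref{lem:SimpHyp} in the order (ii), (iii), (i), working directly inside $\mathrm{Simp}(X)$, which is median by the previous proposition. Throughout we use the characterisation of $\Theta$ in median graphs as the reflexive-transitive closure of ``being opposite in a square''.

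\textbf{The class $J_x$.} An edge of $\mathrm{Simp}(X)$ has the form $\{S, S\cup\{y\}\}$ with $y\notin S$ and $S\cup\{y\}$ a clique; we \emph{label} it by $y$. A square in $\mathrm{Simp}(X)$ is an induced $4$-cycle. Since two cliques differing in two vertices have at most two common neighbours, a square has the form
\[
S,\quad S\cup\{y\},\quad S\cup\{z\},\quad S\cup\{y,z\}
\]
or the symmetric configuration. In either case, opposite edges carry the same label, so $\Theta$ preserves labels. Conversely, fix a clique $S\ni x$ and write $S=\{x,s_1,\dots,s_k\}$. The squares spanned by $\{x\}$ and successive $s_i$, namely on $T$, $T\cup\{x\}$, $T\cup\{s_i\}$, $T\cup\{x,s_i\}$ with $T$ growing, connect $\{\emptyset,\{x\}\}$ to $\{S\setminus\{x\},S\}$ through a chain of opposite edges. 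Hence $J_x$ is exactly the set of edges labelled $x$, which is the stated set $\{\{S,S\setminus\{x\}\}\}$.

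\textbf{Carrier and fibres.} The vertices of $N(J_x)$ are the endpoints of these edges, that is, the cliques $S$ for which $S\cup\{x\}$ is a clique. Removing the edges of $J_x$ leaves the cliques containing $x$ and the cliques not containing $x$ disconnected from each other inside $N(J_x)$. Each of these two sets is connected, since any clique can be shrunk to $\{x\}$ (respectively to $\emptyset$) one vertex at a time while staying in the set. The first set is $\{S \text{ clique} \mid x\in V(S)\}$, and the second is exactly $\{S\setminus\{x\}\mid x\in V(S)\}$. These are therefore the two fibres, which proves (iii).

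\textbf{The bijection.} Every edge carries some label $y$ and so lies in $J_y$; thus every $\Theta$-class is some $J_x$, which gives surjectivity. Distinct labels give distinct classes, which gives injectivity.

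The only real subtlety is the claim that every square has the described form. Two distinct cliques at distance $2$ with two common neighbours must differ by exactly two elements, giving either the pattern $S\subset S\cup\{y\},S\cup\{z\}\subset S\cup\{y,z\}$ or the symmetric one. A pattern such as $S\cup\{y\}$ and $S\cup\{z\}$ has common neighbours $S$ and possibly $S\cup\{y,z\}$, which again fits the described form. This is a routine check.
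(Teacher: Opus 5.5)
Your proof is correct. It shares the paper's key step: the chain of squares $T, T\cup\{x\}, T\cup\{s_i\}, T\cup\{x,s_i\}$ is exactly the paper's figure showing that $\{S\setminus\{x\},S\}\in J_x$. It departs from the paper in two places.

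First, to show that $\{S,S\setminus\{x\}\}$ lies in no $J_y$ with $y\neq x$, the paper argues in one line. Otherwise $J_x=J_y$, so $\{\emptyset,\{x\}\}$ and $\{\emptyset,\{y\}\}$ would be $\Theta$-equivalent edges sharing the endpoint $\emptyset$, which Lemma~\ref{lem:InterEdge} forbids. You instead combine the description of $\Theta$ as the transitive closure of square-opposition with a label invariant. The price is the classification of squares in $\mathrm{Simp}(X)$, and your check is fine: in any subgraph of a hypercube, the labels around a $4$-cycle must cancel in symmetric difference, so they read $a,b,a,b$. The gain is that you avoid Lemma~\ref{lem:InterEdge}, and with it the geodesic characterisation from Theorem~\ref{thm:BigHyp}.

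Second, for the fibres, the paper proves that $J_x$ separates $S_1$ and $S_2$ iff $x\in V(S_1)\triangle V(S_2)$, using geodesics that cross each class at most once. It then reads off the fibres as the intersections of the carrier with the two halfspaces. Knowing that each intersection is a single fibre rests on the product structure of carriers in Theorem~\ref{thm:BigHyp}. You compute the components of $\partial J_x$ directly. The only edges between cliques containing $x$ and cliques not containing $x$ are labelled $x$. Each of the two sets is connected, because a clique can be shrunk to $\{x\}$, respectively to $\emptyset$, while staying in the carrier. This is more elementary and makes the connectedness explicit. The same shrinking argument run in all of $\mathrm{Simp}(X)$ would also recover the paper's description of the halfspaces of $J_x$.
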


\begin{proof}
Our lemma will be essentially  a consequence of the following observation:

\begin{claim}\label{claim:SimpTheta}
Let $x,y \in V(X)$ be two vertices. For every clique $S$ containing $x$, the edge $\{S, S \backslash \{x\}\}$ is in the same $\Theta$-class as $\{\emptyset, \{y\}\}$ if and only if $y=x$. 
\end{claim}

\noindent
Fix an enumeration $x,x_1, \ldots, x_k$ of the vertices of $S$. It follows from the following geometric configuration that the edge $\{S,S \backslash \{x\}\}$ lies in the same $\Theta$-class as $\{\emptyset, \{x\}\}$.
\begin{center}
\includegraphics[width=0.7\linewidth]{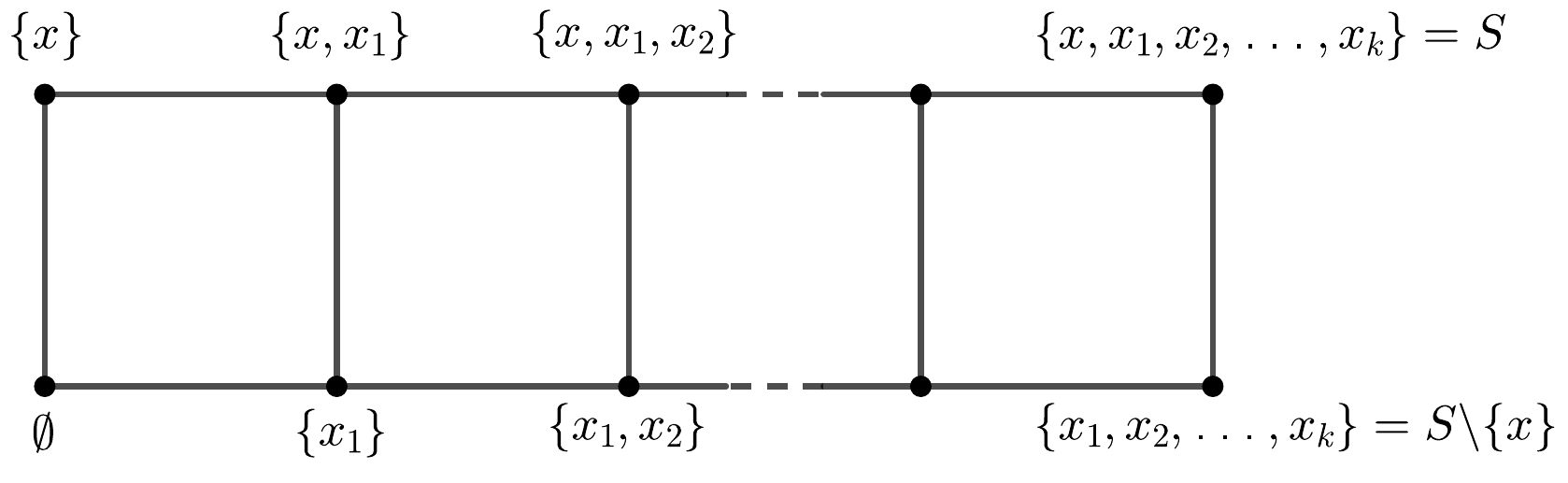}
\end{center}
Notice that, as a consequence of Lemma~\ref{lem:InterEdge}, two edges $\{\emptyset, \{x\}\}$ and $\{\emptyset, \{y\}\}$ belong to the same $\Theta$-class if and only if they agree, i.e.\ $x=y$. Thus, Claim~\ref{claim:SimpTheta} is proved. 

\medskip \noindent
It follows from Claim~\ref{claim:SimpTheta} that every $\Theta$-class is of the form $J_x$ for some $x \in V(X)$ and that two $\Theta$-classes $J_x$ and $J_y$ coincide if and only if $x=y$. Therefore, the map $x \mapsto J_x$ induces a bijection from the vertices of $X$ to the $\Theta$-classes of $\mathrm{Simp}(X)$, as desired. Claim~\ref{claim:SimpTheta} also immediately implies that, for every $x \in V(X)$, $J_x = \{ \{ S, S \backslash\{x\}\} \mid S \text{ clique containing } x\}$. Moreover, since the vertex-set of the carrier $N(J_x)$ coincides with the set of all the endpoints of the edges in $J_x$, the equality $V(N(J_x))= \{ S \text{ clique} \mid S \cup \{x\} \text{ clique}\}$ is clear. The description of the fibres of $J_x$ then follows from the following observation:

\begin{claim}
Let $x \in V(X)$ be a vertex. Two cliques $S_1$ and $S_2$ are separated by $J_x$ if and only if $x$ belongs to the symmetric difference $V(S_1) \triangle V(S_2)$. 
\end{claim}

\noindent
A path $\alpha$ in $\mathrm{Simp}(X)$ from $S_1$ to $S_2$ amounts to adding and removing vertices, say $v(1), \ldots, v(k)$ successively, in order to get $S_2$ from $S_1$. If $v(i)=v(j)$ for some distinct $1 \leq i,j \leq k$, then it follows from Claim~\ref{claim:SimpTheta} that $\alpha$ crosses some $\Theta$-class twice, and consequently cannot be a geodesic. Thus, if we choose $\alpha$ as a geodesic, then $v(1), \ldots, v(k)$ are pairwise distinct. Since $S_2$ is obtained from $S_1$ by adding or removing $v(1),\ldots, v(k)$ successively, it follows that $V(S_1) \triangle V(S_2)= \{v(1), \ldots, v(k)\}$. 

\medskip \noindent
Now, our $\Theta$-class $J_x$ separates $S_1$ and $S_2$ if and only if it is crossed by $\alpha$, which amounts to saying that $x \in \{v(1), \ldots, v(k)\}$, or equivalently that $x \in V(S_1) \triangle V(S_2)$ according to the equality just proved. 
\end{proof}

\noindent
We record for future use the following characterisation of simplex graphs, which is also of independent interest.

\begin{lemma}\label{lem:PairwiseContact}
A finite median graph is a simplex graph if and only if its $\Theta$-classes are pairwise in contact. 
\end{lemma}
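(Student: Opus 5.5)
The forward direction is immediate. Assume $M=\mathrm{Simp}(X)$ with $X$ finite, and take two $\Theta$-classes $J_x$ and $J_y$. By Lemma~\ref{lem:SimpHyp}, the vertex $\emptyset$ lies in both carriers $N(J_x)$ and $N(J_y)$, since $\emptyset\cup\{x\}$ and $\emptyset\cup\{y\}$ are cliques. The edges $\{\emptyset,\{x\}\}$ and $\{\emptyset,\{y\}\}$ share the endpoint $\emptyset$, so $J_x$ and $J_y$ are in contact.

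For the converse, let $M$ be a finite median graph whose $\Theta$-classes are pairwise in contact, and set $X=\mathrm{Cross}(M)$. I will build an explicit isomorphism $M\to\mathrm{Simp}(X)$, and the plan has three steps.

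\textbf{Step 1: a base vertex.} Carriers are convex (Theorem~\ref{thm:BigHyp}), they pairwise intersect, and there are finitely many of them. By the Helly property, as in the proof of Corollary~\ref{cor:ContactDegree}, there is a vertex $o$ lying in every carrier. So every $\Theta$-class $J$ contains an edge $\{o,o_J\}$, and by Lemma~\ref{lem:InterEdge} these edges are pairwise distinct. Orient each $\Theta$-class so that its halfspace $J^-$ contains $o$ and $J^+$ does not.

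\textbf{Step 2: the map.} To each vertex $v$, associate the set $\sigma(v)$ of $\Theta$-classes separating $o$ from $v$. Any two classes $J,H\in\sigma(v)$ have $v\in J^+\cap H^+$. The edges $\{o,o_J\}$ and $\{o,o_H\}$ share $o$, so the argument in the proof of Lemma~\ref{lem:InterEdge} applies: the median of $o_J,o_H,v$ is a common neighbour of $o_J$ and $o_H$. Hence the two edges span a square, $J$ and $H$ cross, and $\sigma(v)$ is a clique of $X$. Distances equal the number of separating $\Theta$-classes (Theorem~\ref{thm:BigHyp}), so $\sigma$ is injective. Moreover, adjacent vertices differ by exactly one $\Theta$-class, so $\sigma$ is a graph-morphism into $\mathrm{Simp}(X)$, and it is isometric because both distances are symmetric differences.

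\textbf{Step 3: surjectivity (main obstacle).} Let $S=\{J_1,\dots,J_k\}$ be a clique of $X$. The edges $\{o,o_{J_i}\}$ are pairwise in crossing classes, so by Lemma~\ref{lem:InterEdge} they pairwise span squares. By the cube condition they span a $k$-cube, whose vertex opposite to $o$ is separated from $o$ by exactly $J_1,\dots,J_k$; that vertex is $\sigma^{-1}(S)$. Thus $\sigma$ is a bijection on vertices.

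It remains to show $\sigma$ is an isomorphism, i.e.\ that edges of $\mathrm{Simp}(X)$ come from edges of $M$. If $\sigma(u)$ and $\sigma(v)$ differ by a single vertex, then $d(u,v)=1$ by the distance formula, so $u$ and $v$ are adjacent. Hence $M\cong\mathrm{Simp}(X)$.

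The delicate points are the uses of Helly and of the cube condition, both of which rely on finiteness. I expect the crux to be Step 3, which turns cliques of the crossing graph into actual cubes at the single vertex $o$. This works only because every class has a representative edge at $o$, so "crossing" can be upgraded to "spanning a square at $o$" via Lemma~\ref{lem:InterEdge}.
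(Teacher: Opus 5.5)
Your proof is correct and follows the paper's argument closely: a Helly base vertex lying in every carrier, the map $v \mapsto \sigma(v)$ sending a vertex to the set of $\Theta$-classes separating it from the base vertex, the symmetric-difference identity for injectivity and adjacency, and the cube condition at the base vertex for surjectivity. The only small difference is how you show that $\sigma(v)$ is a clique: you rerun the median argument of Lemma~\ref{lem:InterEdge} at $o$, whereas the paper argues that two non-crossing classes separating the base vertex from $v$ would be nested, so that one of them separates the base vertex from the carrier of the other, contradicting the choice of base vertex.
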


\begin{proof}
Let $X$ be a connected graph. According to Lemma~\ref{lem:SimpHyp}, the edges $\{\emptyset, \{x\}\}$ for $x \in V(X)$ provide representatives for the $\Theta$-classes in $\mathrm{Simp}(X)$. Since they all share $\emptyset$ as an endpoint, they are clearly pairwise in contact. Thus, $\mathrm{Simp}(X)$ is a median graph all of whose $\Theta$-classes are in contact.

\medskip \noindent
Conversely, let $M$ be a finite median graph whose $\Theta$-classes are pairwise in contact. Because carriers of $\Theta$-classes are gated, it follows from the Helly property satisfied by gated subgraphs that $M$ contains some vertex, say $x_0$, that belongs to the carriers of all the $\Theta$-classes. We claim that the map
$$x \mapsto \mathcal{H}(x_0|x):= \{ \Theta\text{-classes separating $x_0$ and $x$} \}$$
induces a graph-isomorphism $\Xi : M \to \mathrm{Simp}(\mathrm{Cross}(M))$. First, let us verify that our map is well-defined, i.e.\ $\mathcal{H}(x_0|x)$ is a collection of pairwise crossing $\Theta$-classes for every $x \in V(M)$. If not, then there exists a vertex $x \in V(M)$ that is separated from $x_0$ by two non-crossing $\Theta$-classes, say $A$ and $B$. Up to renaming our $\Theta$-classes, say that $A$ separates $x_0$ from $B$. But, then, $x_0$ cannot belong the carrier of $B$, contradicting our definition of $x_0$. So we do have a well-defined map $\Xi$ from the vertices of $M$ tot he vertices of $\mathrm{Simp}(\mathrm{Cross}(M))$. Then, notice that
$$\mathcal{H}(x_0|x) \triangle \mathcal{H}(x_0|y) = \mathcal{H}(x|y) \text{ for all } x,y \in V(M)$$
where $\triangle$ denotes the symmetric difference. This implies that $\Xi$ is injective, since, given two vertices $x,y \in V(M)$, $\Xi(x)=\Xi(y)$ amounts to saying that $\mathcal{H}(x_0|x) = \mathcal{H}(x_0|y)$, hence  $\mathcal{H}(x|y)= \mathcal{H}(x_0|x) \triangle \mathcal{H}(x_0|y) = \emptyset$ and finally $x=y$. This also shows that $\Xi$ preserves the adjacency and the non-adjacency of vertices. Indeed, two vertices $x,y \in V(M)$ are adjacent in $M$ if and only if $|\mathcal{H}(x|y)|=1$, which amounts to saying that $|\mathcal{H}(x_0|x) \triangle \mathcal{H}(x_0|y)|=1$ or equivalently that $\mathcal{H}(x_0|x)$ can be obtained from $\mathcal{H}(x_0|y)$ by adding or removing a single vertex, which exactly means that $\mathcal{H}(x_0|x)$ and $\mathcal{H}(x_0|y)$ are adjacent in $\mathrm{Simp}(\mathrm{Cross}(M))$.

\medskip \noindent
It remains to verify that $\Xi$ is surjective. So let $J_1, \ldots, J_n$ be pairwise crossing $\Theta$-classes. Because $x_0$ belongs to the carriers of all these $\Theta$-classes, there exist neighbours $z_1, \ldots, z_n$ of $x_0$ such that, for every $1 \leq i \leq n$, $\{x_0,z_i\} \in J_i$. Given two distinct indices $1 \leq i,j \leq n$, the fact that $J_i$ and $J_j$ cross implies that the edges $\{x_0,z_i\}$ and $\{x_0,z_j\}$ span a square (Lemma~\ref{lem:InterEdge}). The cube condition implies that $x_0,z_1, \ldots, z_n$ span an $n$-cube $Q$. Let $x$ denote the vertex of $Q$ opposite to $x_0$. By construction, $\Xi(x)= \mathcal{H}(x_0|x) = \{J_1, \ldots, J_n\}$. This proves that $\Xi$ is indeed surjective. 
\end{proof}

\noindent
Finally, we conclude this section with the following observation, which shows that the median degree is always bounded above by the number of vertices and that the simplex graph is the only median graph that has this maximal degree. 

\begin{prop}\label{prop:MaxDegree}
Let $X$ be a finite graph. Every median graph $M$ whose crossing graph is isomorphic to $X$ has degree $\leq |V(X)|$. Moreover, $\mathrm{deg}(M)=|V(X)|$ if and only if $M$ is isomorphic to the simplex graph $\mathrm{Simp}(X)$. 
\end{prop}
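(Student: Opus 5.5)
The plan is to read degrees at a vertex through the $\Theta$-classes of its incident edges. Let $M$ be a median graph with $\mathrm{Cross}(M) \simeq X$ and let $x \in V(M)$. By the first assertion of Lemma~\ref{lem:InterEdge}, distinct edges at $x$ belong to distinct $\Theta$-classes. So the map sending a neighbour $y$ of $x$ to the $\Theta$-class of $\{x,y\}$ is injective into the set of $\Theta$-classes of $M$, which is in bijection with $V(X)$. Hence $\deg(x) \leq |V(X)|$ for every $x$, and $\deg(M) \leq |V(X)|$. Note that $M$ is finite, since it has finitely many $\Theta$-classes and distances count separating $\Theta$-classes (Theorem~\ref{thm:BigHyp}); two vertices separated by the same set of $\Theta$-classes from a basepoint coincide.

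For the equality case, first assume $\deg(M)=|V(X)|$. Since $X$ is finite, the degree is attained at some vertex $x_0$, and by the injectivity above every $\Theta$-class of $M$ contains an edge incident to $x_0$. In particular all $\Theta$-classes are pairwise in contact. Since $M$ is finite, Lemma~\ref{lem:PairwiseContact} shows that $M$ is a simplex graph. The proof of that lemma even gives the explicit isomorphism $M \simeq \mathrm{Simp}(\mathrm{Cross}(M)) \simeq \mathrm{Simp}(X)$.

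Conversely, suppose $M \simeq \mathrm{Simp}(X)$. The vertex $\emptyset$ is adjacent to $\{x\}$ for every $x \in V(X)$, so it has degree $|V(X)|$. Combined with the upper bound, this gives $\deg(M)=|V(X)|$.

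The only delicate point I foresee is the connectedness hypothesis implicit in Lemma~\ref{lem:SimpHyp} and in the proposition that simplex graphs are median. I will handle it by noting that Lemma~\ref{lem:PairwiseContact} produces the isomorphism $\Xi$ directly from $M$. Hence $\mathrm{Simp}(X)$ is automatically median in the direction where it is needed, and for the converse $M$ is assumed median by hypothesis.
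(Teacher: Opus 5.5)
Your proposal is correct and matches the paper's proof step by step: Lemma~\ref{lem:InterEdge} gives injectivity of edges at a vertex into $\Theta$-classes, which yields the bound; a vertex of maximal degree meets every $\Theta$-class, so Lemma~\ref{lem:PairwiseContact} gives the equality case; and the vertex $\emptyset$ handles the converse. You depart from the paper in two small ways that strengthen it: you check explicitly that $M$ is finite, which Lemma~\ref{lem:PairwiseContact} requires and the paper leaves implicit, and you get $\deg(\mathrm{Simp}(X))\leq |V(X)|$ from the general bound instead of counting the neighbours of each clique.
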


\begin{proof}
Let $M$ be a median graph having $X$ has its crossing graph. Given a vertex $x \in V(M)$ of maximal degree and its neighbours $x_1, \ldots, x_n$, we know from Lemma~\ref{lem:InterEdge} that the edges $\{x,x_1\},\ldots, \{x,x_n\}$ belong to pairwise distinct $\Theta$-classes, so the degree of $x$ is at most the number of $\Theta$-classes in $M$, which coincides with the number of vertices of $X$. Hence $\mathrm{deg}(M) = \mathrm{deg}(x) \leq |V(X)|$. If there is equality, then $\{x,x_1\},\ldots, \{x,x_n\}$ must be representatives of all the $\Theta$-classes of $M$, which implies that $M$ has its $\Theta$-classes pairwise in contact. We deduce from Lemma~\ref{lem:PairwiseContact} that $M$ is isomorphic to $\mathrm{Simp}(\mathrm{Cross}(M)) = \mathrm{Simp}(X)$. Conversely, if $M$ isomorphic to $\mathrm{Simp}(X)$, then we deduce from the next observation that $\mathrm{deg}(M)=|V(X)|$.

\begin{claim}
The degree of $\mathrm{Simp}(X)$ is $|V(X)|$.
\end{claim}

\noindent
Let $S$ be a clique in $X$. The degree of $S$ in $\mathrm{Simp}(X)$ is the number of vertices in $S$ (which can be removed from $S$ to get smaller cliques) plus the number of vertices that are not in $S$ but that are adjacent to all the vertices in $S$ (which can be added to $S$ to get larger cliques). Clearly, $\mathrm{deg}(S) \leq |V(X)|$. On the other hand, $\empty$ is adjacent to $\{x\}$ for every $x \in V(X)$, so it has degree exactly $|V(X)|$. We conclude that $\mathrm{deg}(\mathrm{Simp}(X)) = |V(X)|$, as desired.
\end{proof}

\subsection{Slidings}\label{section:Sliding}

\noindent
In this section, we describe \emph{slidings} of median graphs, the central graph-theoretic operation underlying our characterisation of median graphs with isomorphic crossing graphs. In order to give a precise definition, we need to introduce some notation and terminology. 

\medskip \noindent
In the sequel, we will use the following notation. Given two graphs $A,B$, two subgraphs $C_1\leq A, C_2 \leq B$, and a graph-isomorphism $\varphi : C_1 \to C_2$, we denote by $A \ast_{C_1=_\varphi C_2} B$ the amalgam
$$\left( A \sqcup B \right) / \left( a = \varphi(a) \ \forall a \in V(A), \ e = \varphi(e) \ \forall e \in E(A) \right).$$
We recall that amalgamating two median graphs along isomorphic convex subgraphs still yields a median graph. 

\medskip \noindent
Roughly speaking, a sliding will transform an amalgam $A \ast_{C_1 = C_2} B$ into a new amalgam $A \ast_{C_1'=C_2}B$ by ``sliding'' $B$ along $A$ from $C_1$ to $C_1'$. Here, somehow the subgraphs $C_1$ and $C_1'$ have to be parallel copies of the same subgraph. In order to make precise this intuition, we need the following definition:

\begin{definition}
Let $M$ be a median graph. Two convex subgraphs $Y,Z \leq M$ are \emph{parallel} if they are crossed by the same $\Theta$-classes. 
\end{definition}

\noindent
The terminology is justfied by the following characterisation of parallelism:

\begin{prop}
Let $M$ be a median graph. Two convex subgraphs 
\begin{itemize}
	\item[(i)] $Y$ and $Z$ are parallel;
	\item[(ii)] the gate-projection to $Z$ induces a graph-isomorphism $Y \to Z$;
	\item[(iii)] there exists an isometric embedding $Y \times [0,n] \hookrightarrow M$ that sends $Y \times \{0\}$ to $Y$ and $Y\times\{n\}$ to $Z$.
\end{itemize}
\end{prop}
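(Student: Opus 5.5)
We prove the cycle of implications (i)$\Rightarrow$(ii)$\Rightarrow$(iii)$\Rightarrow$(i). The main tool is the last item of Theorem~\ref{thm:BigHyp}: the distance between two vertices equals the number of $\Theta$-classes separating them. This will be combined with Proposition~\ref{prop:HypSepConvex} and with the gatedness of convex subgraphs. Throughout, let $\pi : M \to Z$ denote the gate-projection. Set $\mathcal{S}$ to be the set of $\Theta$-classes separating $Y$ and $Z$, and put $n := |\mathcal{S}| = d(Y,Z)$.

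For (i)$\Rightarrow$(ii), the first step is a standard fact: for every vertex $y$, the $\Theta$-classes separating $y$ from $\pi(y)$ are exactly those separating $y$ from $Z$. This follows from Proposition~\ref{prop:HypSepConvex} applied to $\{y\}$ and $Z$, since a geodesic from $y$ to its gate is a shortest path to $Z$. Next, take $y,y' \in Y$. Any $\Theta$-class separating $\pi(y)$ and $\pi(y')$ crosses $Z$, and by gatedness it also separates $y$ and $y'$. Conversely, let $J$ be a class separating $y$ and $y'$. Then $J$ crosses $Y$, so by (i) it crosses $Z$. Hence $J$ does not separate $y$ from $Z$, nor $y'$ from $Z$, and therefore it must separate $\pi(y)$ and $\pi(y')$. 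Consequently $\mathcal{H}(y|y') = \mathcal{H}(\pi(y)|\pi(y'))$, so $\pi$ restricted to $Y$ is an isometric embedding. For surjectivity, the symmetric argument shows that the projection $\pi' : Z \to Y$ is also an isometry. For $z \in Z$, the vertex $\pi(\pi'(z))$ is separated from $z$ only by classes crossing $Z$ (as both lie in $Z$). Both $z$ and $\pi(\pi'(z))$ are separated from $\pi'(z)$ exactly by the classes in $\mathcal{S}$, which cross neither $Y$ nor $Z$. It follows that $\pi(\pi'(z)) = z$. A bijective isometry between graphs is a graph-isomorphism.

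For (ii)$\Rightarrow$(iii), fix $y_0 \in Y$ and a geodesic $y_0 = p_0, p_1, \ldots, p_n = \pi(y_0)$, whose edges lie in the classes $J_1, \ldots, J_n$ of $\mathcal{S}$. Define $\Phi(y,i)$ as the gate-projection of $y$ onto the convex hull of $Y \cup \{p_i\}$. A cleaner alternative would be to define $\Phi(y,i)$ as the median of $y$, $\pi(y)$ and $p_i$. In either case, we verify that every class of $\mathcal{S}$ separates $y$ from $\pi(y)$. This holds because, under (ii), $\pi$ is an isomorphism, which forces every class crossing $Y$ to cross $Z$, and the separating classes are then the same for every $y$. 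Given this, the map $\Phi$ records for each $(y,i)$ the set of separating classes $\mathcal{H}(y_0|y) \cup \{J_1,\ldots,J_i\}$. Counting $\Theta$-classes then shows that $\Phi$ is an isometric embedding $Y \times [0,n] \hookrightarrow M$. Its two ends are $\Phi(Y \times \{0\}) = Y$ and $\Phi(Y \times \{n\}) = \pi(Y) = Z$.

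For (iii)$\Rightarrow$(i), an isometric embedding of $Y \times [0,n]$ sends the $\Theta$-classes of the product into $\Theta$-classes of $M$. The $\Theta$-classes of the product that cross $Y \times \{0\}$ are exactly those that cross $Y \times \{n\}$, namely the classes of $Y$ extended along the $[0,n]$ factor. Hence $Y$ and $Z$ are crossed by the same $\Theta$-classes of $M$. One must check that distinct classes of the product cannot merge in $M$ in a way that breaks this correspondence; this is harmless, since the argument only needs that each class crossing $Y$ also crosses $Z$ and vice versa.

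I expect the main obstacle to be (ii)$\Rightarrow$(iii). The delicate points are establishing that every $y \in Y$ is separated from $\pi(y)$ by exactly the same set $\mathcal{S}$, and proving that the constructed product map is isometric.
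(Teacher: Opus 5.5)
Your argument is correct in substance, but it takes a different, self-contained route from the paper. The paper outsources the nontrivial steps:
\begin{itemize}
\item (i)$\Rightarrow$(iii) is quoted from the literature;
\item (iii)$\Rightarrow$(ii) is declared clear;
\item (ii)$\Rightarrow$(i) rests on a cited fact: if an edge of $Y$ projects onto an edge of $Z$, the two edges lie in the same $\Theta$-class.
\end{itemize}
You instead run the cycle (i)$\Rightarrow$(ii)$\Rightarrow$(iii)$\Rightarrow$(i) entirely through ``distance equals the number of separating $\Theta$-classes'', gates, and medians. Your median map $\Phi(y,i)=\mu(y,\pi(y),p_i)$ reproves the cited product lemma and gives an explicit formula for the embedding. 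Your (iii)$\Rightarrow$(i), which uses that isometric embeddings preserve $\Theta$, replaces the paper's (iii)$\Rightarrow$(ii). The paper's version is shorter; yours needs no external references.

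Two places need tightening.

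First, your initial candidate for $\Phi$, the gate of $y$ on the convex hull of $Y\cup\{p_i\}$, is degenerate. Since $y$ already lies in that hull, it gives $\Phi(y,i)=y$ for every $i$. Drop it and keep only the median definition.

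Second, and more important, you state without proof that (ii) ``forces every class crossing $Y$ to cross $Z$''. This is precisely the content of the paper's (ii)$\Rightarrow$(i). It does follow from your own observation in (i)$\Rightarrow$(ii), which did not use (i):
\begin{itemize}
\item If $\{a,b\}\in J$ is an edge of $Y$, then $\pi(a)\neq\pi(b)$ by injectivity.
\item The $\Theta$-class of $\{\pi(a),\pi(b)\}$ separates $\pi(a)$ from $\pi(b)$, hence separates $a$ from $b$, hence equals $J$.
\item Consequently, a class separating a vertex $y\in Y$ from $Z$ does not cross $Z$, so it cannot cross $Y$, so it belongs to $\mathcal{S}$. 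This gives $\mathcal{H}(y|\pi(y))=\mathcal{S}$ for all $y\in Y$.
\end{itemize}
What you say you ``verify'' is only the inclusion $\mathcal{S}\subseteq\mathcal{H}(y|\pi(y))$, which is trivial by the definition of $\mathcal{S}$. The equality $\mathcal{H}(y|\pi(y))=\mathcal{S}$ is what the majority computation actually uses. That computation gives $\mathcal{H}(y_0|\Phi(y,i))=\mathcal{H}(y_0|y)\cup\{J_1,\dots,J_i\}$, together with $\Phi(y,0)=y$ and $\Phi(y,n)=\pi(y)$.

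The same identity, established under (i) by the same reasoning, also underlies your surjectivity argument $\pi\circ\pi'=\mathrm{id}_Z$ in (i)$\Rightarrow$(ii). There you use it without stating it, so it should be made explicit.
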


\begin{proof}
For the implication $(i) \Rightarrow (iii)$, see for instance \cite[Lemma~1.7]{MR4093881}. The implication $(iii) \Rightarrow (ii)$ is clear. Now, assume that $(ii)$ holds. If $J$ is a $\Theta$-class crossing $Z$, then there exists an edge $\{a,b \} \in E(Z)$ that belongs to $J$. Since the gate-projection to $Z$ induces a graph-isomorphism $Y \to Z$, there must be an edge $\{a',b'\}$ in $Y$ that projects to $\{a,b\}$. As a consequence of \cite[Proposition~2.6]{MR4071367}, the $\Theta$-class of $\{a',b'\}$ coincides with $J$. A fortiori, $J$ crosses $Y$. Conversely, if $J$ crosses $Y$ and if $\{p,q\}$ is an edge of $Y$ that belongs to $J$, then the edge of $Z$ on which $\{p,q\}$ projects must belong to $J$ again according to \cite[Proposition~2.6]{MR4071367}. A fortiori, $J$ crosses $Z$ as well. Thus, we have proved that $Y$ and $Z$ are crossed by the same $\Theta$-classes, i.e.\ $Y$ and $Z$ are parallel, as desired. 
\end{proof}

\noindent
It is worth mentioning that in (non-degenerate) median graphs, there is always a lot of parallelism:

\begin{lemma}\label{lem:FibreParallel}
Let $M$ be a median graph and $J$ a $\Theta$-class. The two fibres of $J$ are parallel.
\end{lemma}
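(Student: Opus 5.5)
The plan is to use Theorem~\ref{thm:BigHyp}, which gives the product structure $N(J) \simeq e \times F$ for any edge $e \in J$ and any fibre $F$ of $J$, and then check directly that the two fibres are crossed by the same $\Theta$-classes. Fix an edge $e=\{a,b\} \in J$. Let $F_a$ (resp.\ $F_b$) denote the fibre of $J$ containing $a$ (resp.\ $b$), i.e.\ the intersection of $N(J)$ with the halfspace of $J$ containing $a$ (resp.\ $b$). Both fibres are convex, again by Theorem~\ref{thm:BigHyp}, so the notion of parallelism makes sense for them.

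The key step is as follows. Let $K$ be a $\Theta$-class crossing $F_a$, witnessed by an edge $\{x,y\} \in K$ of $F_a$. Through the isomorphism $N(J) \to e \times F_a$, the vertices $x,y$ correspond to $(a,x),(a,y)$. The vertices $(b,x),(b,y)$ are vertices $x',y'$ of $N(J)$ with $\{x,x'\},\{y,y'\} \in J$, and $\{x',y'\}$ is an edge lying in $F_b$. The four vertices $x,y,y',x'$ then form a $4$-cycle, which is a square since $M$ is bipartite and its diagonals cannot be edges. Therefore $\{x',y'\}$ is opposite to $\{x,y\}$ in a square, and so lies in $K$ (recall that $\Theta$ is the transitive closure of square-opposition). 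Hence $K$ crosses $F_b$. By symmetry, every $\Theta$-class crossing $F_b$ also crosses $F_a$. This gives parallelism.

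There is one detail to handle: ensuring that the image $\{x',y'\}$ indeed lies in $F_b$ and not elsewhere. This follows because $x'$ and $y'$ are separated from $x$ and $y$ by $J$ alone, so they lie in the $b$-halfspace and in $N(J)$. They are joined by an edge not in $J$, since $K \neq J$: a $\Theta$-class crossing a fibre cannot be $J$, as fibres contain no $J$-edges by definition of $\partial J = N(J)\backslash\backslash J$. That edge therefore lies in $\partial J$, and it lies in the component $F_b$.

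The main obstacle is only bookkeeping: identifying the two copies $\{a\}\times F$ and $\{b\}\times F$ in the product decomposition with the two fibres. Alternatively, one could verify criterion (ii) of the preceding proposition instead, namely that the gate-projection $F_a \to F_b$ sends $x$ to its $J$-neighbour $x'$ and is an isomorphism. The argument above, however, works directly with the definition of parallelism.
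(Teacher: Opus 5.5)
Your proof is correct and is essentially an expanded version of the paper's one-line argument. The paper notes that a $\Theta$-class crosses a fibre of $J$ if and only if it crosses $J$, so both fibres are crossed by exactly the $\Theta$-classes crossing $J$; the square $x,y,y',x'$ you build from $N(J)\simeq e\times F$ is what justifies that observation, and you simply transfer edges directly from one fibre to the other instead of passing through the characterisation in terms of $J$.
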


\begin{proof}
A $\Theta$-class of $M$ crosses a fibre of $J$ if and only if it crosses $J$. So the two fibres of $J$ are crossed by the same $\Theta$-classes, namely the $\Theta$-classes crossing $J$. 
\end{proof}

\noindent
We are finally ready to define slidings of median graphs. We refer the reader to Figure~\ref{Sliding} for an explicit example. 
\begin{figure}
\begin{center}
\includegraphics[width=0.7\linewidth]{Sliding}
\caption{Sliding in a median graph.}
\label{Sliding}
\end{center}
\end{figure}

\begin{definition}
Let $M_1, M_2$ be two median graphs, $Y_1 \leq M_1, Y_2 \leq M_2$ two convex subgraphs that we identify thanks to a graph-isomorphism $\varphi : Y_1 \to Y_2$, and $Y_1' \leq M_1$ a convex subgraph parallel to $Y_1$. Let $\pi : Y_1' \to Y_1$ denote the gate-projection. The median graph 
$$M':= M_1 \underset{Y_1'=_{\varphi \circ \pi} Y_2}{\ast} M_2$$
is obtained from 
$$M:= M_1 \underset{Y_1 =_\varphi Y_2}{\ast} M_2$$
by a \emph{sliding}. 
\end{definition}

\noindent
We conclude this section by proving that applying a sliding to a median graph does not change (up to isomorphism) its crossing graph. More precisely:

\begin{prop}\label{prop:MapSliding}
Let $M_1, M_2$ be two median graphs, $Y_1 \leq M_1, Y_2 \leq M_2$ two convex subgraphs that we identify thanks to a graph-isomorphism $\varphi : Y_1 \to Y_2$, and $Y_1' \leq M_1$ a convex subgraph parallel to $Y_1$. Let $\pi : Y_1' \to Y_1$ denote the gate-projection. The unique map 
$$\beta : M:=M_1 \underset{Y_1 =_\varphi Y_2}{\ast} M_2 \to M':= M_1 \underset{Y_1'=_{\varphi \circ \pi} Y_2}{\ast} M_2$$
that sends identically $(V(M_1),E(M_1))$ in $M$ to $(V(M_1),E(M_1))$ in $M'$ and similarly $(V(M_2) \backslash V(Y_2), E(M_2) \backslash E(M_2))$ in $M$ to $(V(M_2) \backslash V(Y_2), E(M_2) \backslash E(M_2))$ in $M'$ satisfies the following assertions:
\begin{itemize}
	\item $\beta$ preserves the $\Theta$-classes, i.e.\ $\beta(a) \Theta \beta(b)$ if and only if $a \Theta b$ for all $a,b \in E(M)$;
	\item moreover, for all $a,b \in E(M)$, the $\Theta$-classes $[\beta(a)]$ and $[\beta(b)]$ cross if and only if $[a]$ and $[b]$ cross. 
\end{itemize}
\end{prop}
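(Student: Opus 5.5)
We need to prove that $\beta$ preserves $\Theta$-classes and crossing. The plan is to describe the $\Theta$-classes of an amalgam along a convex subgraph in terms of the $\Theta$-classes of the two factors, and then check that this description is the same for $M$ and $M'$ once $Y_1$ is replaced by the parallel subgraph $Y_1'$.

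\textbf{Step 1: $\Theta$-classes in an amalgam.} In $M = M_1 \ast_{Y_1=Y_2} M_2$ with $Y_1,Y_2$ convex, both $M_1$ and $M_2$ are convex subgraphs of $M$. Hence $\Theta$ restricted to each $M_i$ is the $\Theta$-relation of $M_i$. Since $\Theta$ is the transitive closure of ``opposite in a square'', and every square of $M$ lies in $M_1$ or in $M_2$ (a square meeting both sides would have to pass through the separating subgraph $Y$, and convexity of $Y$ forces it into one side), we get the following. The $\Theta$-classes of $M$ are the $\Theta$-classes of $M_1$ not crossing $Y_1$, the $\Theta$-classes of $M_2$ not crossing $Y_2$, and the unions $J_1\cup J_2$ where $J_1$ crosses $Y_1$, $J_2$ crosses $Y_2$, and $J_1\cap E(Y_1)$ corresponds to $J_2 \cap E(Y_2)$ under $\varphi$. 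In the same way, two $\Theta$-classes of $M$ cross if and only if they have representatives spanning a square, and such a square lies in $M_1$ or in $M_2$. So crossing in $M$ is the union of crossing in $M_1$ and crossing in $M_2$, transported through this identification.

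\textbf{Step 2: comparison with $M'$.} We then apply Step 1 to $M'$ with $Y_1'$ in place of $Y_1$ and $\varphi\circ\pi$ in place of $\varphi$. The key input is that $Y_1$ and $Y_1'$ are parallel, so they are crossed by the same $\Theta$-classes of $M_1$. Moreover, by the characterisation of parallelism together with the cited fact \cite[Proposition~2.6]{MR4071367}, the gate-projection $\pi$ sends an edge of $Y_1'$ to an edge of $Y_1$ in the same $\Theta$-class of $M_1$. Therefore a class $J_1$ crossing $Y_1'$ is glued in $M'$ to the class $J_2$ of $M_2$ containing $\varphi(\pi(e))$ for $e\in J_1\cap E(Y_1')$. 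This $J_2$ is exactly the class to which $J_1$ is glued in $M$. Since $\beta$ is the identity on the edges of $M_1$ and of $M_2$ (edges inside $Y_2$ are read through $M_1$ in both amalgams, consistently up to $\pi$, which preserves classes), the partition of edges into $\Theta$-classes corresponds under $\beta$. This proves the first assertion.

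\textbf{Step 3: crossing.} By Step 1, $[a]$ and $[b]$ cross in $M$ if and only if their traces in $M_1$ cross in $M_1$ or their traces in $M_2$ cross in $M_2$. The same holds in $M'$ for $[\beta(a)]$ and $[\beta(b)]$, with traces that are literally the same classes of $M_1$ and of $M_2$ by Step 2. Hence the crossing relation is preserved, which is the second assertion.

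The main obstacle is Step 1: showing that no square of the amalgam straddles the two sides, and that $\Theta$ restricted to $M_i$ is exactly $\Theta_{M_i}$. I would argue this via gatedness of $Y$ in $M$: distances between $x \in M_1$ and $y \in M_2$ are $d(x,p)+d(p,y)$ with $p$ the gate, so a $4$-cycle through vertices of $M_1\setminus Y$ and of $M_2\setminus Y$ would need two distinct vertices of $Y$ at distance $2$ joined through both sides, which contradicts the uniqueness of medians.
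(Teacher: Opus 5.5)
Your proof is correct, but it is organised differently from the paper's. The paper works directly with a chain $e_1=a,\dots,e_n=b$ of edges that are successively opposite in squares. It locates the places where the chain leaves $E(M_1)$ and checks that at each such transition $\beta$ still produces two $\Theta$-equivalent edges of $M'$: the $M_2$-edge is now opposite to the edge of $Y_1'$ that projects onto the relevant edge of $Y_1$. For crossing, it takes representatives $a,b$ spanning a square at a common vertex and repairs the single bad configuration $a\in E(Y_1)$, $b\in E(M_2)\setminus E(Y_2)$ by replacing $a$ with its gate-projection on $Y_1'$. The converse implications come from running the same argument for $\beta^{-1}$, by symmetry.

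You instead first establish a structural description of an amalgam along a convex subgraph. Since every square lies in one factor, the $\Theta$-classes of $M$ are those of $M_1$ and $M_2$, with the classes meeting $Y_1$ and $Y_2$ glued along the bijection induced by $\varphi$. Likewise, crossing in $M$ is the union of crossing in $M_1$ and crossing in $M_2$. Invariance under sliding then reduces to one observation: $\varphi\circ\pi$ induces the same bijection of classes as $\varphi$, because $Y_1,Y_1'$ are crossed by the same classes and $\pi$ preserves $\Theta_{M_1}$-classes.

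Your route has three advantages. It makes explicit the fact that every square of the amalgam lies in $M_1$ or in $M_2$, which the paper's case analysis uses silently. It gives both directions of each assertion at once, with no symmetry step. It also yields a reusable statement: $\mathrm{Cross}(M_1\ast_Y M_2)$ is $\mathrm{Cross}(M_1)$ and $\mathrm{Cross}(M_2)$ glued along the classes crossing $Y$. The paper's route is shorter and more hands-on.

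One small correction to your last paragraph. A $4$-cycle $u,p,w,q$ with $p,q\in Y$ and $u,w\notin Y$ does not contradict uniqueness of medians. The contradiction comes from convexity of $Y$: $u$ lies on a geodesic between $p$ and $q$. Equivalently, a vertex outside a gated subgraph cannot have two distinct neighbours in it. You already gave the convexity reason in Step 1, so this is only a matter of citing the right reason.
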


\noindent
We emphasize that, typically, $\beta$ is not a graph-morphism. Indeed, if $x \in V(Y_2)$ and $y \in V(M_2) \backslash V(Y_2)$ are two adjacent vertices in $M$, then $\beta(x)$ and $\beta(y)$ are not adjacent if $Y_1 \neq Y_1'$. 

\begin{proof}[Proof of Propostion~\ref{prop:MapSliding}.]
Assume that $a,b \in E(M)$ belong to the same $\Theta$-class in $M$. Thus, there exists a sequence of edges $e_1:=a, e_2, \ldots, e_{n-1},e_n:=b$ such that $e_i$ and $e_{i+1}$ are opposite edges of some square for every $1 \leq i \leq n-1$. Let $1 \leq i(1) < i(2) < \cdots < i(k) \leq n-1$ denote the indices such that $\{ (e_{i(s)}, e_{i(s)+1}) \mid 1 \leq s \leq k\}$ are the successive pairs of edges such that one belongs to $E(M_1)$ but not the other. For every $1 \leq s \leq k$, the edge of $(e_{i(s)}, e_{i(s)+1})$ that does not belong to $E(M_1)$ is sent by $\beta$ to an edge of $M_2$ that is opposite in some square to an edge of $Y_1'$ in the same $\Theta$-class as the edge of $(e_{i(s)}, e_{i(s)+1})$ that belongs to $E(M_1)$. Thus, $\beta(e_{i(s)})$ and $\beta(e_{i(s)+1})$ belong to the same $\Theta$-class in $M'$. On the other hand, for every $r \notin \{i_1, \ldots, i_k\}$, the edges $\beta(e_r)$ and $\beta(e_{r+1})$ are opposite in some square of $M'$, so we conclude that $\beta(a)=\beta(e_1)$ and $\beta(b)= \beta(e_n)$ belong to the same $\Theta$-class in $M'$. 

\medskip \noindent
We have just proved that $\beta$ sends edges in a common $\Theta$-class to edges in a common $\Theta$-class. The same argument applies to $\beta^{-1}$ by symmetry. Thus, given two edges $a,b \in E(M)$, $\beta(a)$ and $\beta(b)$ belong to the same $\Theta$-class, then $\beta^{-1}(\beta(a))=a$ and $\beta^{-1}(\beta(b))=b$ also belong to the same $\Theta$-class in $M$. In other words, $\beta$ sends two initial edges to a common $\Theta$-class if and only if they are already belong to the same $\Theta$-class. This proves the first claim of our proposition.

\medskip \noindent
Next, let $a,b \in E(M)$ be two edges such that the $\Theta$-classes $[a]$ and $[b]$ cross. Up to replacing $a$ and $b$ with other representatives, we can assume that $a$ and $b$ have a common endpoint and span a square in $M$. The only case where $\beta(a)$ and $\beta(b)$ do not span a square in $M'$ is when, up to switching $a$ and $b$, $a \in E(Y_1)$ and $b \in E(M_2) \backslash E(Y_2)$. But, then, $\beta(b)$ spans a square in $M'$ with an edge $a' \in E(Y_1')$ in the same $\Theta$-class as $\beta(a)$ (namely, the gate-projection of $a$ to $Y_1'$). Therefore, the $\Theta$-classes $[\beta(a)]=[a']$ and $[\beta(b)]$ cross.

\medskip \noindent
We have just proved that $\beta$ sends crossing $\Theta$-classes to crossing $\Theta$-classes. Again, the same argument applies to $\beta^{-1}$ by symmetry. Thus, given two edges $a,b \in E(M)$, if $[\beta(a)]$ and $[\beta(b)]$ cross in $M'$, then $[\beta^{-1}(\beta(a))]= [a]$ and $[\beta^{-1}(\beta(b))]=[b]$ cross in $M$. We conclude, as desired, that $\beta$ sends two $\Theta$-classes to two crossing $\Theta$-classes if and only if they already cross in $M$. 
\end{proof}

\noindent
As an immediate consequence of Proposition~\ref{prop:MapSliding}, we get that:

\begin{cor}\label{cor:SameCrossingSliding}
Let $M$ and $M'$ be two median graphs. If $M'$ can be obtained from $M$ by a sliding, then the crossing graphs of $M$ and $M'$ are isomorphic. 
\end{cor}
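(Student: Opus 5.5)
The corollary follows directly from Proposition~\ref{prop:MapSliding}. The plan is to turn the map $\beta$ into a bijection between $\Theta$-classes that preserves and reflects crossing.

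Write $M = M_1 \ast_{Y_1 =_\varphi Y_2} M_2$ and $M' = M_1 \ast_{Y_1' =_{\varphi\circ\pi} Y_2} M_2$. Let $\beta : M \to M'$ be the map of Proposition~\ref{prop:MapSliding}.

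\emph{Bijectivity of $\beta$ on edges.} The edge set of $M$ is the disjoint union $E(M_1) \sqcup \bigl(E(M_2)\backslash E(Y_2)\bigr)$, and the same decomposition holds for $M'$. The map $\beta$ is the identity on each piece, so it is a bijection $E(M) \to E(M')$.

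\emph{The induced map on $\Theta$-classes.} Define
\[
\bar\beta : V(\mathrm{Cross}(M)) \to V(\mathrm{Cross}(M')), \qquad [a] \mapsto [\beta(a)].
\]
\begin{itemize}
\item $\bar\beta$ is well defined and injective, by the first assertion of Proposition~\ref{prop:MapSliding} ($a\Theta b \iff \beta(a)\Theta\beta(b)$).
\item $\bar\beta$ is surjective, because every edge of $M'$ is of the form $\beta(a)$.
\end{itemize}
Hence $\bar\beta$ is a bijection on vertex sets.

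\emph{Crossing.} By the second assertion of Proposition~\ref{prop:MapSliding}, the classes $\bar\beta([a])$ and $\bar\beta([b])$ cross if and only if $[a]$ and $[b]$ cross. So $\bar\beta$ preserves and reflects adjacency, and is therefore a graph-isomorphism $\mathrm{Cross}(M) \to \mathrm{Cross}(M')$.

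The only point needing care is the bijectivity of $\beta$ on edges, which is immediate from the amalgam description. The substantive work was already done in Proposition~\ref{prop:MapSliding}, so I expect no real obstacle here.
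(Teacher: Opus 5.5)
Your proposal is correct and is exactly the argument the paper has in mind: the paper states the corollary as an immediate consequence of Proposition~\ref{prop:MapSliding}, and you spell out what it leaves implicit, namely that $\beta$ is a bijection on edges and that the induced map on $\Theta$-classes is a bijection preserving and reflecting crossing. The edge decomposition $E(M_1) \sqcup \bigl(E(M_2)\backslash E(Y_2)\bigr)$ you rely on holds for both amalgams because convex subgraphs are induced.
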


\subsection{Proof of Theorem~\ref{thm:IntroMain}}

\noindent
This section is dedicated to the proof of the main result of this paper, namely Theorem~\ref{thm:IntroMain}, which we repeat for convinience:

\begin{thm}\label{thm:SameCrossing}
Two finite median graphs have isomorphic crossing graphs if and only if one can be obtained from the other by a sequence of slidings.
\end{thm}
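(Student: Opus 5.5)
One direction is Corollary~\ref{cor:SameCrossingSliding}: each sliding preserves the crossing graph up to isomorphism, so a sequence of slidings does too. For the converse, I would not compare two arbitrary median graphs directly. Instead I would fix the canonical representative $\mathrm{Simp}(X)$ of $\mathrm{Cross}^{-1}(X)$ and prove the following: every finite median graph $M$ with $\mathrm{Cross}(M)\simeq X$ can be turned into $\mathrm{Simp}(X)$ by a sequence of slidings. A sliding can be reversed by a sliding, since $Y_1$ is parallel to $Y_1'$ and the gate-projections between them are mutually inverse isomorphisms. So this reduction suffices: go from $M$ to $\mathrm{Simp}(X)$, then back from $\mathrm{Simp}(X)$ to $M'$.

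To reach $\mathrm{Simp}(X)$, I would induct on the number of pairs of $\Theta$-classes that are \emph{not} in contact. By Lemma~\ref{lem:PairwiseContact}, if every pair is in contact then $M\simeq \mathrm{Simp}(\mathrm{Cross}(M))=\mathrm{Simp}(X)$. Otherwise, we need a sliding that strictly increases the set of pairs in contact. The first step is to locate an amalgam decomposition of $M$. Take two classes $J_1,J_2$ not in contact. By Corollary~\ref{cor:SeparatingCarriers}, some $\Theta$-class $H$ separates their carriers. Choose such an $H$ extremal, for instance adjacent to the carrier of $J_2$, or more simply any class whose halfspaces $H^-$ and $H^+$ are both non-trivial.

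Cutting along $H$, $M$ is an amalgam. Let $M_1$ be the union of the halfspace $H^-$ and the carrier $N(H)$, and let $M_2=H^+$. They are glued along the fibre $F^+$ of $H$ in $H^+$, so $M=M_1\ast_{F^+}M_2$. Both pieces are convex, and $F^+$ is convex by Theorem~\ref{thm:BigHyp}. This decomposition is degenerate for sliding purposes, so I would use it to build a finer one. Let $C\subseteq F^+$ be the convex subgraph along which the part of $H^+$ beyond $H$ actually attaches. Concretely, write $H^+$ as the carrier side union further pieces, and pick a convex subgraph $Y_2$ of $M_2$ containing $N(J_2)$ that meets $M_1$ in a convex $Y_1$. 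Then slide that piece so that it is reattached along a parallel copy $Y_1'$ chosen closer to $N(J_1)$. The gate-projection of $Y_1$ onto a suitable parallel subgraph near $J_1$ provides $Y_1'$; parallel copies come from Lemma~\ref{lem:FibreParallel} and the product structure of carriers.

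The key check is twofold. First, the sliding must make $J_1,J_2$ in contact, or at least strictly decrease the number of classes separating their carriers. Second, it must not destroy any existing contact. The map $\beta$ of Proposition~\ref{prop:MapSliding} identifies $\Theta$-classes of $M$ and $M'$, and contacts inside $M_1$ and inside $M_2$ survive. Only contacts realised at vertices of $Y_1$ between classes from both sides might be lost. I would handle this by choosing $Y_1'$ so that the classes in contact at $Y_1$ are still in contact along $Y_1'$. By the Helly argument of Corollary~\ref{cor:ContactDegree}, those contacts are witnessed through the carriers of the crossing classes, which are parallel.

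The main obstacle is this choice. We need a decomposition $M=M_1\ast_{Y_1}M_2$ and a parallel $Y_1'$ such that sliding is monotone in the contact relation. If the direct approach fails, I would use a cruder potential. Sliding $M_2$ along a fibre of $H$ towards the other fibre is possible when $Y_1$ lies in a fibre-like product region. This lowers the total quantity $\sum_{\{J,J'\}}(\text{number of classes separating } N(J),N(J'))$. I would prove that this sum strictly decreases by tracking distances between carriers through Proposition~\ref{prop:HypSepConvex}, and since the sum is a non-negative integer, the process terminates at a graph with all classes pairwise in contact, namely $\mathrm{Simp}(X)$.
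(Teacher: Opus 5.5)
Your easy direction is correct. So is the reduction to showing that every $M$ can be slid to $\mathrm{Simp}(X)$. Your remark that a sliding is undone by a sliding, because gate-projections between parallel convex subgraphs are mutually inverse, makes explicit something the paper uses tacitly. The gap is the inductive step: you never produce a specific sliding and never prove that anything decreases.

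The ``finer'' amalgam in your primary route is not defined. Picking an arbitrary convex $Y_2\supseteq N(J_2)$ that meets $M_1$ in a convex set does not give a splitting $M=M_1\ast_{Y_1}M_2$. For that you need the two pieces to cover $M$, intersect exactly in $Y_1$, and have no edges between $M_1\setminus Y_1$ and $M_2\setminus Y_1$. The parallel copy $Y_1'$ is never specified, and both of your ``key checks'' are left open, as you acknowledge.

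Your fallback names the right potential. By Proposition~\ref{prop:HypSepConvex}, your sum equals $\sum d(N(J),N(J'))$, which is exactly the complexity $\kappa$ the paper uses. However, you attach it to a sliding you only describe conditionally, and you assert the strict decrease without showing it.

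The missing observation is that the decomposition you set aside as degenerate is the one to use. In $M=(H^-\cup N(H))\ast_{F^+}H^+$, the other fibre $F^-$ lies in $M_1$ and is parallel to $F^+$ (Lemma~\ref{lem:FibreParallel}). So regluing $H^+$ along $F^-$ through the gate-projection $\pi:F^-\to F^+$ is a genuine sliding; this is the paper's ``sliding along an oriented hyperplane''. The condition you worried about, that $Y_1$ lie in a fibre-like product region, holds automatically, since $Y_1=F^+$ is a fibre.

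Take $H$ separating $N(J_1)\subseteq H^-$ from $N(J_2)\subseteq H^+$, and match $\Theta$-classes via Proposition~\ref{prop:MapSliding}. The estimate then takes two steps:
\begin{itemize}
\item If $H$ does not separate $N(A)$ and $N(B)$, a shortest path between them does not cross $H$ (Proposition~\ref{prop:HypSepConvex}). It therefore lies in one of the two pieces and survives with the same length.
\item If $H$ separates them, a shortest path uses exactly one $H$-edge $\{a,b\}$ with $a\in F^-$ and $b\in F^+$. Since $\pi(a)=b$, the new gluing identifies $a$ with $b$, so deleting that edge leaves a strictly shorter path.
\end{itemize}
The pair $(J_1,J_2)$ is of the second kind, so $\kappa$ strictly drops. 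When $\kappa=0$, Lemma~\ref{lem:PairwiseContact} gives $\mathrm{Simp}(X)$.

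Incidentally, one can check that this sliding never destroys a contact. So your contact-counting induction could also be completed with it, by iterating until $J_1$ and $J_2$ touch.
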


\noindent
We start by description a particular family of slidings. In a median graph, given a hyperplane $J$, an \emph{orientation} $\vec{J}$ refers to an ordered pair $(J^+,J^-)$ where $J^\pm$ denote the halfspaces delimited by $J$. Accordingly, we use the notation $\partial_\pm J:= \partial J \cap J^\pm$ for the fibres of $J$. Since we know from Lemma~\ref{lem:FibreParallel} that the fibres of $J$ are parallel, it is possible to slide the halfspaces delimited by $J$ along $\vec{J}$. More formally:

\begin{definition}
Let $M$ be a median graph and $\vec{J}$ an oriented hyperplane. Our graph $M$ decomposes as 
$$(J^+ \cup N(J)) \underset{\partial_- J}{\ast} J^-.$$
The \emph{sliding of $M$ along $\vec{J}$} is
$$(J^+ \cup N(J)) \underset{\partial_+J =_\pi \partial_-J}{\ast} J^-$$
where $\pi : \partial_+J \to \partial_-J$ is the gate-projection. 
\end{definition}

\noindent
We are now ready to prove our theorem. 

\begin{proof}[Proof of Theorem~\ref{thm:SameCrossing}.]
In order to prove the theorem, it suffices to verify that, given a finite median graph $M$, one can always obtain the simplex graph of $\mathrm{Cross}(M)$ by applying a sequence of slidings to $M$. For this, we introduce the complexity
$$\kappa(M):= \sum\limits_{J_1,J_2 \text{ $\Theta$-classes}} d(N(J_1),N(J_2)).$$
If $\kappa(M)=0$, then the hyperplanes in $M$ are pairwise in contact, and it follows from Lemma~\ref{lem:PairwiseContact} that $M$ already coincides with $\mathrm{Simp}(\mathrm{Cross}(M))$, so there is nothing to do. Thus, from now on, we assume that $\kappa(M) \geq 1$. Our goal is to apply to $M$ a sliding along a well-chosen oriented hyperplane in order to create a new median graph $M'$ satisfying $\kappa(M')< \kappa(M)$. Iterating this construction yields a sequence of median graphs with smaller and smaller complexity, all with the same crossing graph according to Corollary~\ref{cor:SameCrossingSliding}. Since our complexity is an integer, such a sequence has to eventually stop, which happens precisely when $\mathrm{Simp}(\mathrm{Cross}(M))$ is reached, as desired. 

\medskip \noindent
Now, fix two $\Theta$-classes $J_1$ and $J_2$ not in contact (which exist since $\kappa(M)\geq 1$). According to Corollary~\ref{cor:SeparatingCarriers}, there exists some $\Theta$-class $K$ separating $N(J_1)$ and $N(J_2)$. Orient $K$ as $\vec{K}:= (K^+,K^-)$ where $K^+$ denotes the halfspace delimited by $K$ that contains $N(J_1)$. We claim that $\mathrm{Slid}(M,\vec{K})$ has a smaller $\kappa$-complexity than $M$. 

\medskip \noindent
Indeed, let $\beta : M \to \mathrm{Slid}(M,\vec{K})$ denote the map given by Proposition~\ref{prop:MapSliding}. Since $\beta$ induces a bijection from the set of $\Theta$-classes of $M$ onto the set of $\Theta$-classes of $\mathrm{Slid}(M,\vec{K})$, we have
$$\kappa(\mathrm{Slid}(M,\vec{K})) = \sum\limits_{H_1,H_2 \text{ $\Theta$-classes of } M} d( N(\beta(H_1)),N(\beta(H_2))).$$
Notice that, if $H_1$ and $H_2$ are two $\Theta$-classes of $M$ whose carriers are not separated by $K$, then $d(N(\beta(H_1)), N(\beta(H_2))) \leq d(N(H_1), N(H_2))$. Indeed, if $\alpha$ denotes a path of minimal length connecting $N(H_1)$ to $N(H_2)$ in $M$, then, due to the fact that $K$ does not separate $N(H_1)$ and $N(H_2)$, we know from Proposition~\ref{prop:HypSepConvex} that $K$ does not cross $\alpha$. Thus, $\alpha$ is contained in $K^-$ or $K^+$, which implies that $\beta(\alpha)$ yields a path of the same length as $\alpha$ connecting $N(\beta(H_1))$ to $N(\beta(H_2))$. This proves the desired inequality. Then, assume that $K$ separates $N(H_1)$ and $N(H_2)$, say $N(H_1) \subseteq K^+$ and $N(H_2) \subseteq K^-$. We claim that $d(N(\beta(H_1)),N(\beta(H_2)))< d(N(H_1),N(H_2))$. Since we know that there exist such pairs of $\Theta$-classes whose carriers are separated by $K$, namely $J_1$ and $J_2$, we will conclude that
$$\begin{array}{lcl} \displaystyle \kappa(\mathrm{Slid}(M,\vec{K})) & = & \displaystyle \sum\limits_{H_1,H_2 \text{ $\Theta$-classes of } M} d( N(\beta(H_1)),N(\beta(H_2))) \\ \\ & < & \displaystyle  \sum\limits_{H_1,H_2 \text{ $\Theta$-classes of } M} d(N(H_1),N(H_2)) = \kappa(M), \end{array}$$
as desired. As before, let $\alpha$ be a path of minimal length connecting $N(H_1)$ to $N(H_2)$. Because $K$ separates $N(H_1)$ and $N(H_2)$, $\alpha$ has an edge in $K$, say $\{a,b\}$ with $a \in K^+$ and $b \in K^-$. Notice that, if $\alpha_1$ denotes the initial subsegment of $\alpha$ up to $a$ and if $\alpha_2$ denotes the final subsegment of $\alpha$ starting from $b$ (i.e.\ $\alpha = \alpha_1 \cup \{a,b\} \cup \alpha_2$), then the final vertex of $\beta(\alpha_1)$ coincides with the initial vertex of $\beta(\alpha_2)$. Hence a path of smaller length connecting $N(\beta(H_1))$ to $N(\beta(H_2))$, proving the desired strict inequality. 
\end{proof}

\section{Graphs with extremal median degrees}

\subsection{Graphs with small median degrees}

\noindent
In this section, we characterise the connected graphs with median degree $\leq 3$.

\begin{prop}\label{prop:SmallMedianDegree}
Let $X$ be a connected graph. The following assertions hold:
\begin{itemize}
	\item $\mathrm{mdeg}(X)=1$ if and only if $X$ is a single vertex;
	\item $\mathrm{mdeg}(X)=2$ if and only if $X$ is single edge;
	\item $\mathrm{mdeg}(X)=3$ if and only if $X$ is a triangle $K_3$ or a star $K_1 \ast K_n$ with $n \geq 2$ arms.
\end{itemize}
\end{prop}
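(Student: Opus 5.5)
The plan is to treat the three items together, proving the forward implications (a value of $\mathrm{mdeg}$ forces the shape of $X$) by analysing median graphs of degree $\leq 3$ with no cut-vertex, and the converse implications by exhibiting explicit median graphs. The standing tool is Proposition~\ref{prop:ConnectedCrossing}: since $X$ is connected, every $M$ with $\mathrm{Cross}(M)\simeq X$ has no cut-vertex. By Proposition~\ref{prop:CutVertexLink}, every vertex of such an $M$ therefore has a connected cubical link.

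For the realisations, first note that $\mathrm{mdeg}(X)\geq 1$ for every $X$, since every $\Theta$-class contains an edge.
\begin{itemize}
\item A single vertex is the crossing graph of a single edge, which has degree $1$.
\item A single edge $K_2$ is the crossing graph of the square $Q_2=\mathrm{Simp}(K_2)$, which has degree $2$.
\item The triangle $K_3$ is the crossing graph of $Q_3$, which has degree $3$.
\item The star $K_1\ast K_n$ is the crossing graph of the ladder $P_n\times K_2$, where $P_n$ is a path with $n$ edges. Its rung class crosses the $n$ path classes, and these $n$ classes pairwise do not cross. The ladder has degree $3$ when $n\geq 2$.
\end{itemize}
It then suffices to prove the upper characterisations ``$\mathrm{mdeg}\leq k\Rightarrow$ shape''; the exact values follow because the lists are disjoint.

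For the degree-$1$ and degree-$2$ cases, the argument is direct.
\begin{itemize}
\item A connected median graph of degree $1$ is a single edge, so $X$ is a single vertex.
\item Let $M$ have degree $2$ and no cut-vertex. Every vertex has a connected cubical link on $\leq 2$ vertices. A vertex of degree $1$ is only possible if $M$ is an edge. Otherwise each vertex has link an edge, so each vertex is the corner of a square using both of its edges. The connected component of $M$ containing this square then closes up as $Q_2$, because the square's vertices have no further neighbours. Hence $X\simeq K_2$.
\end{itemize}

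The case $\deg(M)=3$ is the heart of the proof. If $X$ has $\leq 2$ vertices, then $\mathrm{mdeg}(X)\leq 2$, so we may assume $|V(X)|\geq 3$.

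First, suppose $M$ contains a $3$-cube. Each of its eight vertices already has degree $3$ inside the cube, so connectivity gives $M=Q_3$ and $X=K_3$. Otherwise $M$ has no $3$-cube, and then $X$ is triangle-free. Indeed, three pairwise crossing $\Theta$-classes have pairwise intersecting carriers. By Helly, there is a vertex lying in all three carriers. By Lemma~\ref{lem:InterEdge}, the three edges at that vertex pairwise span squares. The cube condition then produces a $3$-cube, a contradiction.

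It remains to show that a connected triangle-free $X$ arising this way is a star. Equivalently, I will show that $X$ contains no path with three edges $A-B-C-D$ (not necessarily induced). A connected triangle-free graph with no such path is a star.

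For this, I would use the product structure of Theorem~\ref{thm:BigHyp}: $N(B)\simeq e\times F$, where the fibre $F$ is a tree because there is no $3$-cube. Every vertex of $F$ spends one edge on $B$, so $F$ has degree $\leq 2$, i.e.\ $F$ is a path. The classes crossing $B$ are exactly the classes of the edges of this path. Suppose $C$ also crosses some class $D\neq B$ while $A$ crosses $B$. I would take a vertex of $N(B)\cap N(C)$ and look at its three incident edges, in classes $B$, $C$ and a third class. Degree $3$ leaves no room for the square of $C$ with $D$ to attach without either creating a vertex of degree $4$ or forcing $D$ to cross $B$. The latter would create a triangle $B,C,D$ in $X$, which is excluded.

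This local degree-counting step, which rules out the path $A-B-C-D$, is where I expect the main difficulty. To make it rigorous, I would first try to prove that if $C$ crosses both $B$ and $D$ with $B,D$ not crossing, then every vertex of $N(C)$ has its third edge in $B$ or $D$. This would come from the fact that $N(C)\simeq e\times F'$ with $F'$ a path, whose edges alternate among the classes crossing $C$. The next step is to show that, since a path cannot branch, the classes crossing $C$ can only be arranged as the star centred at $C$. A path $A-B-C-D$ would force two branching centres, which should contradict the path structure of both fibres together with the connectedness of the links.
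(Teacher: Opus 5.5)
Your realisations, the degree-$1$ and degree-$2$ cases, the $3$-cube case, the Helly/cube-condition argument that $X$ is triangle-free, and the reduction to excluding a path $A-B-C-D$ in $X$ are all fine. The gap is that last step: it is the heart of the proof, it is only sketched, and it does not work as written. ``Degree $3$ leaves no room'' is not an argument. An arbitrary vertex of $N(B)\cap N(C)$ need not have all its edges inside $N(B)$: an endpoint of the fibre $F$ has only two edges in $N(B)$, and its possible third edge is uncontrolled. Your auxiliary claim is in fact false. In the ladder $P_3\times K_2$, take $C$ the rung class and $B$, $D$ the classes of the two extreme path edges. Then $C$ crosses $B$ and $D$, and $B$ and $D$ do not cross. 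Yet the middle vertices of $N(C)$ carry an edge of the middle path class, which is neither $B$ nor $D$. Also, along the convex path $F'$ each class crossing $C$ labels exactly one edge, so nothing ``alternates''. Finally, your sketch never uses $A$, although it must: without $A$, a path $B-C-D$ occurs in every star with at least two arms.

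The missing idea is to use $A$ to choose the right vertex. Since $A$ and $C$ both cross $B$, the path $F$ has at least two edges, so its $C$-edge has an endpoint $v$ interior to $F$. All three edges at $v$ then lie in $N(B)$, in the classes $B$, $C$ and some $E$ crossing $B$. By triangle-freeness, $E$ does not cross $C$. By Lemma~\ref{lem:InterEdge}, the only edge at $v$ in the fibre of $C$ containing $v$ is its $B$-edge $\{v,v'\}$. The same holds at $v'$, which is interior to the other fibre of $B$ by the product structure. That fibre of $C$ is connected, so it reduces to the single edge $\{v,v'\}$. Hence $B$ is the only class crossing $C$, contradicting that $C$ crosses $D\neq B$. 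With this step your crossing-graph route is complete.

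The paper instead argues directly on $M$. A vertex of degree $3$ has a connected cubical link, which must be a path. The carrier of the middle class $J$ is a strip $K_2\times P_n$ with $n\geq 2$. A corner argument shows $M=N(J)$: a new edge at a corner would give a fourth neighbour to the adjacent vertex. This classifies $M$ outright.
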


\noindent
The first two items will be essentially obvious, but the third item requires the following observation:

\begin{lemma}\label{lem:MedianDegreeThree}
A $1$-connected median graph $M$ has degree $3$ if and only if it is a $3$-cube $Q_3$ or a strip $K_2 \times P_n$ of length $n \geq 2$.
\end{lemma}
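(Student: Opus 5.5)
The easy direction is a direct check. In $Q_3$ every vertex has degree $3$, and $Q_3$ has no cut-vertex. In $K_2 \times P_n$ with $n \geq 2$ (so $P_n$ has at least $3$ vertices), the interior vertices have degree $3$, every vertex lies on a square, and removing one vertex leaves the graph connected. The real work is the converse. Let $M$ be a finite median graph of degree $3$ with no cut-vertex; we read "$1$-connected" as having no cut-vertex. By Proposition~\ref{prop:CutVertexLink}, every cubical link $\mathrm{link}_\square(x)$ is connected. In particular, every vertex of degree at least $2$ has a link that is a connected simplicial complex on at most $3$ vertices. So the link of a degree-$3$ vertex is one of: a path on $3$ vertices, a triangle boundary, or a full $2$-simplex. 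The triangle boundary is impossible: three edges at $x$ that pairwise span squares span a $3$-cube by the cube condition, so the link would be the full simplex. A vertex of degree $1$ is also impossible, since its neighbour would be a cut-vertex (when $M$ has more than $2$ vertices).

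\textbf{Case 1: some vertex $x$ has a full $2$-simplex as link.} Then $x$ is the corner of a $3$-cube $Q \subseteq M$. Every vertex of $Q$ already has degree $3$ inside $Q$, and the degree bound forbids further edges. So $Q$ is a connected component of $M$, hence $M = Q_3$.

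\textbf{Case 2: no $3$-cubes.} Then $M$ is a $2$-dimensional median graph. Every degree-$3$ vertex has link a path $a - b - c$: exactly two squares at $x$, sharing the edge $\{x,b\}$. Every degree-$2$ vertex has link an edge, so it lies in exactly one square. I then consider the $\Theta$-class $J$ of such a middle edge $\{x,b\}$ and aim to show $M = N(J) \simeq K_2 \times F$ with $F$ a fibre. The plan is to show that every vertex of $M$ lies in $N(J)$, or at least that $N(J)$ is a union of components of $M$, by local propagation:
\begin{itemize}
\item For an edge $\{u,v\}\in J$, the squares at $u$ containing $\{u,v\}$ continue the strip.
\item If $u$ has a neighbour $w$ outside $N(J)$, then the edge $\{u,w\}$ must span a square with $\{u,v\}$. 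This uses the link structure: the link at $u$ is connected, so $\{u,w\}$ is linked to some edge at $u$, and one checks it must be the $J$-edge.
\item That square would put $w$ into $N(J)$, a contradiction.
\end{itemize}
Once $M = K_2 \times F$, the fibre $F$ is a median graph, hence has cubical dimension $1$ because $M$ has no $3$-cubes, hence is a tree. Since $\deg M = 3$, the tree $F$ has degree at most $2$, so it is a path $P_n$. Degree exactly $3$ then forces $n \geq 2$.

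The main obstacle is the propagation step. At a vertex $u \in N(J)$ of degree $3$, the link is a path, and the $J$-edge might be an end of that path rather than its middle. Then some edge at $u$ is linked only to a non-$J$ edge, and we must rule this out. I would handle it by taking a vertex $u \in N(J)$ adjacent to a vertex outside $N(J)$, together with the configuration of its two squares. I expect to show that this produces a vertex whose link is disconnected, or a third square at some vertex, forcing degree $4$ or a $3$-cube, both of which are excluded. An alternative I would try first is to argue globally: each square's $\Theta$-classes cross, and the degree bound forces $\mathrm{Cross}(M)$ to be a star whose centre is $J$ (via Corollary~\ref{cor:ContactDegree} applied to contact cliques). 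That would give $M = N(J)$ directly.
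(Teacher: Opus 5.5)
Your outline follows the paper up to the decisive point: the cubical-dimension split, the $Q_3$ case, the link of a degree-$3$ vertex being a path $a-b-c$, and $J$ chosen as the $\Theta$-class of the middle edge $\{x,b\}$. The step that actually proves $M=N(J)$ is only something you ``expect to show'', so there is a genuine gap. As you note yourself, the local link argument cannot do it. At a vertex $u\in N(J)$ the $J$-edge may be an end of the link path, so connectedness of $\mathrm{link}_\square(u)$ alone does not force an outside edge to span a square with the $J$-edge. Your fallback via Corollary~\ref{cor:ContactDegree} does not help either. It only rules out four pairwise in contact $\Theta$-classes, which is far from forcing $\mathrm{Cross}(M)$ to be a star centred at $J$.

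The missing idea is to reverse your order: identify $N(J)$ first, then use the degree bound against that concrete shape.
\begin{itemize}
\item By Theorem~\ref{thm:BigHyp}, $N(J)\simeq K_2\times F$ for a fibre $F$. $F$ is median with no squares (there are no $3$-cubes in $M$) and has degree $\le 2$ (each of its vertices already spends one edge on $J$). So $F$ is a path, of length $n\geq 2$ since $\{x,a\}$ and $\{x,c\}$ lie in the same fibre.
\item Every non-corner vertex of the strip already has degree $3$ inside $N(J)$. So an outside neighbour $w$ can only hang off a corner $u$, whose neighbours in $N(J)$ are $v$ (across $J$) and $u'$ (along $F$).
\item Link-connectedness at $u$ makes $\{u,w\}$ span a square with $\{u,v\}$, which puts $w\in N(J)$, or with $\{u,u'\}$, which is precisely your problematic configuration.
\item In the latter case, the fourth vertex of that square is not in $N(J)$. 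Otherwise $w$, lying on a geodesic between two vertices of the convex subgraph $N(J)$, would be in $N(J)$. Hence that vertex is a fourth neighbour of $u'$.
\item But $u'$ is an interior vertex of $F$ because $n\geq 2$, so it already has degree $3$ in $N(J)$.
\end{itemize}
This contradiction is how the paper closes the argument. It is exactly the degree count at $u'$, unavailable until the strip structure is known, that handles the ``$J$-edge at the end of the link path'' case you flagged.
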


\begin{proof}
Because $M$ is $1$-connected, its cubical dimension has to be $\geq 2$. Indeed, otherwise $M$ would be a tree, but the only $1$-connected trees are $K_1$ and $K_2$, which have degree $\leq 2$. If $M$ has cubical dimension $\geq 3$, then it contains a $3$-cube, say $Q$. Since all the vertices in $Q$ have degree $3$, necessarily $M=Q$. From now on, we assume that $M$ has cubical dimension $2$.

\medskip \noindent
Because $M$ has degree $3$, it must contain some vertex of degree exactly $3$, say $x \in V(M)$. We know from Proposition~\ref{prop:CutVertexLink} that the cubical link of $x$ must be connected, so it is either a triangle or a path $P_2$. In the former case, the $3$-cube condition implies that $M$ contains a $3$-cube, contradicting our assumption that $M$ has cubical dimension $2$, so $\mathrm{link}_\square(x)$ is a path. Let $x_1,x_2,x_3 \in V(M)$ denote the three neighbours of $x$, indexed such that the edge $\{x,x_2\}$ spans a square with both $\{x,x_1\}$ and $\{x,x_3\}$. Let $J$ denote the $\Theta$-class of $\{x,x_2\}$. Each fibre of $J$ is a median graph of degree $\leq 2$ and of cubical dimension $\leq \dim_\square(M)-1=1$, so it has to be a path $P_n$. Notice that $n \geq 2$ since the edges $\{x,x_1\}$ and $\{x,x_3\}$ belong to a common fibre of $J$. Thus, the carrier of $J$ is isomorphic to the strip $K_2 \times P_n$. 

\medskip \noindent
We claim that $M=N(J)$, which will conclude the proof of our lemma. 

\medskip \noindent
\begin{minipage}{0.48\linewidth}
\begin{center}
\includegraphics[width=0.98\linewidth]{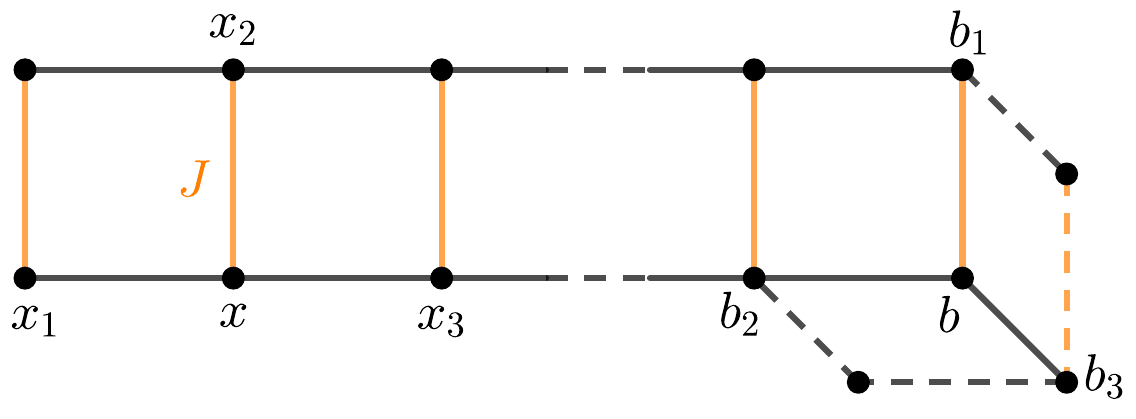}
\end{center}
\end{minipage}
\begin{minipage}{0.5\linewidth}
If not, there exists $a \in V(M)$ adjacent to a vertex $b \in V(N(J))$ but not in $N(J)$. Because $M$ has degree $3$, $b$ must be one of the four corners of the strip $N(J)$. Let $b_1,b_2 \in V(N(J))$ denote the two neighbours of $b$ in $N(J)$, indexed such that $\{b,b_1\} \in J$. Because the cubical link of $b$ must be connected, again according to Proposition~\ref{prop:CutVertexLink}, the edge $\{b,a\}$ must span a square with either $\{b,b_1\}$ or $\{b,b_2\}$. 
\end{minipage}

\medskip \noindent
The former case is not possible, since otherwise $a$ would belong to $N(J)$. But the latter case is not possible either, since otherwise the fourth vertex of the square spanned by $\{b,a\}$ and $\{b,b_2\}$ would provide a fourth neighbour of $b_2$, contradicting the fact that $M$ has degree $3$. 
\end{proof}

\begin{proof}[Proof of Proposition~\ref{prop:SmallMedianDegree}.]
Since the only median graph of degree $1$ (resp.\ $2$) is a single edge (resp.\ square), whose crossing graph is a single vertex (resp.\ edge), the first two items of our proposition follow immediately. The third item follows from the characterisation provided by Lemma~\ref{lem:MedianDegreeThree} of the $1$-connected median graphs of degree $3$ and from the equivalence provided by Proposition~\ref{prop:ConnectedCrossing} between being $1$-connected and having a connected crossing graph. 
\end{proof}

\subsection{Graphs with maximal median degree}

\noindent
In this section, we prove the second main result of this article, namely we characterise exactly when the median degree of a graph $X$ is as large as possible, namely $|V(X)|$. 

\begin{thm}
Let $X$ be a finite graph. The following assertions are equivalent:
\begin{itemize}
	\item[(i)] the equality $\mathrm{mdeg}(X)= |V(X)|$ holds;
	\item[(ii)] there exists a unique median graph whose crossing graph is $X$, namely $\mathrm{Simp}(X)$;
	\item[(iii)]  $X$ has no separating star. 
\end{itemize}
\end{thm}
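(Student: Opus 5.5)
The plan is to prove (i)$\Leftrightarrow$(ii) directly from Proposition~\ref{prop:MaxDegree}, and then to prove (ii)$\Leftrightarrow$(iii) by relating a separating star of $X$ to a median graph in $\mathrm{Cross}^{-1}(X)$ other than $\mathrm{Simp}(X)$.

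By Proposition~\ref{prop:MaxDegree}, every $M \in \mathrm{Cross}^{-1}(X)$ satisfies $\mathrm{deg}(M)\leq |V(X)|$, with equality exactly when $M \simeq \mathrm{Simp}(X)$. Hence $\mathrm{mdeg}(X)=|V(X)|$ holds iff every element of $\mathrm{Cross}^{-1}(X)$ is isomorphic to $\mathrm{Simp}(X)$, which is (ii). Here we restrict to finite median graphs; a median graph with crossing graph $X$ finite has finitely many $\Theta$-classes, hence is finite.

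\emph{(iii)$\Rightarrow$(ii).} Suppose $M \in \mathrm{Cross}^{-1}(X)$ is not a simplex graph. By Lemma~\ref{lem:PairwiseContact}, there are two $\Theta$-classes $J_1,J_2$ that are not in contact. By Corollary~\ref{cor:SeparatingCarriers}, some $\Theta$-class $K$ separates $N(J_1)$ from $N(J_2)$. I claim that the star of $K$ in $X=\mathrm{Cross}(M)$ separates $J_1$ from $J_2$. Note that $J_1,J_2$ do not cross $K$ (their carriers lie in opposite halfspaces), and they do not cross each other since they are not even in contact, so both lie outside the star. Now take a path $J_1=H_0,H_1,\dots,H_n=J_2$ in $X$. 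The first $H_i$ that crosses into $K^-$ must cross $K$, or else some $H_i \subseteq K^+$ crosses some $H_{i+1}\subseteq K^-$. The latter is impossible, because crossing $\Theta$-classes span a common square and so cannot lie in opposite halfspaces of $K$. Hence every such path meets the star of $K$, so the star of $K$ separates $X$, contradicting (iii). Therefore every $M$ is a simplex graph, and $M\simeq \mathrm{Simp}(\mathrm{Cross}(M))=\mathrm{Simp}(X)$.

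\emph{(ii)$\Rightarrow$(iii).} Suppose the star of a vertex $v$ separates $X$. I will build a median graph with crossing graph $X$ that is not $\mathrm{Simp}(X)$. Start from $\mathrm{Simp}(X)$ and the oriented $\Theta$-class $\vec{J_v}$, and perform the sliding along it. Write $X\setminus \mathrm{star}(v) = A\sqcup B$ with $A,B$ nonempty and no edges between $A$ and $B$. By Lemma~\ref{lem:SimpHyp}, the halfspace of $J_v$ not containing $\emptyset$ consists of the cliques containing $v$; these only involve vertices of $\mathrm{star}(v)$. That would move nothing interesting, so the sliding must be chosen more carefully.

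\emph{Main obstacle.} The key step is the construction in (ii)$\Rightarrow$(iii), and I expect it to be the hardest part. The construction I would try first is an amalgam of two simplex graphs. Let $Z=\mathrm{Simp}(X[A\cup \mathrm{star}(v)])$ and $Z'=\mathrm{Simp}(X[B\cup \mathrm{star}(v)])$. Both contain the convex subgraph $\mathrm{Simp}(X[\mathrm{star}(v)])$, and gluing along it gives $\mathrm{Simp}(X)$. Instead, glue $Z$ and $Z'$ along the convex subgraph $C$ of cliques containing $v$. This $C$ is isomorphic to $\mathrm{Simp}(\mathrm{lk}(v))$ and is a fibre of $J_v$ in both graphs, hence convex.

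Next I verify that the amalgam has crossing graph $X$. The $\Theta$-classes of $\mathrm{star}(v)$ get identified in the amalgam. Classes from $A$ and classes from $B$ never share a square, so they do not cross. Crossings within each side are preserved, as in the proof of Proposition~\ref{prop:MapSliding}.

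Finally, pick $a\in A$ and $b\in B$. In the amalgam, the edges $\{\emptyset_Z,\{a\}\}$ and $\{\emptyset_{Z'},\{b\}\}$ lie on opposite sides of $J_v$. Their $\Theta$-classes are not in contact: any common vertex would have to lie in $C$ and be adjacent to both an $a$-edge and a $b$-edge, and by the fibre description in Lemma~\ref{lem:SimpHyp} this forces $a$ and $b$ adjacent to $v$, which is false. So by Lemma~\ref{lem:PairwiseContact} the amalgam is not a simplex graph, and (ii) fails.
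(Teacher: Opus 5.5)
The parts (i)$\Leftrightarrow$(ii) via Proposition~\ref{prop:MaxDegree} and (iii)$\Rightarrow$(ii) are fine; the latter is the paper's argument, cf.\ Fact~\ref{fact:SepStar}. The gap is in the construction for (ii)$\Rightarrow$(iii): your amalgam $Z \ast_C Z'$ does not have crossing graph $X$.

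Your claim that ``the $\Theta$-classes of $\mathrm{star}(v)$ get identified'' holds for the vertices of $\mathrm{lk}(v)$, whose classes cross $C$. It fails for $v$ itself. $C$ is a fibre of $J_v$ in both $Z$ and $Z'$, so no edge of $J_v$ lies in $C$, and nothing merges the two copies of $J_v$. Concretely, $\{\emptyset_Z,\{v\}\}$ and $\{\emptyset_{Z'},\{v\}\}$ are distinct edges sharing the endpoint $\{v\}$, so by Lemma~\ref{lem:InterEdge} they lie in distinct $\Theta$-classes. The crossing graph of your amalgam is $X$ with $v$ replaced by two non-adjacent twins $v,v'$, both adjacent exactly to $\mathrm{lk}(v)$. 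It has $|V(X)|+1$ vertices. For instance, when $X$ is the path on $a,u,v,w,b$, you get the $4$-cycle $u,v,w,v'$ with a pendant vertex $a$ at $u$ and $b$ at $w$.

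The repair is to leave $v$ out of one side.
\begin{itemize}
\item Put $Z'' := \mathrm{Simp}(X[B \cup \mathrm{lk}(v)])$. Its cliques avoiding $B$ form a convex copy of $\mathrm{Simp}(\mathrm{lk}(v))$, being an intersection of halfspaces, and $\mathrm{Simp}(X) = Z'' \ast_{\mathrm{Simp}(\mathrm{lk}(v))} Z$.
\item In $Z$, the subgraphs $\mathrm{Simp}(\mathrm{lk}(v))$ and $C$ are the two fibres of $J_v$, hence parallel by Lemma~\ref{lem:FibreParallel}.
\item So regluing $Z''$ to $C$ via $T \mapsto T \cup \{v\}$ is a sliding, and Corollary~\ref{cor:SameCrossingSliding} gives crossing graph $X$.
\end{itemize}
This is essentially the paper's construction, which uses $\partial B \subseteq \mathrm{lk}(v)$ in place of all of $\mathrm{lk}(v)$.

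Your non-contact argument for $a \in A$, $b \in B$ then goes through unchanged: a common vertex of the two carriers would be some $T \cup \{v\}$ with $T \cup \{v,a\}$ a clique, which is impossible. Here your organisation pays off. Since you already have (i)$\Leftrightarrow$(ii) from Proposition~\ref{prop:MaxDegree}, exhibiting any non-simplex member of $\mathrm{Cross}^{-1}(X)$ suffices. You can therefore skip the paper's explicit degree count for the slid graph.
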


\begin{proof}
If there exist at least two median graphs having $X$ as their crossing graphs, in particular there exists at least one such median graph $M$ that is distinct from $\mathrm{Simp}(X)$. According to Lemma~\ref{lem:PairwiseContact}, $M$ has two $\Theta$-classes $J_1$ and $J_2$ not in contact. We know from Corollary~\ref{cor:SeparatingCarriers} that the carriers of $J_1$ and $J_2$ are separated by some $\Theta$-class $J$. Clearly, along any path connecting $J_1$ to $J_2$ in $\mathrm{Cross}(M)$, one $\Theta$-class must coincide with or cross $J$. In other words, the star of $J$ separates $J_1$ and $J_2$ in $\mathrm{Cross}(M)$, proving that $X$ contains a separating star. Thus, the implication $(iii) \Rightarrow (ii)$ holds.

\medskip \noindent
If there exists a unique median graph whose crossing graph is $X$, this must be $\mathrm{Simp}(X)$, hence
$$\mathrm{mdeg}(X)= \mathrm{deg}(\mathrm{Simp}(X)) = |V(X)|.$$
This proves the implication $(ii) \Rightarrow (i)$. 

\medskip \noindent
Finally, assume that $X$ contains a separating star, say $\mathrm{star}(x)$ for some $x \in V(X)$. Fix a decomposition $V(X)= V(A) \sqcup V(\mathrm{star}(x)) \sqcup V(B)$ where $A$ and $B$ are two non-empty induced subgraphs separated by $\mathrm{star}(x)$ in $X$. 

\medskip \noindent
\begin{minipage}{0.38\linewidth}
\includegraphics[width=0.98\linewidth]{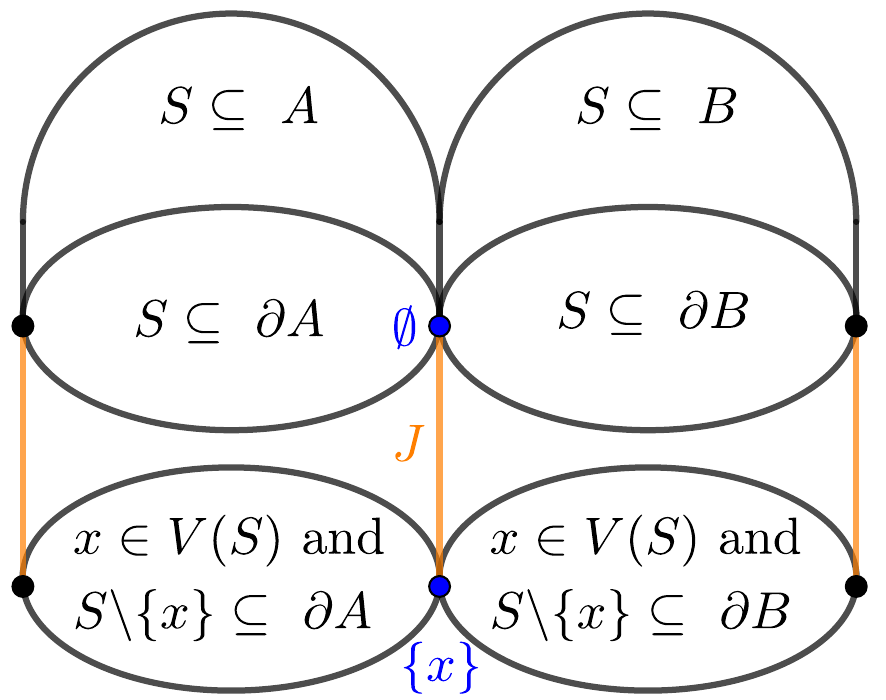}
\end{minipage}
\begin{minipage}{0.6\linewidth}
Then, $\mathrm{Simp}(X)$ can be decomposed as shown by the figure on the left, where $J$ denotes the $\Theta$-class of $\{\emptyset, \{x\}\}$ and where $\partial A$ (resp.\ $\partial B$) denotes the subgraph induced by the vertices not in $A$ (resp.\ $B$) but adjacent to vertices in $A$ (resp.\ $B$). Notice that
$$\{ S \subseteq \partial B\} \text{ and } \{S \mid x \in V(S) \text{ and } S \backslash \{x\} \subseteq \partial B \}$$
are parallel in $\mathrm{Simp}(X)$ as a consequence of Lemmas~\ref{lem:SimpHyp} and~\ref{lem:FibreParallel}. 
\end{minipage}

\medskip \noindent
From the decomposition
$$\mathrm{Simp}(X)= \{ S \mid V(S) \subseteq V(B \cup \partial B) \} \underset{\{S \subseteq \partial B\}}{\ast} \{ S \mid V(S) \nsubseteq V(B)\},$$
we can construct a new median graph $M$ by sliding, namely
$$M:=\{ S \mid V(S) \subseteq V(B \cup \partial B) \} \underset{\{S \subseteq \partial B\} = \{ S \mid x \in V(S) \text{ and } S\backslash \{x\} \subseteq \partial B\}}{\ast} \{ S \mid V(S) \nsubseteq V(B)\}$$
where we use the gluing map
$$\left\{ \begin{array}{ccc} \{S \subseteq \partial B\} & \to &  \{ S \mid x \in V(S) \text{ and } S\backslash \{x\} \subseteq \partial B\} \\ S & \mapsto & S \cup \{x\} \end{array} \right..$$
We claim that $\mathrm{deg}(M)< |V(X)|$, which will prove the implication $(i) \Rightarrow (iii)$ and conclude the proof of our theorem. 

\medskip \noindent
\begin{minipage}{0.38\linewidth}
\includegraphics[width=0.98\linewidth]{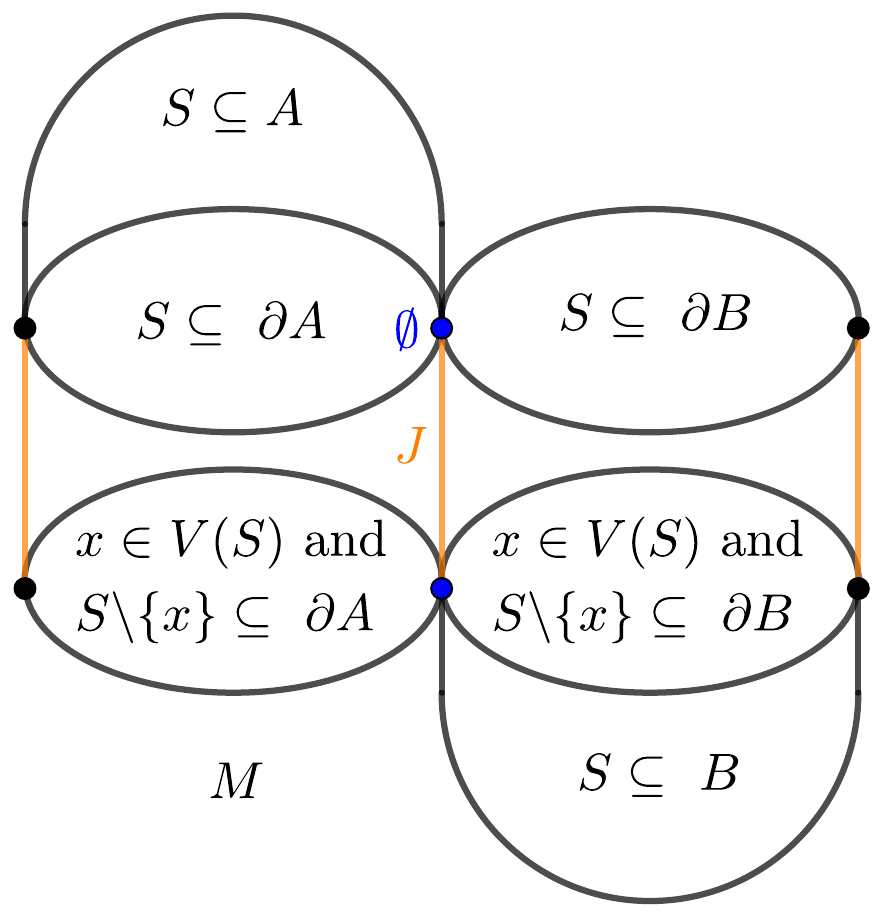}
\end{minipage}
\begin{minipage}{0.6\linewidth}
When passing from $\mathrm{Simp}(X)$ to $M$, the degrees of the vertices in $\{S \subseteq \partial B\}$ do not increase, the degrees of the vertices in $\{ S \mid x \in V(S) \text{ and } S \backslash \{x\} \subseteq \partial B \}$ may increase, and the degrees of the other vertices do not change. Since $\emptyset$ is the only vertex of degree $|V(X)|$ in $\mathrm{Simp}(X)$, it suffices to verify that $\emptyset$ has degree $<|V(X)|$ in $M$ and also that the vertices in $\{ S \mid x \in V(S) \text{ and } S \backslash \{x\} \subseteq \partial B \}$ have degrees $<|V(X)|$ in $M$. The former assertion is clear since, given an arbitrary vertex $b \in V(B)$, $\emptyset$ is adjacent to the singleton $\{b\}$ in $\mathrm{Simp}(X)$ but not longer in $M$. 
\end{minipage}

\medskip \noindent
Now, let $S$ be a clique of $X$ satisfying $x \in V(S)$ and $S \backslash \{x\} \subseteq \partial B$. Its neighbours in $M$ are its neighbours in $\mathrm{Simp}(X)$ and the neighbours of $S \backslash \{x\}$ in $\mathrm{Simp}(X)$ contained in $\{S \subseteq \partial B\}$. In other words, the neighbours of $S$ in $M$ are: the cliques of $X$ of the form $S \cup \{y\}$ and $S \backslash \{y\}$; the cliques of $X$ of the form $(S\backslash \{x\}) \cup \{y\}$ and $(S \backslash \{x,y\})$ that are contained in $\partial B$. Notice that, in all the cases, the vertex $y$ that we add or remove always belongs to $S \cup \partial B \cup \mathrm{link}(x) \subseteq \mathrm{star}(x)$. Therefore, the degree of $S$ in $M$ is $\leq |\mathrm{star}(x)| < |V(X)|$, where the strict inequality follows from the fact that $A$ and $B$ are non-empty subgraphs disjoint from $\mathrm{star}(x)$. 

\medskip \noindent
Thus, we have proved that $M$ has smaller degree than $\mathrm{Simp}(X)$, as desired. 
\end{proof}

\noindent
It is worth mentioning that, even when only one sliding can be performed on the simplex-graph $\mathrm{Simp}(X)$ of some graph $X$, during the process the degree can decrease by an arbitrarily large amount. In other words, satisfying the property that $|\mathrm{Cross}^{-1}(X)|=2$ does not impose any restriction on the median degree compared to the number of vertices $|V(X)|$. Our observation is justified by the following example. 

\begin{ex}
Given an $n \geq 3$, let $A_n$ denote the graph obtained by connected two copies of the complete graph $K_n$ with a path of length $2$. Its simplex-graph $\mathrm{Simp}(A_n)$ can be described as follows. Let $L_4$ denote the graph obtained by gluing four squares $S_1, \ldots, S_4$ such that $S_1 \cap \cdots \cap S_4$ is a single vertex, such that $S_i \cap S_{i+1}$ is a single edge for every $1 \leq i \leq 3$ and such that $S_i \cap S_{i-1}$, $S_i \cap S_{i+1}$ are two consecutive edges along $S_i$ for every $2 \leq i \leq 3$. Then, $\mathrm{Simp}(A_n)$ can be obtained from $L_4$ by gluing two $n$-cubes on the squares $S_1$ and $S_4$. For $n=3$, we have:
\begin{center}
\includegraphics[width=0.7\linewidth]{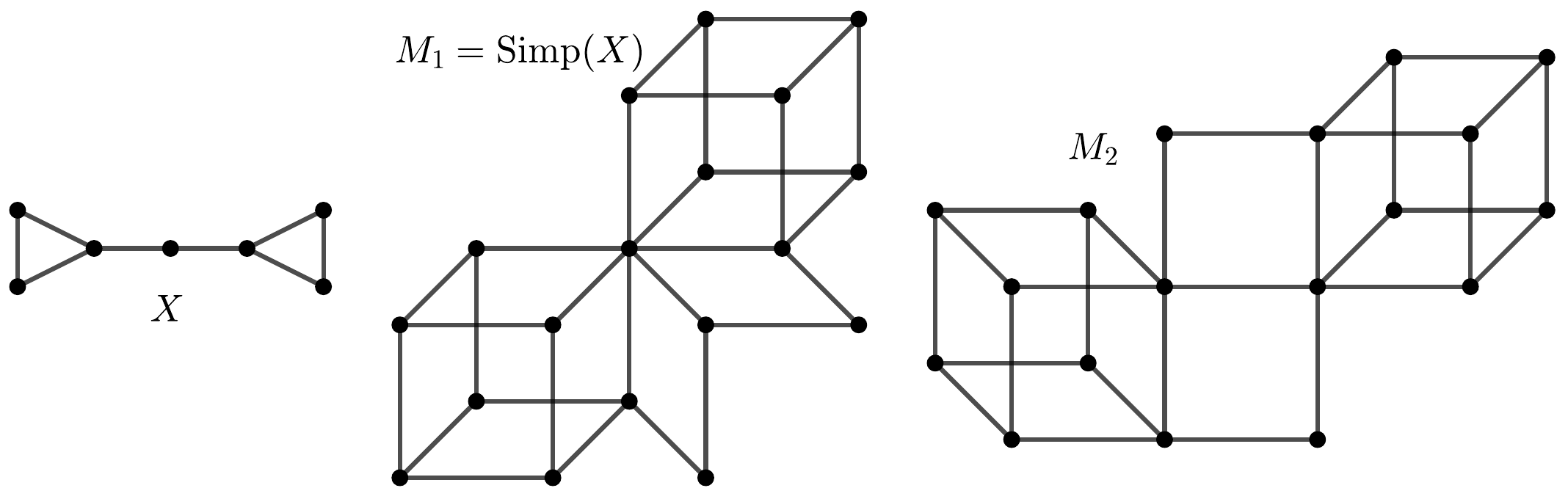}
\end{center}
By checking all the possible slidings that can be applied, we find that exactly two median graphs $M_1:= \mathrm{Simp}(A_n)$ and $M_2$ have $A_n$ as their crossing graph. In order to describe $M_2$, let $Z_4$ denote the graph obtained by gluing four squares $S_1, \ldots, S_4$ such that $S_1 \cap \cdots \cap S_4$ is empty, such that $S_i \cap S_{i+1}$ is a single edge for every $1 \leq i \leq 3$ and such that $S_i \cap S_{i-1}$, $S_i \cap S_{i+1}$ are two consecutive edges along $S_i$ for every $2 \leq i \leq 3$. Then, $M_2$ can be obtained from $Z_4$ by gluing two $n$-cubes on the squares $S_1$ and $S_4$. We conclude that $|\mathrm{Cross}^{-1}(A_n)|=2$ but $\mathrm{mdeg}(A_n)= n+2$ (compared to $|V(A_n)|= 2n+1$). 
\end{ex}

\section{Examples of median degrees}\label{section:Example}

\noindent
In this section, we record some computations of median degrees for a few explicit examples of graphs. We start with grids:

 \begin{prop}\label{prop:Grid}
Let $n \geq m \geq 0$ be two integers. 
$$\mathrm{mdeg}([0,n] \times [0,m]) = \left\{ \begin{array}{cl} 1 & \text{if } m=n=0 \\ 2 & \text{if } (m,n)=(0,1) \\ 3 & \text{if } (m,n)=(0,2) \\ 4 & \text{if } m=0 \text{ and } n \geq 3 \\ 4 & \text{if } m=1 \text{ and } n \geq 1 \\ 7 & \text{if } m=n=2 \\ 8 & \text{if } m=2 \text{ and } n \geq 3 \\ mn-4 & \text{if } m,n  \geq 3 \end{array} \right..$$
\end{prop}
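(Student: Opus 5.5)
The plan is to prove matching lower and upper bounds, treating the generic case $m,n\geq 3$ first and the small cases afterwards. The main tool for the lower bound is Corollary~\ref{cor:ContactDegree}: it suffices to exhibit, in every median graph $M$ with $\mathrm{Cross}(M)\simeq X$, a large family of pairwise in contact $\Theta$-classes. To get such a family I will reuse the argument from the proof of the theorem on maximal median degree. Suppose two $\Theta$-classes $J_1,J_2$ of $M$ are not in contact. Corollary~\ref{cor:SeparatingCarriers} gives a $\Theta$-class $K$ separating their carriers, so the star of $K$ separates $J_1$ from $J_2$ in $X$. Hence two vertices of $X$ that cannot be separated by any star always correspond to $\Theta$-classes in contact.

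In the grid with $m,n\geq 3$, a star is a ``plus'' shape of at most $5$ vertices. I will check by a short case analysis (corner, boundary and interior centres) that the only vertices a star can cut off are corner vertices. For instance, the star of the diagonal neighbour $(1,1)$ of the corner $(0,0)$ contains both neighbours of $(0,0)$, and so isolates it. By contrast, the complement of a star always has one component containing all non-corner vertices outside the star. A small extra check is that two non-corner vertices are never separated, even when one of them lies in the star. Therefore the non-corner vertices give pairwise in contact $\Theta$-classes, and Corollary~\ref{cor:ContactDegree} yields $\deg(M)\geq |V(X)|-4$, which is the value in the statement (with the paper's indexing convention for the grid).

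For the upper bound, I start from $\mathrm{Simp}(X)$, where $\emptyset$ has degree $|V(X)|$, and perform four slidings as in the proof of $(i)\Rightarrow(iii)$. Each sliding uses the separating star of the diagonal neighbour of one corner, with $B$ the corner itself. Each such sliding removes the edge from $\emptyset$ to the singleton of that corner. The same degree count as in that proof shows that the vertices whose degree may increase have degree at most $|\mathrm{star}| \leq 5$. I then need to check that the four slidings are performed in disjoint regions, so that they commute and do not interact, and that no other vertex ends up with degree above $|V(X)|-4$. Vertices $S\neq\emptyset$ of $\mathrm{Simp}(X)$ are cliques with at most two vertices, and their degree is bounded by the size of a neighbourhood, which is at most $5$. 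So for large grids the maximum degree is attained at $\emptyset$, and the result is $|V(X)|-4$.

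The small cases are handled separately. The three smallest paths are covered by Proposition~\ref{prop:SmallMedianDegree}. For longer paths and the ladders ($m=1$), the lower bound $4$ again follows from Proposition~\ref{prop:SmallMedianDegree} and Lemma~\ref{lem:MedianDegreeThree}, since these graphs are neither $K_3$ nor stars. The upper bound $4$ comes from an explicit construction obtained by repeated slidings, for example a chain of squares glued so that each vertex sees at most four classes. The $m=2$ cases ($7$ and $8$) need a separate count, because stars can then also separate non-corner vertices and the generic analysis changes. I expect the main obstacle to be precisely this bookkeeping of exactly which pairs of vertices can be separated by stars in the small and thin grids, together with checking that the explicit slid graphs attain the bound. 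For the generic case, the delicate point is verifying that the four corner slidings do not interfere with each other.
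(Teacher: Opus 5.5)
Your treatment of the generic case $m,n\geq 3$ is essentially the paper's. The lower bound is Lemma~\ref{lem:NotStarSeparated}, which you re-derive, applied to the non-corner vertices. Your four corner slidings produce exactly the paper's graph: $\mathrm{Simp}(C)$ with a $(2\times 1)$-ladder glued along each path $\{a_1,a_{12}\},\{a_{12}\},\{a_2,a_{12}\}$. The paper builds this graph directly, which spares it the check that the slidings do not interfere. Your value $|V(X)|-4=(m+1)(n+1)-4$ is the correct one; the $mn-4$ in the statement is an indexing slip. One correction: after the sliding at the corner $a$, the vertex $\{a_{12}\}$ acquires the new neighbour $\{a\}$ and has degree $6$, not $\leq 5$. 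The count you borrow from the proof of $(i)\Rightarrow(iii)$ overlooks the new neighbours $(S\backslash\{x\})\cup\{b\}$ with $b\in V(B)$. This is harmless, since $6<12\leq |V(X)|-4$.

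The genuine gap is the case $m=2$, which you leave open, and where your only lower-bound tool falls short. In the $3\times 3$ grid the only separating star is that of the centre $c_0$, and it separates the four corners pairwise. So a set of pairwise non-star-separated vertices has at most $5+1=6$ elements, and ``non-separated implies in contact'' gives only $\mathrm{mdeg}\geq 6$. The missing idea is the one in Lemma~\ref{lem:UniqueStar}:
\begin{itemize}
\item the four corner classes neither equal nor cross the central class $J$, so by pigeonhole two of them lie in the same halfspace of $J$;
\item by Fact~\ref{fact:SepStar}, a class separating two carriers must have a separating star avoiding both vertices, and here that forces it to be $J$;
\item hence those two corner classes are in contact with each other and with the five classes of $\mathrm{star}(c_0)$, giving $7$ pairwise in contact classes.
\end{itemize}

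You also give no realisations for the upper bounds outside the generic case.
\begin{itemize}
\item For the $3\times 3$ grid, a single sliding as in the proof of $(i)\Rightarrow(iii)$ works, with $x=c_0$ and $B$ consisting of two corners. Then $\emptyset$ loses two neighbours and $\{c_0\}$ gains two, so the degree drops to $7$.
\item For $m=2$, $n\geq 3$ you need an explicit $8$-element non-star-separated set, for instance $\mathrm{star}(u)\cup\mathrm{star}(v)$ for two adjacent interior vertices $u,v$ of the middle row. You also need an explicit median graph of degree $8$; you provide neither.
\item For ladders, a chain of squares, each glued to the previous one along an edge, cannot work. Such a chain of $k$ squares has $k+1$ $\Theta$-classes and $k$ crossing pairs, so its crossing graph is a tree. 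You need a genuinely two-dimensional piece, such as the band of squares $[i,i+1]\times[j,j+1]$ of $\mathbb{Z}^2$ with $|i-j|\leq 1$. It has degree $4$, and its crossing graph is the ladder.
\end{itemize}
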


\noindent
In order to prove the proposition, the following couple of inequalities will be needed:

\begin{lemma}\label{lem:NotStarSeparated}
Given a graph $X$, 
$$\mathrm{mdeg}(X) \geq \max \{ |V(Y)| \mid Y \leq X \text{ finite not separated by a star in } X\}.$$
\end{lemma}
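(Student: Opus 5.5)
We fix a median graph $M$ with $\mathrm{Cross}(M) \simeq X$ and a finite subgraph $Y \leq X$ not separated by a star in $X$. Viewing the vertices of $Y$ as $\Theta$-classes of $M$, we show that they are pairwise in contact. Corollary~\ref{cor:ContactDegree} then gives $\mathrm{deg}(M) \geq |V(Y)|$. Taking the infimum over $M$ and the maximum over $Y$ yields the inequality.

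The key step is to show that two $\Theta$-classes $J_1,J_2$ corresponding to vertices of $Y$ are in contact, arguing by contradiction as in the proof of the implication $(iii)\Rightarrow(ii)$ of the previous theorem. Suppose $J_1$ and $J_2$ are not in contact. By Corollary~\ref{cor:SeparatingCarriers}, some $\Theta$-class $J$ separates $N(J_1)$ from $N(J_2)$. Then $J \neq J_1,J_2$, and $J$ crosses neither $J_1$ nor $J_2$: a class crossing $J$ has its carrier meeting both halfspaces of $J$, but $N(J_1)$ lies entirely in one halfspace.

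Now take any path $J_1=H_0,H_1,\ldots,H_k=J_2$ in $\mathrm{Cross}(M)$. The carriers $N(H_i)$ and $N(H_{i+1})$ intersect, since crossing classes span a common square. So if no $H_i$ equals or crosses $J$, each carrier $N(H_i)$ lies in a single halfspace of $J$, and consecutive carriers lie in the same halfspace. Hence $N(J_1)$ and $N(J_2)$ would lie in the same halfspace, a contradiction. Therefore the star of $J$ in $X$ separates $J_1$ from $J_2$, and neither $J_1$ nor $J_2$ belongs to this star.

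The only delicate point is matching this with the meaning of ``$Y$ not separated by a star in $X$''. I read it as: no star $\mathrm{star}(v)$ of $X$ has two vertices of $Y \backslash \mathrm{star}(v)$ in different components of $X \backslash \mathrm{star}(v)$. Under this reading, the configuration just obtained (two vertices of $Y$ outside $\mathrm{star}(J)$, lying in distinct components of its complement) is exactly what is forbidden. So the $\Theta$-classes of $Y$ are pairwise in contact, and the conclusion follows. I expect no real obstacle beyond fixing this definition precisely; the Helly-type step is already contained in Corollary~\ref{cor:ContactDegree}.
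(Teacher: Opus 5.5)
Your proposal is correct and follows the paper's proof essentially step for step: you take two $\Theta$-classes of $Y$ not in contact, obtain a separating class $J$ from Corollary~\ref{cor:SeparatingCarriers}, argue along paths in $\mathrm{Cross}(M)$ that the star of $J$ separates the two vertices (this is the paper's Fact~\ref{fact:SepStar}), and conclude with Corollary~\ref{cor:ContactDegree}. You also spell out two points the paper leaves implicit: why $J_1,J_2$ lie outside the star of $J$, and why the carriers of consecutive classes that neither equal nor cross $J$ stay in a single halfspace.
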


\begin{proof}
Fix a finite subgraph $Y \leq X$ such that none of its vertices are separated by some star in $X$. Let $M$ be a median graph of degree $\mathrm{mdeg}(X)$ with crossing graph isomorphic to $X$. The vertices of the finite subgraph $Y$ yield finitely many $\Theta$-classes in $M$, say $J_1, \ldots, J_n$ with $n:= |V(Y)|$. If these $\Theta$-classes are pairwise in contact, then it follows from Corollary~\ref{cor:ContactDegree} that
$$\mathrm{mdeg}(X) \geq \mathrm{deg}(M) \geq n = |V(Y)|,$$
as desired. Now, assume that $J_1, \ldots, J_n$ are not pairwise in contact, say $J_1$ and $J_2$ are not in contact. According to Corollary~\ref{cor:SeparatingCarriers}, there must exist some $\Theta$-class $J$ that separates the carriers of $J_1$ and $J_2$. Our $\Theta$-classes $J,J_1,J_2$ correspond to three vertices $x,x_1,x_2$ in $X$. We claim that the star of $x$ separates $x_1$ and $x_2$. Indeed, a path in $X$ connecting $x_1$ to $x_2$ yields a sequence of successively crossing $\Theta$-classes connecting $J_1$ and $J_2$. Because $J$ separates the carriers of $J_1$ and $J_2$, necessarily at least one $\Theta$-class along this sequence must coincide with or cross $J$. In other words, our path in $X$ connecting $x_1$ to $x_2$ has to pass through the star of $x$. In other words, we have proved the following observation, which we record for future use:

\begin{fact}\label{fact:SepStar}
If two $\Theta$-classes $A$ and $B$ are not in contact, then the vertices they represent in the crossing graph are separated by the star of any vertex representing a $\Theta$-class separating $A$ and $B$.
\end{fact}

\noindent
Since we have assumed that no star of $X$ separates two vertices in $Y$, this second case cannot occur. 
\end{proof}


\begin{lemma}\label{lem:UniqueStar}
Let $X$ be a locally finite graph. Assume that there exists a unique vertex $x \in V(X)$ whose star separates $X$. Then,
$$\mathrm{mdeg}(X) \geq |\mathrm{star}(x)| + \frac{1}{2} \# \{\text{connected components of } X \backslash \mathrm{star}(x)\}.$$
\end{lemma}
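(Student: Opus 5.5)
We will fix a median graph $M$ of degree $\mathrm{mdeg}(X)$ with $\mathrm{Cross}(M) \simeq X$. We will then find a family of pairwise in contact $\Theta$-classes of size at least $|\mathrm{star}(x)| + \frac{1}{2}k$, where $k$ is the number of connected components of $X \backslash \mathrm{star}(x)$, and conclude by Corollary~\ref{cor:ContactDegree}. Let $J$ denote the $\Theta$-class corresponding to $x$. The basic tool is Fact~\ref{fact:SepStar}. If two $\Theta$-classes $A,B$ are not in contact, some $\Theta$-class $H$ separates their carriers (Corollary~\ref{cor:SeparatingCarriers}). The star of the vertex representing $H$ then separates the vertices representing $A$ and $B$, so it is a separating star. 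By uniqueness, that vertex must be $x$, i.e.\ $H=J$. Hence \emph{any two $\Theta$-classes not in contact have their carriers separated by $J$}.

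Next we locate the classes relative to $J$. A $\Theta$-class equal to $J$ or crossing $J$ has a carrier meeting $N(J)$ on both sides of $J$. Therefore $J$ cannot separate its carrier from anything. Consequently the $\Theta$-classes representing vertices of $\mathrm{star}(x)$ are pairwise in contact, and each is in contact with every other $\Theta$-class of $M$. Now let $C$ be a component of $X \backslash \mathrm{star}(x)$. None of its classes crosses $J$, so each lies in a halfspace of $J$. Two crossing classes both disjoint from $J$ lie in the same halfspace, since they share a square. By connectedness of $C$, all classes of $C$ therefore lie in a single halfspace $J^{\pm}$, and we can assign to each component a side.

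By pigeonhole, one halfspace, say $J^+$, contains at least $k/2$ of the components. Take all classes of $\mathrm{star}(x)$ together with one class from each component assigned to $J^+$. Two classes from components on the same side have carriers in the same halfspace (the carrier of a class not crossing $J$ lies in one halfspace), so $J$ does not separate them. Hence they are in contact by the first paragraph. Classes from $\mathrm{star}(x)$ are in contact with everything. So we obtain at least $|\mathrm{star}(x)| + k/2$ pairwise in contact $\Theta$-classes. Here local finiteness guarantees $|\mathrm{star}(x)|$ is finite, so the collection is a finite family and Corollary~\ref{cor:ContactDegree} applies, giving $\mathrm{mdeg}(X)=\deg(M) \geq |\mathrm{star}(x)| + \frac12 k$.

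The main point to check carefully is the first step: that Fact~\ref{fact:SepStar} really forces the separating class to be $J$. This uses that the star of the representing vertex genuinely separates the graph, which holds since it separates two vertices outside it. The rest is bookkeeping about which halfspace each carrier lies in.
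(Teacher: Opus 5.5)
Your proof is correct and follows the paper's argument. As in the paper, you fix a minimal-degree $M$ and use Fact~\ref{fact:SepStar} together with uniqueness to see that any non-contact pair must have its carriers separated by $J$. You then put at least half of the components' representatives in one halfspace $J^+$ and conclude with Corollary~\ref{cor:ContactDegree}. Stating the fact ``non-contact implies separated by $J$'' up front is slightly cleaner than the paper's case check. Assigning whole components to a side is harmless, though one representative per component would suffice.
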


\begin{proof}
Fix vertices $x_1, \ldots, x_k \in V(X)$, one in each connected component of $X \backslash \mathrm{star}(x)$. Let $M$ be a median graph of degree $\mathrm{mdeg}(X)$ whose crossing graph is isomorphic to $X$. To the vertices $x,x_1, \ldots, x_k$ in $X$ correspond $\Theta$-classes $J, J_1, \ldots, J_k$ in $M$; and, to the neigbours of $x$ in $X$ correspond $\Theta$-classes $K_1, \ldots, K_s$ in $M$. The $\Theta$-classes $J_1, \ldots, J_k$ are distinct from and do not cross $J$, since $x_1, \ldots, x_k$ do not belong to the star of $x$. Therefore, each $J_i$ is contained in one of the two halfspaces delimited by $J$. Up to reindexing our $\Theta$-classes, say that $J_1, \ldots, J_r$ are all contained in the same halfspace $J^+$ delimited by $J$ for some $r \geq k/2$. Our goal is to justify that $J,J_1, \ldots, J_r, K_1, \ldots, K_s$ are pairwise in contact. This will prove that $M$ contains a vertex of degree $\geq r+s+1$, hence
$$\mathrm{mdeg}(X) \geq \mathrm{deg}(M) \geq r+s+1 \geq |\mathrm{star}(x)| +k/2,$$
as desired.

\medskip \noindent
Assume that $H$ is a $\Theta$-class not in contact with $J_i$ for some $1 \leq i \leq r$. According to Fact~\ref{fact:SepStar}, the two vertices represented by $H$ and $J_i$ are separated by the star of some vertex representing a $\Theta$-class separating $H$ and $J_i$. By assumption, this star has to be $\mathrm{star}(x)$, so $J$ has to separate $H$ and $J_i$. As a consequence, $H$ must be distinct from $J_1, \ldots, J_r, J, K_1, \ldots, K_s$. Thus, we have proved that each $J_i$ is in contact with all of $J, J_1, \ldots, J_r, K_1, \ldots, K_s$. Then, we deduce from Fact~\ref{fact:SepStar} that $K_1, \ldots, K_s$ are pairwise in contact, since the vertices they represent cannot be separated by a star (which would have to be $\mathrm{star}(x)$). We also know that $J$ crosses, and a fortiori is in contact with, $K_1, \ldots, K_n$. We conclude, as desired, that the $\Theta$-classes $J, J_1, \ldots, J_r, K_1, \ldots, K_s$ are pairwise in contact. 
\end{proof}

\begin{figure}
\begin{center}
\includegraphics[width=0.7\linewidth]{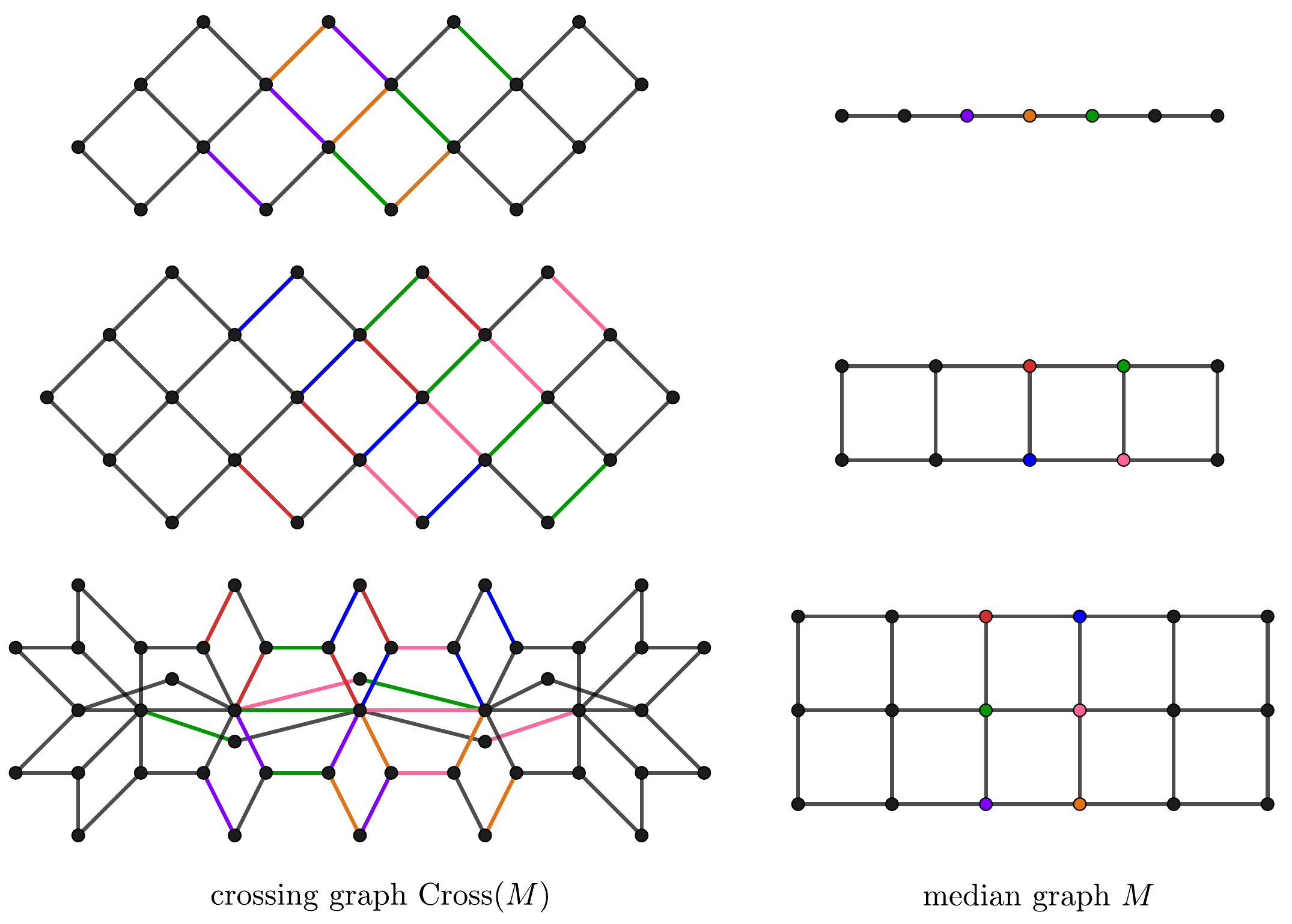}
\caption{Some median graphs with grids as crossing graphs.}
\label{GridCrossing}
\end{center}
\end{figure}
\begin{proof}[Proof of Proposition~\ref{prop:Grid}.]
If $m=n=0$, or $(m,n)=(0,1)$, or $(m,n)=(0,2)$, the value of the median degree of our grid follows from Proposition~\ref{prop:SmallMedianDegree}. If $m=0$ and $n \geq 3$, then our grid is a path $P_n$ of length $n$. We know from Proposition~\ref{prop:SmallMedianDegree} that its median degree is $\geq 4$, so it suffices to find a median graph of degree $4$ whose crossing graph is $P_n$. This is done by Figure~\ref{GridCrossing}. If $m=1$ and $n \geq 1$, then our grid is a ladder $K_2 \times P_n$ of length $n$. Again, we know from Proposition~\ref{prop:SmallMedianDegree} that its median degree is $\geq 4$, so it suffices to find a median graph of degree $4$ whose crossing graph is $P_n$, which is done by Figure~\ref{GridCrossing}. 

\medskip \noindent
Now, assume that $m=n=2$. Let $c_0$ denote the central vertex of our grid and $c_1, \ldots, c_4$ its four corners. Notice that $c_0$ is the only vertex whose star is separating; moreover, its star separates the grid into four connected components, namely $\{c_1\},\{c_2\},\{c_3\}, \{c_4\}$. Thus, Lemma~\ref{lem:UniqueStar} implies that our median degree is $\geq |\mathrm{star}(c_0)| + 4/2 = 5+ 2=7$. Figure~\ref{GridCrossing} provides a median graph of degree $7$ whose crossing graph is the $(2 \times 2)$-grid, so we conclude that the median degree of our grid is precisely $7$.

\medskip \noindent
\begin{minipage}{0.43\linewidth}
\includegraphics[width=0.95\linewidth]{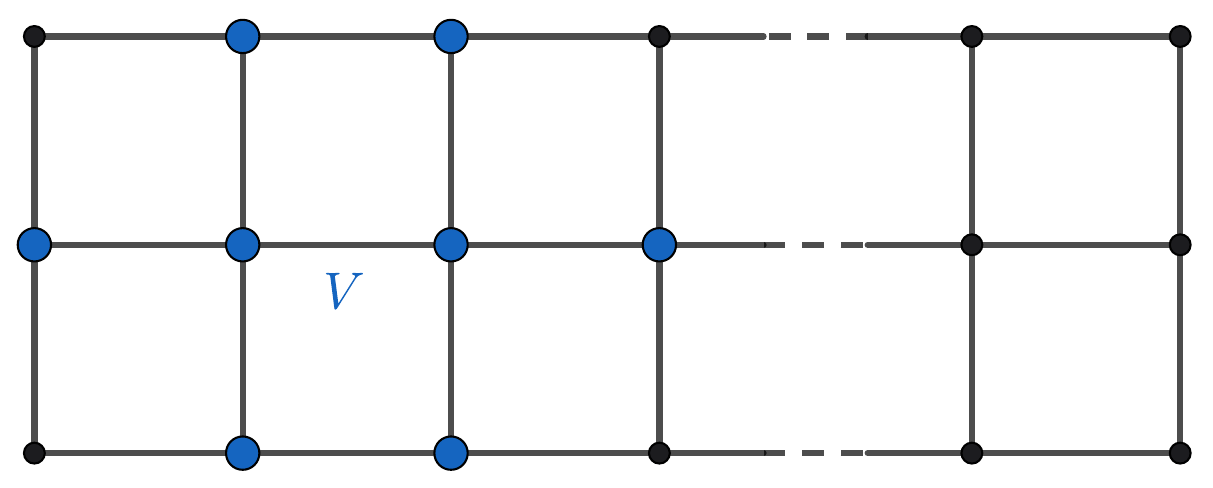}
\end{minipage}
\begin{minipage}{0.55\linewidth}
Next, assume that $m=2$ and $n \geq 3$. Let $V$ denote the set of vertices as illustrated by the figure on the left. Notice that no star of our grid separates two vertices of $V$, so Lemma~\ref{lem:NotStarSeparated} implies that the median degree of the grid is $\geq |V|=8$. Again, Figure~\ref{GridCrossing} provides a median graph of degree $8$ whose crossing graph is the $(n \times 2)$-grid, so we cocnlude that the median degree of our grid is precisely $8$.
\end{minipage}

\medskip \noindent
Finally, assume that $m,n \geq 3$. Let $C$ denote the graph obtained from our $(m \times n)$-grid by removing the four corners. Since no two vertices of $C$ are separated by a star in the full grid, Lemma~\ref{lem:NotStarSeparated} implies that the median degree we are looking for is $\geq mn-4$. It remains to construct a median graph of degree $mn-4$ with an $(m \times n)$-grid as its crossing graph. For this, we follow the following construction:
\begin{center}
\includegraphics[width=0.6\linewidth]{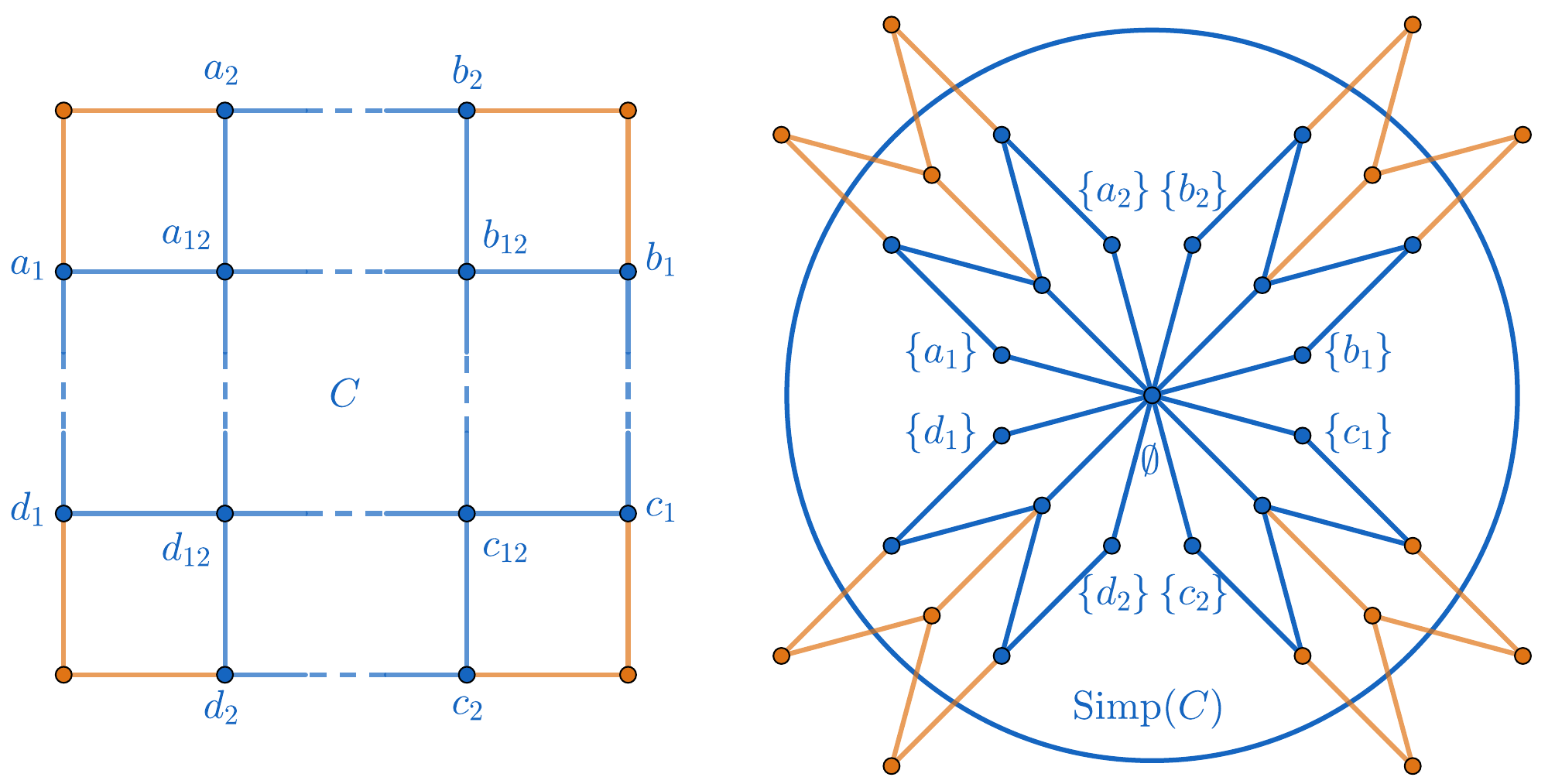}
\end{center}
More precisely, we start with the simplex-graph $\mathrm{Simp}(C)$. Its crossing graph is $C$ and it has degree $|C|=mn-4$, so it remains to add four new $\Theta$-classes corresponding to the four corners, but without increasing the degree. For this, we glue a $(2x1)$-ladder along the path $\{a_1,a_{12}\}, \{a_{12}\}, \{a_2,a_{12}\}$. This increases by one the degrees of $a_1$, $a_{12}$, and $a_2$, but we know that $\emptyset$ is the only vertex of maximal degree $mn-4$, so globally this does not modify the degree of our graph. Also, notice that the graph thus obtained is still median because we are amalgamated two median graphs along a convex path. Indeed, saying that the path $\{a_1,a_{12}\}, \{a_{12}\}, \{a_2,a_{12}\}$ is not convex amounts to saying that the edges $\{ \{a_{12}\}, \{a_1,a_{12}\}\}$ and $\{\{a_{12}\}, \{a_2,a_{12}\}\}$ span a square, or equivalently that $a_1$ and $a_2$ are two adjacent vertices in our grid, which is not the case. We perform a similar operation for $\{b_1,b_{12}\}, \{b_{12}\}, \{b_2,b_{12}\}$, for $\{c_1,c_{12}\}, \{c_{12}\}, \{c_2,c_{12}\}$, and for $\{d_1,d_{12}\}, \{d_{12}\}, \{d_2,d_{12}\}$. In the end, we get the desired the desired median graph. 
\end{proof}

\noindent
Notice that, as an immediate byproduct of Lemma~\ref{lem:NotStarSeparated}, we get the following estimate:

\begin{cor}\label{cor:CliqueGirth}
For every graph $X$,
$$\mathrm{mdeg}(X) \geq \mathrm{clique}(X), \mathrm{girth}(X).$$
As a consequence, for every $n \geq 1$, the complete graph $K_n$ with $n$ vertices and the cycle $C_n$ of length $n$ have median degree $n$.  
\end{cor}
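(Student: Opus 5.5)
The lower bound comes from Lemma~\ref{lem:NotStarSeparated}, once we check that a clique and a shortest cycle are never separated by a star of $X$.

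\emph{Cliques.} Let $Y$ be a clique of $X$. Any two vertices $y_1,y_2$ of $Y$ are adjacent, so the edge $\{y_1,y_2\}$ is a path joining them. Removing a star never disconnects $y_1$ from $y_2$ unless one of them lies in that star, and a star separating two vertices is understood not to contain them. Hence no star of $X$ separates two vertices of $Y$, and Lemma~\ref{lem:NotStarSeparated} gives $\mathrm{mdeg}(X) \geq |V(Y)|$. Taking $Y$ to be a maximum clique yields $\mathrm{mdeg}(X) \geq \mathrm{clique}(X)$.

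\emph{Cycles.} Let $C$ be a shortest cycle, of length $g=\mathrm{girth}(X)$. If $g=3$, then $C$ is a triangle, which is a clique, so the previous case applies. Assume $g \geq 4$ and let $u,v \in V(C)$ be two vertices not lying in $\mathrm{star}(z)$, for some $z \in V(X)$. The two arcs of $C$ from $u$ to $v$ are internally disjoint. If both arcs met $\mathrm{star}(z)$, we would get a closed walk through $z$ made of part of $C$ together with at most two spokes from $z$. The plan is to show that this contradicts the minimality of $C$. First, when $z \in V(C)$, the neighbours of $z$ on $C$ are its two cycle-neighbours. This holds because a chord of $C$, or a neighbour of $z$ on $C$ at distance at least $2$ along $C$, would create a shorter cycle; the same argument shows $z$ has at most two neighbours on $C$. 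So $\mathrm{star}(z) \cap C$ is a path of at most three consecutive vertices, which lies inside one arc, and the other arc avoids the star. When $z \notin V(C)$, the vertex $z$ has at most one neighbour on $C$ if $g \geq 5$, and at most two, at $C$-distance $2$, if $g=4$. In both cases $\mathrm{star}(z)\cap C$ is contained in three consecutive vertices of $C$, and the same conclusion holds. So some arc avoids the star, and $u,v$ are not separated. Lemma~\ref{lem:NotStarSeparated} then gives $\mathrm{mdeg}(X) \geq g$.

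The main obstacle is this case analysis. The key point to check is the claim that $\mathrm{star}(z) \cap V(C)$ always lies in at most three consecutive vertices of $C$, which follows from $C$ having no shortcuts.

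\emph{Consequences.} For $K_n$, Proposition~\ref{prop:MaxDegree} gives $\mathrm{mdeg}(K_n)\leq n$, and $\mathrm{clique}(K_n)=n$ gives the reverse inequality. For $C_n$ with $n \geq 3$, we have $\mathrm{girth}(C_n)=n$, and Proposition~\ref{prop:MaxDegree} again gives the upper bound $n$. The degenerate small cases, such as $n\leq 2$, are covered by Proposition~\ref{prop:SmallMedianDegree}.
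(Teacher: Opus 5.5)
Your route is the paper's: there the corollary is a one-line consequence of Lemma~\ref{lem:NotStarSeparated}, applied to a maximum clique and to a shortest cycle. Your clique argument is correct. So are the cycle cases $g=3$, $g\geq 5$ and $z\in V(C)$.

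The gap is the case $g=4$ with $z\notin V(C)$. You correctly find that $z$ may be adjacent to two opposite vertices $c_0,c_2$ of $C=c_0c_1c_2c_3$. Then $\mathrm{star}(z)\cap V(C)=\{c_0,c_2\}$ does lie in the three consecutive vertices $c_0,c_1,c_2$. But ``the same conclusion holds'' does not follow, because the middle vertex $c_1$ is not in the star. Take $u=c_1$ and $v=c_3$: both arcs $c_1c_2c_3$ and $c_1c_0c_3$ meet the star. Your earlier argument only works when $u,v$ lie outside the whole block of consecutive vertices containing the star's trace.

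This is not merely a write-up problem. In $K_{2,3}$ with parts $\{a,b\}$ and $\{p,q,r\}$, $\mathrm{star}(r)=\{r,a,b\}$ separates $p$ from $q$, so every $4$-cycle is separated by a star. In fact every subgraph not separated by a star has at most $3$ vertices. So Lemma~\ref{lem:NotStarSeparated} only yields $\mathrm{mdeg}(K_{2,3})\geq 3$, whereas Corollary~\ref{cor:BiComplete} gives $4$. The girth bound at $g=4$ is therefore not a consequence of Lemma~\ref{lem:NotStarSeparated} at all, and the paper's ``immediate byproduct'' justification passes over the same point.

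To close the gap, treat $g=4$ separately.
\begin{itemize}
\item If $X$ is connected, Proposition~\ref{prop:SmallMedianDegree} says the only connected graphs of median degree at most $3$ are $K_1$, $K_2$, $K_3$ and stars. None of these contains a $4$-cycle, so $\mathrm{mdeg}(X)\geq 4$.
\item If $X$ is disconnected, take a median graph $M$ realising $X$. Pass to the block $B$ of $M$ (a maximal connected subgraph without cut-vertex) containing the $\Theta$-classes of the component with the $4$-cycle. Blocks are convex, hence median. Each $\Theta$-class lies in a single block, so $\mathrm{Cross}(B)$ is exactly that component (it is connected by Proposition~\ref{prop:ConnectedCrossing}). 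Also $\mathrm{deg}(B)\leq \mathrm{deg}(M)$. The classification behind Proposition~\ref{prop:SmallMedianDegree} (Lemma~\ref{lem:MedianDegreeThree} plus the trivial cases of degree at most $2$) then applies to $B$.
\end{itemize}
Your consequences for $K_n$ and $C_n$ are unaffected. For $X=C_n$ every vertex $z$ lies on $C$, so your argument already covers $C_4$.
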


\noindent
Lemma~\ref{lem:UniqueStar} also has further applications, for instance:

\begin{cor}
If $Q_3^-$ denotes the graph obtained from the $3$-cube $Q_3$ by removing a vertex, then $\mathrm{mdeg}(Q_3^-)=6$. 
\end{cor}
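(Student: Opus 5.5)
The equality $\mathrm{mdeg}(Q_3^-)=6$ will follow by squeezing it between a lower bound from Lemma~\ref{lem:UniqueStar} and an upper bound from the characterisation of graphs with maximal median degree. No explicit median graph needs to be built.

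First I fix notation for $Q_3^-$. Let $v$ be the removed vertex of $Q_3$ and $w$ its antipode. Let $a,b,c$ be the three neighbours of $w$, and let $p,q,r$ be the three former neighbours of $v$, labelled so that $p$ is adjacent to $b,c$, $q$ to $a,c$, and $r$ to $a,b$. Then $Q_3^-$ has $7$ vertices, and $w,a,b,c$ have degree $3$ while $p,q,r$ have degree $2$.

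Next I check which stars are separating. We have $\mathrm{star}(w)=\{w,a,b,c\}$, and its complement is $\{p,q,r\}$. These are pairwise non-adjacent, since each of them is adjacent only to vertices among $a,b,c$. So $\mathrm{star}(w)$ separates, with exactly $3$ components.

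For $a$, the complement of $\mathrm{star}(a)=\{a,w,q,r\}$ is $\{b,c,p\}$, which is connected through $b-p-c$. The same holds for $b$ and $c$ by symmetry.

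For $p$, the complement of $\mathrm{star}(p)=\{p,b,c\}$ is $\{w,a,q,r\}$, which is connected through $a$ (adjacent to $w,q,r$). The same holds for $q$ and $r$ by symmetry.

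Hence $w$ is the unique vertex whose star separates. The graph is finite, hence locally finite, so Lemma~\ref{lem:UniqueStar} gives
$$\mathrm{mdeg}(Q_3^-)\geq |\mathrm{star}(w)|+\tfrac{3}{2}=4+\tfrac32 .$$
Since the median degree is an integer, this gives $\mathrm{mdeg}(Q_3^-)\geq 6$.

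For the upper bound, Proposition~\ref{prop:MaxDegree} gives $\mathrm{mdeg}(Q_3^-)\leq 7$. The graph has a separating star, namely $\mathrm{star}(w)$, so the theorem characterising graphs with maximal median degree rules out the equality $\mathrm{mdeg}=|V(X)|$. Concretely, its implication $(i)\Rightarrow(iii)$, proved by exhibiting a sliding of $\mathrm{Simp}(X)$ of degree $<|V(X)|$, shows $\mathrm{mdeg}(Q_3^-)<7$. Together with the lower bound, this gives $\mathrm{mdeg}(Q_3^-)=6$.

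The only delicate step is the exhaustive verification that no other star separates. The $S_3$-symmetry of $Q_3^-$ fixing $w$ reduces this to the two representative checks above, for $a$ and for $p$. If one wanted an explicit witness, the sliding constructed in the proof of that theorem, with $x=w$ and $B=\{p\}$, yields a median graph of degree at most $6$.
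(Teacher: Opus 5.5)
Your proof is correct. The lower bound is exactly the paper's argument: $w$ is the unique vertex with a separating star, $\mathrm{star}(w)$ leaves three components, and Lemma~\ref{lem:UniqueStar} plus integrality give $\mathrm{mdeg}(Q_3^-)\ge 6$.

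The upper bound is where you differ. The paper simply exhibits an explicit median graph of degree $6$ (Figure~\ref{CubeMinus}). You instead deduce $\mathrm{mdeg}(Q_3^-)\le 6$ from the characterisation of maximal median degree, since a separating star forces $\mathrm{mdeg}(X)<|V(X)|=7$.

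Your route avoids checking a picture, and it shows that a witness is a single sliding of $\mathrm{Simp}(Q_3^-)$. However, it can only ever give $|V(X)|-1$. It closes the gap here only because the bound from Lemma~\ref{lem:UniqueStar} happens to equal $|V(X)|-1$. When the gap is larger, as for the grids in the paper, an explicit construction is what is needed.

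One caution about the theorem you invoke. In its proof, the estimate $\deg_M(S)\le|\mathrm{star}(x)|$ for the glued vertices $S\ni x$ is too optimistic. In your case $\{w\}$ acquires the new neighbour $\{p\}$ and has degree $5>|\mathrm{star}(w)|=4$. The correct bound is $|\mathrm{star}(x)|+|V(B)|$, which is still $<|V(X)|$ because $A\neq\emptyset$, so the theorem survives.

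It is nonetheless cleanest to verify your witness directly. After the sliding with $x=w$, $B=\{p\}$, $A=\{q,r\}$, one gets $\deg(\emptyset)=6$ and $\deg(\{w\})=5$, and every other vertex has degree at most $4$.
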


\begin{proof}
Let $x_0$ denote the central vertex of $Q_3^-$ and $x_1,x_2,x_3$ the three vertices that do not belong to $\mathrm{star}(x_0)$. Notice that $x_0$ is the only vertex whose star is separation; moreover, its star separates the graph into three connected components. Thus, Lemma~\ref{lem:UniqueStar} implies that $\mathrm{mdeg}(Q_3^-) \geq |\mathrm{star}(x_0)| + 3/2$, hence $\mathrm{mdeg}(Q_3^-) \geq 6$. In order to prove the reverse inequality, notice that Figure~\ref{CubeMinus} provides a median graph of degree $6$ having $Q_3^-$ as its crossing graph.
\end{proof}
\begin{figure}
\begin{center}
\includegraphics[width=0.5\linewidth]{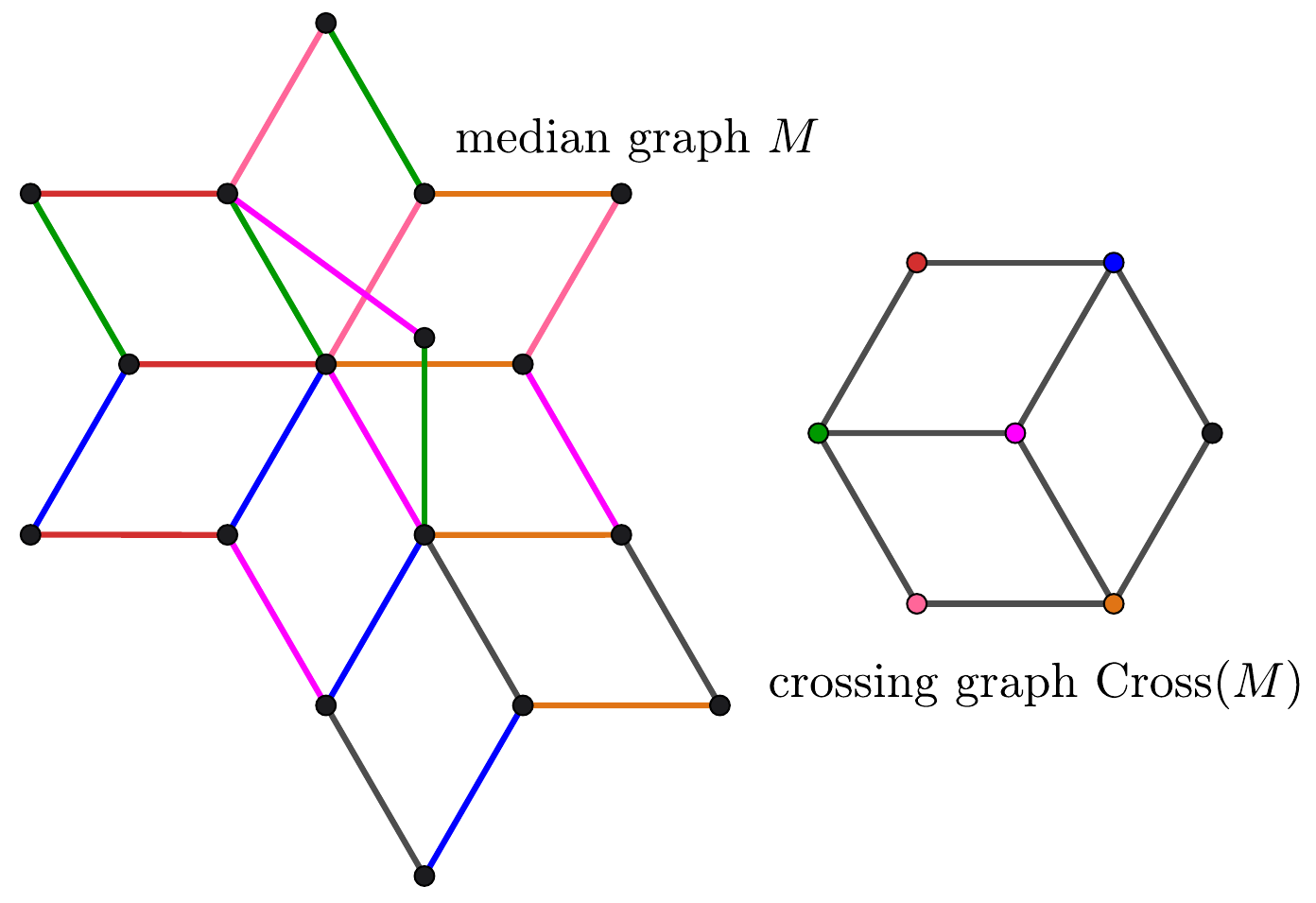}
\caption{A median graph of degree $6$ with $Q_3^-$ as its crossing graph.}
\label{CubeMinus}
\end{center}
\end{figure}

\noindent
It would be also interesting to understand how the median degree behaves under classical graph-theoretic products. A first example is given by \emph{joins}. Recall that the \emph{join} $A \ast B$ of two graphs $A$ and $B$ is the graph obtained from the distjoint union of $A$ and $B$ by adding an edge from every vertex of $A$ to every vertex of $B$. 

\begin{prop}\label{prop:Join}
For all finite graphs $X$ and $Y$,
$$\mathrm{mdeg}(X \ast Y) = \mathrm{mdeg}(X) + \mathrm{mdeg}(Y).$$
\end{prop}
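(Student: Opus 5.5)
The plan is to prove the two inequalities separately: the upper bound by taking a product, the lower bound by splitting the $\Theta$-classes of an optimal median graph into its $X$-part and its $Y$-part.

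\textbf{Upper bound.} Let $M_X$ and $M_Y$ be median graphs of degrees $\mathrm{mdeg}(X)$ and $\mathrm{mdeg}(Y)$ with crossing graphs $X$ and $Y$.
\begin{itemize}
\item The Cartesian product $M_X \times M_Y$ is median.
\item Its $\Theta$-classes are the products $J \times M_Y$ and $M_X \times H$.
\item Two classes $J\times M_Y$ and $J' \times M_Y$ cross if and only if $J,J'$ cross in $M_X$, and symmetrically for classes coming from $M_Y$.
\item Every class $J\times M_Y$ crosses every class $M_X\times H$, since an edge of $J$ times an edge of $H$ spans a square.
\end{itemize}
Hence $\mathrm{Cross}(M_X\times M_Y) \simeq X \ast Y$. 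Degrees add in products, so $\deg(M_X \times M_Y) = \mathrm{mdeg}(X)+\mathrm{mdeg}(Y)$. Therefore $\mathrm{mdeg}(X\ast Y)\le \mathrm{mdeg}(X)+\mathrm{mdeg}(Y)$.

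\textbf{Lower bound.} Let $M$ realise $X \ast Y$ with degree $\mathrm{mdeg}(X\ast Y)$. Denote by $\mathcal{J}_X$ (resp. $\mathcal{J}_Y$) the $\Theta$-classes corresponding to $X$ (resp. $Y$); every class of $\mathcal{J}_X$ crosses every class of $\mathcal{J}_Y$. The natural approach is to show that $M$ itself splits as a product $M_1 \times M_2$ with $\mathrm{Cross}(M_1)=X$ and $\mathrm{Cross}(M_2)=Y$. Then $\deg(M)=\deg(M_1)+\deg(M_2)\ge \mathrm{mdeg}(X)+\mathrm{mdeg}(Y)$.

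To get the splitting, I would proceed as follows.
\begin{itemize}
\item Let $M_1$ be the median graph obtained from $M$ by collapsing (contracting) all classes in $\mathcal{J}_Y$; it is the restriction quotient, whose vertices are the consistent orientations of the classes in $\mathcal{J}_X$.
\item Define $M_2$ similarly by collapsing the classes in $\mathcal{J}_X$. Restriction quotients of median graphs are median, and crossing is preserved among the remaining classes. So $\mathrm{Cross}(M_1)=X$ and $\mathrm{Cross}(M_2)=Y$.
\item Consider the natural map $M \to M_1 \times M_2$. It is injective, because two vertices separated by some class are separated in one of the factors.
\item It is surjective: given a consistent orientation $\sigma_1$ of $\mathcal{J}_X$ and $\sigma_2$ of $\mathcal{J}_Y$, their union is consistent. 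Each pair of chosen halfspaces from the two families intersects, since the corresponding classes cross. By the Helly property for halfspaces (finite case) the union defines a vertex of $M$.
\item Distances are sums of numbers of separating classes, which gives a graph isomorphism.
\end{itemize}

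I expect the main obstacle to be making the surjectivity/Helly step rigorous. Specifically, one must check that a pairwise intersecting finite family of halfspaces, coming from pairwise consistent choices, has nonempty intersection. This is the standard Helly property for convex subgraphs already invoked in Corollary~\ref{cor:ContactDegree}, applied to the halfspaces given by $\sigma_1\cup\sigma_2$. Pairwise intersection within $\sigma_1$ holds because $\sigma_1$ is realised by a vertex of $M_1$, hence by some vertex of $M$. Pairwise intersection across the two families holds by crossing.

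Alternatively, one can avoid quotients altogether for the lower bound. Take a vertex $v$ of $M$; its edges split by class into $d_X(v)+d_Y(v)$. One would show that some vertex $v$ has $d_X(v)\ge \mathrm{mdeg}(X)$ and $d_Y(v) \ge \mathrm{mdeg}(Y)$ simultaneously. This last simultaneity is exactly what the product decomposition provides, so I would commit to the product route.
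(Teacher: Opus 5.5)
Your proof is correct and follows the paper's route: the upper bound comes from the product $M_X \times M_Y$, and the lower bound comes from splitting an optimal median graph with crossing graph $X \ast Y$ as a product $M_1 \times M_2$ with $\mathrm{Cross}(M_1)=X$, $\mathrm{Cross}(M_2)=Y$, then using additivity of degrees. The only difference is that the paper simply cites this splitting (\cite[Theorem~9]{MR2299691}, see also \cite[Lemma~2.5]{MR2827012}), whereas you prove it directly via restriction quotients and the Helly property for halfspaces; this is a valid self-contained argument here, since $M$ has finitely many $\Theta$-classes and is therefore finite.
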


\begin{proof}
If $M$ (resp.\ $N$) is a median graph of degree $\mathrm{mdeg}(X)$ (resp.\ $\mathrm{mdeg}(Y)$), then the product $M \times N$ is a median graph of degree $\mathrm{mdeg}(X)+ \mathrm{mdeg}(Y)$ whose crossing graph is isomorphic to $X \ast Y$. Hence 
$$\mathrm{mdeg}(X \ast Y) \leq \mathrm{mdeg}(X) + \mathrm{mdeg}(Y).$$
In order to prove the reverse inequality, let $Q$ be a median graph of degree $\mathrm{mdeg}(X \ast Y)$ whose crossing graph is isomorphic to $X \ast Y$. According to \cite[Theorem~9]{MR2299691} (see also \cite[Lemma~2.5]{MR2827012}), $Q$ decomposes as a product $Q_1 \times Q_2$ such that the crossing graph of $Q_1$ (resp.\ $Q_2$) coincides with $X$ (resp.\ $Y$). Therefore, 
$$\mathrm{mdeg}(X \ast Y)= \mathrm{deg}(Q) = \mathrm{deg}(Q_1)+ \mathrm{deg}(Q_2) \geq \mathrm{mdeg}(X) + \mathrm{mdeg}(Y).$$
This concludes the proof of our proposition.
\end{proof}

\noindent
As an application, we can mention:

\begin{cor}\label{cor:BiComplete}
For all $m \geq n \geq 1$, if $K_{n,m}$ denotes the complete bipartite graph on $n+m$ vertices, then  
$$\mathrm{mdeg}(K_{n,m}) = \left\{ \begin{array}{cl} 2 & \text{if } m=n=1 \\ 3 & \text{if } m \geq 2 \text{ and } n=1 \\ 4 & \text{if } m,n \geq 2 \end{array} \right..$$
\end{cor}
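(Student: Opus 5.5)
We compute $\mathrm{mdeg}(K_{n,m})$ via the join formula, since $K_{n,m}$ is the join $\overline{K_n} \ast \overline{K_m}$ of two edgeless graphs (i.e.\ $n$ and $m$ isolated vertices). By Proposition~\ref{prop:Join},
$$\mathrm{mdeg}(K_{n,m}) = \mathrm{mdeg}(\overline{K_n}) + \mathrm{mdeg}(\overline{K_m}),$$
so it suffices to compute the median degree of the edgeless graph $\overline{K_k}$ on $k \geq 1$ vertices. The formula in Proposition~\ref{prop:Join} is stated for all finite graphs. The product argument and the cited decomposition result do not use connectedness of the factors, so it applies here even though $\overline{K_k}$ is disconnected.

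For $\overline{K_k}$, the median graphs with this crossing graph are exactly the trees with $k$ edges, as recalled in the introduction. Indeed, no two $\Theta$-classes cross, so there are no squares and the cubical dimension is $1$.

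When $k=1$ the only such tree is a single edge, so $\mathrm{mdeg}(\overline{K_1})=1$, which also agrees with Proposition~\ref{prop:SmallMedianDegree}. When $k \geq 2$, a tree with $k \geq 2$ edges has a vertex of degree $\geq 2$, and the path $P_k$ has degree exactly $2$. Hence $\mathrm{mdeg}(\overline{K_k})=2$.

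Adding up the three cases gives the result:
\begin{itemize}
	\item if $m=n=1$, then $\mathrm{mdeg}(K_{1,1})=1+1=2$;
	\item if $n=1$ and $m\geq 2$, then $\mathrm{mdeg}(K_{1,m})=1+2=3$;
	\item if $m,n\geq 2$, then $\mathrm{mdeg}(K_{n,m})=2+2=4$.
\end{itemize}
The first two cases are consistent with Proposition~\ref{prop:SmallMedianDegree}: $K_{1,1}$ is a single edge, and $K_{1,m}$ is a star. The upper bounds in the third case are realised concretely by products of paths, namely $P_n\times P_m$.

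The only delicate point is the one flagged above: checking that the cited product decomposition behind Proposition~\ref{prop:Join} (namely \cite[Theorem~9]{MR2299691}) holds when the crossing graph is a join of disconnected graphs. It does, because what matters is that every vertex of $X$ is adjacent to every vertex of $Y$. Connectivity inside $X$ or $Y$ plays no role.
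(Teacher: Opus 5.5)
Your proposal is correct and follows essentially the same route as the paper: you reduce via Proposition~\ref{prop:Join} to edgeless graphs, observe that median graphs with edgeless crossing graph are exactly trees, and take the path $P_k$ as the optimal tree. Your extra remark that the product decomposition behind Proposition~\ref{prop:Join} needs no connectedness of the factors is a fair check that the paper leaves implicit. It holds because the hyperplane-splitting result in \cite[Lemma~2.5]{MR2827012} only uses that every class of one family crosses every class of the other.
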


\begin{proof}
According to Proposition~\ref{prop:Join}, it suffices to verify that:

\begin{claim}
Given an $r \geq 1$, let $\overline{K_r}$ denote the graph that has $r$ vertices but no edges. Then
$$\mathrm{mdeg} \left( \overline{K_r} \right)= \left\{ \begin{array}{cl} 1 & \text{if } r=1 \\ 2 & \text{if } r \geq 2 \end{array} \right..$$
\end{claim}

\noindent
Since $\overline{K_r}$ does not have any edge, a median graph $M$ with $\overline{K_r}$ as its crossing graph does not have any two crossing $\Theta$-classes, which amounts to saying that $M$ is $\square$-free or equivalently that $M$ is a tree. Thus, the median degree of $\overline{K_r}$ coincides with the smallest degree of a tree with $r$ edges. This minimum is clearly acheived by a path $P_r$ of length $r$, whose degree is $1$ if $r=1$ and $2$ if $r \geq 2$. 
\end{proof}

\noindent
We conclude this section by mentioning the following estimate:

\begin{prop}\label{prop:AmalgamClique}
Let $A \ast_K B$ be a graph obtained by amalgamating two graphs $A$ and $B$ along a finite clique $K$. Then,
$$\mathrm{mdeg}(A \ast_K B) \leq \mathrm{mdeg}(A) + \mathrm{mdeg}(B) - |V(K)|.$$
\end{prop}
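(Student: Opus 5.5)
We take optimal median graphs $M_A$ and $M_B$ with $\mathrm{Cross}(M_A)\simeq A$ and $\mathrm{Cross}(M_B)\simeq B$, of degrees $\mathrm{mdeg}(A)$ and $\mathrm{mdeg}(B)$. We then glue them along a cube corresponding to $K$, so that the crossing graph of the amalgam is $A\ast_K B$ and degrees add, except that the $|V(K)|$ shared directions are counted only once.

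\emph{Step 1: locate the cube of $K$ in each factor.} The vertices of $K$ give $k:=|V(K)|$ pairwise crossing $\Theta$-classes $J_1,\dots,J_k$ of $M_A$. Pairwise crossing implies pairwise in contact. By Corollary~\ref{cor:ContactDegree}'s argument (Helly property for carriers), there is a vertex $x\in V(M_A)$ lying in all carriers. Lemma~\ref{lem:InterEdge} shows that the edges at $x$ in the $J_i$ pairwise span squares. The cube condition then gives a $k$-cube $Q_A\ni x$ crossed exactly by $J_1,\dots,J_k$. A cube is convex, since its $\Theta$-classes are distinct and geodesics crossing only these classes stay in it. We obtain $Q_B\subseteq M_B$ in the same way, and identify $Q_A$ with $Q_B$ by the isomorphism matching $\Theta$-classes according to $K$.

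\emph{Step 2: amalgamate.} Set $M:=M_A\ast_{Q_A=Q_B}M_B$, which is median because we glue along convex subgraphs. Its $\Theta$-classes are: classes of $M_A$ not meeting $Q_A$, classes of $M_B$ not meeting $Q_B$, and the $k$ classes that merge across the cube. Two classes cross in $M$ exactly when they cross in $M_A$ or in $M_B$. This holds because any square of $M$ lies in $M_A$ or in $M_B$: a square is a $4$-cycle, and since $Q_A$ separates the amalgam, a square meeting both sides would contain two vertices of the convex $Q_A$ at distance $2$ together with a midpoint outside $Q_A$, which is impossible. Theta-equivalence likewise splits, via square chains, so $\Theta$-classes of $M_A$ and $M_B$ merge only through the edges of the cube. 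Hence $\mathrm{Cross}(M)\simeq A\ast_K B$.

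\emph{Step 3: bound the degree, which is the main obstacle.} Vertices outside the cube keep their degree. A vertex $v$ of the cube has degree $\deg_{M_A}(v)+\deg_{M_B}(v)-k$, which is at most $\mathrm{mdeg}(A)+\mathrm{mdeg}(B)-k$. The inequality follows, since $\deg(M)$ is the supremum over vertices, and the degree of vertices off the cube is at most $\max(\mathrm{mdeg}(A),\mathrm{mdeg}(B))$, which is within the bound because $\mathrm{mdeg}(B)\ge k$ by Corollary~\ref{cor:CliqueGirth}. The delicate point to verify is the claim used in Step 2, that $\Theta$-classes of $M_A$ not meeting the cube do not become identified with classes of $M_B$. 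I would check this via the characterisation of $\Theta$ as the closure of square-opposition, together with the fact that every square lies on one side of the amalgam.
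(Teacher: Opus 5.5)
Your proposal is correct and follows the same route as the paper: take median graphs realising $\mathrm{mdeg}(A)$ and $\mathrm{mdeg}(B)$, find in each a $|V(K)|$-cube crossed exactly by the $\Theta$-classes coming from $K$, and amalgamate along these convex cubes. The differences are minor. You build the cube directly from the Helly property and the cube condition instead of citing an external theorem. You also verify the crossing graph and the degree bound $\deg \leq \mathrm{mdeg}(A)+\mathrm{mdeg}(B)-|V(K)|$ vertex by vertex, whereas the paper states the degree identity as an equality without checking it.
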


\begin{proof}
Let $M$ (resp.\ $N$) be a median graph of degree $\mathrm{mdeg}(A)$ (resp.\ $\mathrm{mdeg}(B)$) whose crossing graph coincides with $A$ (resp.\ $B$). The copy of $K$ in $A$ yields $|V(K)|$ $\Theta$-classes that pairwise cross. According to \cite[Theorem~4.14]{MR1347406} (see also \cite[Theorem~11]{MR1626579}), $M$ contains a cube $Q_A$ in which all these $\Theta$-classes cross. Similarly, $N$ contains a cube $Q_B$ in which the $|V(K)|$ $\Theta$-classes given by the copy of $K$ in $B$ pairwise cross. Because cubes in median graphs are convex, amalgamating $M$ and $N$ by identifying $Q_A$ and $Q_B$ yields a median graph $G$. Clearly, the crossing graph of $G$ coincides with $A \ast_K B$, hence
$$\mathrm{mdeg}(A \ast_K B) \leq \mathrm{deg}(G) = \mathrm{deg}(M) + \mathrm{deg}(N) - |V(K)| = \mathrm{mdeg}(A) + \mathrm{mdeg}(B) - |V(K)|,$$
which concludes the proof of our proposition. 
\end{proof}

\noindent
It is worth mentioning that, according to the next example, the inequality given by Proposition~\ref{prop:AmalgamClique} may not be an equality.

\begin{ex}
Consider the graph $C_5 \ast_{K_1} K_2$ obtained by gluing a cycle $C_5$ of length $5$ with a single edge $K_2$ along a single vertex $K_1$. Figure~\ref{AmalClique} yields a median graph $M$ of degree $5$ whose crossing graph is $C_5 \ast_{K_1} K_2$. Hence
$$\mathrm{mdeg}(C_5 \ast_{K_1} K_2) \leq 5 < 5+ 2 - 1 = \mathrm{mdeg}(C_5)+ \mathrm{mdeg}(K_2) - |V(K_1)|.$$
This shows that the inequality given by Proposition~\ref{prop:AmalgamClique} can be strict.
\end{ex}
\begin{figure}
\begin{center}
\includegraphics[width=0.8\linewidth]{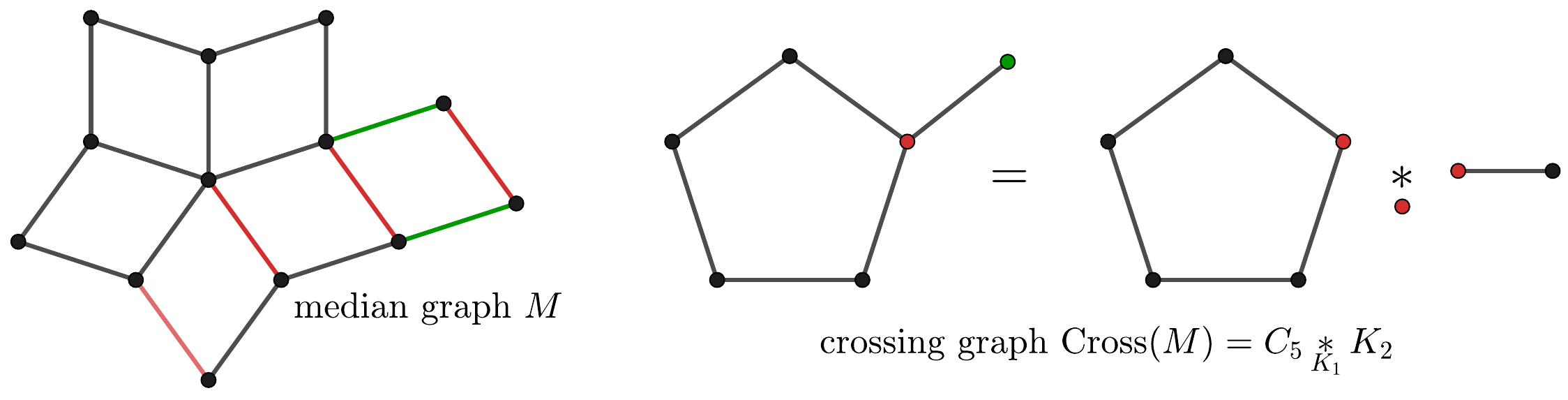}
\caption{An example showing that the inequality given by Proposition~\ref{prop:AmalgamClique} can be strict. }
\label{AmalClique}
\end{center}
\end{figure}

\noindent
As an application of Proposition~\ref{prop:AmalgamClique}, we record the following observation:

\begin{cor}
For every $n \geq 2$, if $K_n^-$ denotes the graph obtained from the complete graph $K_n$ on $n$ vertices by removing a single edge, then $\mathrm{mdeg}(K_n^-)= n.$
\end{cor}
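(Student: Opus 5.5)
The plan is to prove the two inequalities separately: $\mathrm{mdeg}(K_n^-)\leq n$ by invoking Proposition~\ref{prop:AmalgamClique}, and $\mathrm{mdeg}(K_n^-)\geq n$ by invoking Lemma~\ref{lem:NotStarSeparated}.

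For the upper bound, let $u,v$ be the two non-adjacent vertices of $K_n^-$. Then $K_n^-$ is the amalgam of two copies of $K_{n-1}$, namely the subgraphs induced by $V\setminus\{u\}$ and by $V\setminus\{v\}$, along the clique $K_{n-2}$ induced by the remaining $n-2$ vertices. For $n \geq 3$ this clique is non-empty. By Corollary~\ref{cor:CliqueGirth}, $\mathrm{mdeg}(K_{n-1})=n-1$. Proposition~\ref{prop:AmalgamClique} then gives
$$\mathrm{mdeg}(K_n^-)\leq (n-1)+(n-1)-(n-2)=n.$$
Concretely, this is two $(n-1)$-cubes glued along an $(n-2)$-subcube. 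That graph is $K_2 \times K_2 \times Q_{n-2}$ with one extra direction replaced by a path of length $2$, i.e.\ $P_2\times Q_{n-2}$, which has degree $n$. For $n=2$ the amalgam is along the empty clique, so it is better to handle this case directly: $K_2^-$ consists of two isolated vertices, and the Claim in the proof of Corollary~\ref{cor:BiComplete} gives median degree $2$. That settles $n=2$ immediately.

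For the lower bound, take $Y=K_n^-$ itself in Lemma~\ref{lem:NotStarSeparated}. We must check that no star separates two vertices of $K_n^-$. The only non-adjacent pair is $\{u,v\}$. Any star containing neither $u$ nor $v$ would be the star of some vertex $w$ other than $u$ and $v$. But such a $w$ is adjacent to everything, so $\mathrm{star}(w)$ is the whole graph and removing it leaves nothing to separate. The stars of $u$ and $v$ contain $u$ and $v$ respectively. The remaining adjacent pairs cannot be separated by removing a vertex set, in whatever sense the paper uses separation.

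The main subtlety is making sure the notion of ``separated by a star'' in Lemma~\ref{lem:NotStarSeparated} excludes the case where one of the two vertices lies in the star. I will read it, as in Fact~\ref{fact:SepStar}, as requiring both vertices to lie outside the star and in different components of the complement. Under that reading the lemma yields $\mathrm{mdeg}(K_n^-)\geq n$, and for $n=2$ this bound is $2$ as well.
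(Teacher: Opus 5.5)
Your proof is correct. The upper bound is the paper's argument: Proposition~\ref{prop:AmalgamClique} with Corollary~\ref{cor:CliqueGirth} for $n\ge 3$, and the path of length $2$ for $n=2$.

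The lower bound is where you genuinely differ. The paper only remarks that $K_n^-$ is the crossing graph of two $(n-1)$-cubes glued along an $(n-2)$-cube. On its own, exhibiting one median graph with crossing graph $K_n^-$ bounds $\mathrm{mdeg}$ from above, not from below; it implicitly needs that this is the only element of $\mathrm{Cross}^{-1}(K_n^-)$. You instead apply Lemma~\ref{lem:NotStarSeparated} to $Y=K_n^-$, which actually proves $\mathrm{mdeg}(K_n^-)\ge n$.

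Your reading of ``separated by a star'' is the right one. In the proof of that lemma, the $\Theta$-class given by Corollary~\ref{cor:SeparatingCarriers} separates the two carriers, so it neither equals nor crosses either class. Fact~\ref{fact:SepStar} therefore produces a star avoiding both vertices, and your check that no star of $K_n^-$ avoids both $u$ and $v$ closes the argument.

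The same check shows that $K_n^-$ has no separating star at all. So the paper's characterisation of graphs with maximal median degree gives the statement in one line. Also, the upper bound needs no amalgamation: $\mathrm{mdeg}(K_n^-)\le |V(K_n^-)|=n$ by Proposition~\ref{prop:MaxDegree}.

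Similarly, your explicit model $P_2\times Q_{n-2}$ has crossing graph $\overline{K_2}\ast K_{n-2}=K_n^-$ and degree $n$ for every $n\ge 2$. This makes the separate treatment of $n=2$ unnecessary. One slip there: ``$K_2\times K_2\times Q_{n-2}$ with one direction replaced'' should read ``$K_2\times Q_{n-2}$ with its $K_2$ factor replaced by $P_2$''.
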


\begin{proof}
If $n=2$, then $K_n^-$ is a pair of isolated vertices, so it is clear that $\mathrm{mdeg}(K_n^-)=2$ (the only median graph having $K_2^-$ as its crossing graph being a path of length $2$). If $n \geq 3$, then $K_n^-$ can be described as two $(n-1)$-cliques amalgamated along an $(n-2)$-clique. Proposition~\ref{prop:AmalgamClique}, combined with Corollary~\ref{cor:CliqueGirth}, then shows that
$$\mathrm{mdeg}(K_n^-) \leq \mathrm{mdeg}(K_{n-1}) + \mathrm{mdeg}(K_{n-1}) - |V(K_{n-2})| = 2(n-1) - (n-2) = n.$$
On the other hand, by noticing that $K_n^-$ is the crossing graph of two $(n-1)$-cubes amalgamated along an $(n-2)$-cube, we know that $\mathrm{mdeg}(K_n^-) \geq n$. The desired equality follows. 
\end{proof}

\section{Questions}\label{section:Questions}

\noindent
In this article, we have introduced, motivated, and initiated the study of median degrees of graphs. But many natural yet challenging problems remain to be investigated. A question we find particularly intriguing is:

\begin{question}\label{question:trees}
What is the median degree of a tree? 
\end{question}

\noindent
It seems that median degrees of trees can be arbitrarily large, but much smaller than the number of vertices. For instance, we saw in Proposition~\ref{prop:Grid} that the median degree of a path is always $\leq 3$. In fact, the median degree of a tree is always bounded above by its diameter plus $3$. Here is a sketch of proof; see Figure~\ref{Tree} for an example of the construction which we describe now. Let $T$ be a finite tree. Fix a $2$-vertex-colouring of $T$ and, for each vertex $v \in V(T)$, a \emph{local ordering}, i.e.\ a total ordering on the edges having $v$ as an endpoint. Draw the auxiliary tree $T'$ defined as follows: the vertices of $T'$ are the midpoints of the edges of $T$; for every vertex $v \in V(T)$, draw an edge between the midpoints of two edges having $v$ as an endpoint whenevery they appear successively along the order given by $v$. Notice that the $2$-vertex-colouring of $T$ induces a $2$-edge-colouring of $T'$. Say that the edges of $T'$ are coloured as green and red. Draw $T'$ on the plane such that every red edge is horizontal and every green edge is vertical, and such that left-to-right and up-to-down orderings agree with the local orderings. Naturally, $T'$ is the dual graph of a planar square complex whose $1$-skeleton yields a median graph $M$ having $T$ as its crossing graph. Observe that the degree of $M$ is bounded above by $2$ plus the length of the longest alternating red-green path in $T'$, hence $\mathrm{mdeg}(T) \leq \mathrm{deg}(M) \leq \mathrm{diam}(T')+2 \leq \mathrm{diam}(T)+3$. 
\begin{figure}
\begin{center}
\includegraphics[width=0.8\linewidth]{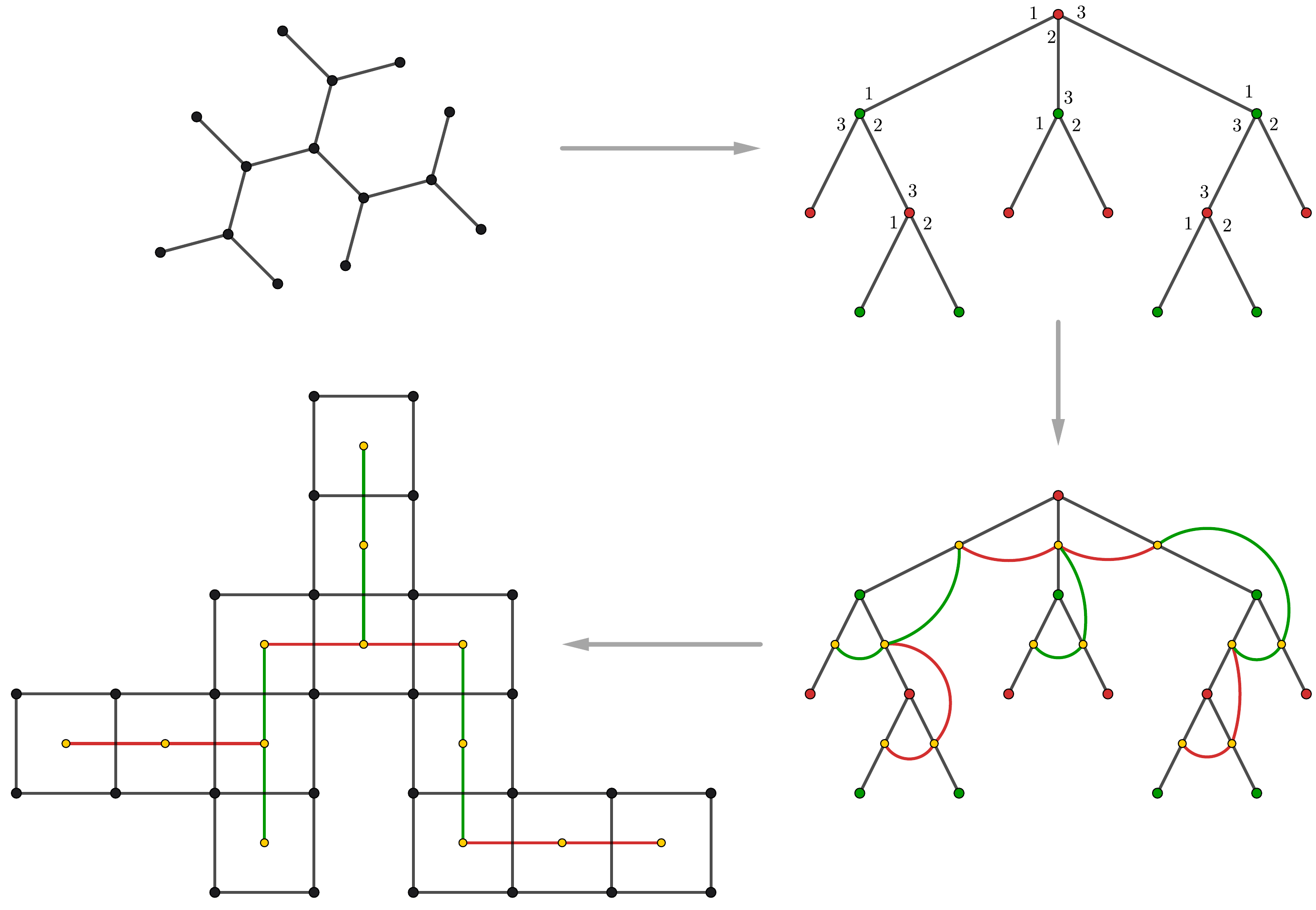}
\caption{Constructing a median graph with a given tree as its crossing graph.}
\label{Tree}
\end{center}
\end{figure}

\medskip \noindent
In the same direction as Question~\ref{question:trees}, we can ask:

\begin{question}
What is the median degree of a cactus graph? a block graph? a chordal graph?
\end{question}

\noindent
In another direction, a probably more delicate problem is:

\begin{question}
What is the median degree of a planar graph?
\end{question}

\noindent
As already mentioned in Section~\ref{prop:Grid}, it is natural to investigate how the median degree behaves under graph-theoretic operations. Propositions~\ref{prop:Join} and~\ref{prop:AmalgamClique} above deal with joins and amalgamations along cliques. We can also ask:

\begin{question}
What is the median degree of a $\square$-product? of a $\boxtimes$-product?
\end{question}

\noindent
Instead of looking for an explicit formula giving the median degree of a graph, which may not even exist for large families of graphs, it sounds reasonable to look for efficient algorithms that compute the median degrees of specific graphs, which leads to the following natural question:

\begin{question}
How efficient can we be in computing the median degree of a graph?
\end{question}

\noindent
In this article, we focused our interest, given a graph $X$, in the minimal degree of a graph in $\mathrm{Cross}^{-1}(X)$. But many other numerical invariants would be natural to investigate. For instance:

\begin{question}
Let $X$ be a graph. How many median graphs have crossing graph $X$?
\end{question}

\addcontentsline{toc}{section}{References}

\bibliographystyle{alpha}
{\footnotesize\bibliography{MedianDegree}}

\Address

%

\end{document}